\begin{document}
\setcounter{tocdepth}{1}

% define theorem environments
\newtheorem{theorem}{Theorem}    %[section]
\newtheorem{proposition}[theorem]{Proposition}
\newtheorem{corollary}[theorem]{Corollary}
\newtheorem{lemma}[theorem]{Lemma}
\newtheorem{sublemma}[theorem]{Sublemma}
\newtheorem{conjecture}[theorem]{Conjecture}
\newtheorem{fact}[theorem]{Fact}
\newtheorem{observation}[theorem]{Observation}

\newtheorem{definition}{Definition}
\newtheorem{notation}[definition]{Notation}
\newtheorem{remark}[definition]{Remark}
\newtheorem{question}[definition]{Question}
\newtheorem{questions}[definition]{Questions}
\newtheorem{hypothesis}[definition]{Hypothesis}

\newtheorem{example}[definition]{Example}
\newtheorem{problem}[definition]{Problem}
\newtheorem{exercise}[definition]{Exercise}

 \numberwithin{theorem}{section}
% \numberwithin{definition}{section}
 \numberwithin{equation}{section}

\def\repair{\medskip\hrule\hrule\medskip}

\def\bff{\mathbf f}
\def\bE{\mathbf E}
\def\bF{\mathbf F}
\def\bK{\mathbf K}
\def\bP{\mathbf P}
\def\bx{\mathbf x}
\def\bt{\mathbf t}
\def\bc{\mathbf c}
\def\ba{\mathbf a}
\def\bw{\mathbf w}
\def\bh{\mathbf h}
\def\bk{\mathbf k}
\def\bn{\mathbf n}
\def\bg{\mathbf g}
\def\bc{\mathbf c}
\def\bs{\mathbf s}
\def\bp{\mathbf p}
\def\by{\mathbf y}
\def\bv{\mathbf v}
\def\be{\mathbf e}
\def\bu{\mathbf u}
\def\bm{\mathbf m}
\def\bxi{{\mathbf \xi}}
\def\bR{\mathbf R}
\def\by{\mathbf y}
\def\bz{\mathbf z}
\def\bfb{\mathbf b}
\def\bbS{\mathbb S}

\def\bPhi{\mathbf \Phi}

\newcommand{\norm}[1]{ \|  #1 \|}

\def\scriptl{{\mathcal L}}
\def\scriptc{{\mathcal C}}
\def\scriptd{{\mathcal D}}
\def\scrapd{{\mathcal D}}
\def\scripts{{\mathcal S}}
\def\scriptq{{\mathcal Q}}
\def\scriptt{{\mathcal T}}
\def\scriptf{{\mathcal F}}
\def\scriptm{{\mathcal M}}
\def\scripti{{\mathcal I}}
\def\scriptr{{\mathcal R}}
\def\scriptb{{\mathcal B}}
\def\scripte{{\mathcal E}}
\def\scripta{{\mathcal A}}
\def\scriptn{{\mathcal N}}
\def\scriptv{{\mathcal V}}
\def\scriptz{{\mathcal Z}}
\def\scriptj{{\mathcal J}}

\def\bk{\mathbf k}
\def\kernel{\operatorname{kernel}}
\def\eps{\varepsilon}

\def\reals{\mathbb R}
\def\naturals{\mathbb N}
\def\integers{\mathbb Z}
\def\rationals{\mathbb Q}
\def\one{\mathbf 1}
\def\complex{{\mathbb C}\/}

\def\lt{{L^2}}

 \def\four{\underline{4}}
 \def\three{\underline{3}}

\def\bart{\bar t}
\def\barz{\bar z}
\def\barx{\bar x}
\def\bary{\bar y}
\def\bars{\bar s}

\def\distance{\operatorname{distance}}
\def\md{{\mathcal D}}
\def\Pphi{{\mathbf \Phi}}
\def\Ppsi{{\mathbf \Psi}}

\title[Implicitly oscillatory quadrilinear integrals]
{On Implicitly Oscillatory Quadrilinear Integrals}

 \author{Michael Christ}

\address{
        Michael Christ\\
        Department of Mathematics\\
        University of California \\
        Berkeley, CA 94720-3840, USA}
\email{mchrist@berkeley.edu}

\date{February 9, 2022.}
% {November 27, 2021.} 
%	May 22, 2021 

% \begin{abstract}
% \end{abstract}

\thanks{Research supported by NSF grant
DMS-1901413}

\maketitle

%\begin{center}{\fbox{\bf Draft of work in progress}} \end{center}
%\bigskip

\section{Introduction}

\subsection{Formulation of the main result}
Let $B$ be a nonempty open ball of finite radius in $\reals^2$,
and let $\tilde B$ be a connected open neighborhood of $B$.
For $j\in\{1,2,3,4\}$, let $\varphi_j:\tilde B\to\reals$ be a $C^\omega$ submersion.
Write 
$\three = \{1,2,3\}$, $\four = \{1,2,3,4\}$, and 
$\Pphi = (\varphi_j: j\in\four)$.

Consider quadrilinear functionals
\begin{equation}
\scriptt(\bff) = \int_{\reals^2} \prod_{j\in\four} (f_j\circ\varphi_j)(x)\,\eta(x) \,dx,
\end{equation}
acting on $\bff = (f_j: j\in\four)$, with each
$f_j:\reals^1\to\complex$ Lebesgue measurable and belonging to an appropriate function space.
Throughout the discussion, $\eta\in C^\infty(\reals^2)$ is a smooth cutoff function supported in $B$.

We impose the following hypothesis, which is a necessary condition for the conclusion that we seek.
\begin{hypothesis}[Main hypothesis] \label{mainhypothesis} 
Let $\Omega\subset B$ be open and connected. Let
$(F_j: j\in\four)$ be a tuple of $C^\omega$ functions
$F_j:\varphi_j(\Omega)\to\reals$. 
If
%\begin{equation} \label{degeneracy}
$\sum_{j=1}^4 F_j\circ\varphi_j\equiv 0$  in $\Omega$
%\end{equation}
then each $F_j$ is constant.
\end{hypothesis}

% \footnote{That is, $F_j|_{\varphi_j(\Omega)}$ is constant.}

For $\sigma\in\reals$ and $p\in(1,\infty)$,
denote by $W^{p,\sigma}$ the Sobolev space of functions
having $\sigma$ derivatives in  $L^p(\reals)$.
Our main result is the following theorem, whose other hypotheses are formulated below.

\begin{theorem} \label{maintheorem}
Let $\Pphi$ satisfy the main hypothesis \ref{mainhypothesis},
the transversality hypothesis \ref{transversehyp}, 
and the auxiliary hypotheses~\ref{auxhyp1}, \ref{auxhyp2}, and \ref{auxhyp3}.
For any $p>2$ and any $\eta\in C^\infty_0(B)$,
there exist $\sigma<0$ and $C<\infty$ such that
for every four-tuple $\bff = (f_1,\dots,f_4)$ of functions in $L^p(\reals^1)$,
\begin{equation} \label{maininequality}
|\scriptt(\bff)| \le C \prod_{j\in\four} \norm{f_j}_{W^{p,\sigma}}.
% \ \forall\, f_j\in L^{p}(\reals^1).
\end{equation}
\end{theorem}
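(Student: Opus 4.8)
The plan is to prove the estimate for Littlewood--Paley pieces of the $f_j$ with a gain that decays geometrically in the frequency, and then to sum; the gain is the implicit oscillation forced by the nonresonance hypothesis~\ref{mainhypothesis}, extracted quantitatively with the help of the auxiliary hypotheses.

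\textbf{Reduction to one frequency scale.}
For $p\ge 2$ one has the trivial bound $|\scriptt(\bff)|\le C\prod_{j\in\four}\norm{f_j}_{L^p}$: pairing $(f_1\circ\varphi_1)(f_2\circ\varphi_2)$ against $(f_3\circ\varphi_3)(f_4\circ\varphi_4)$, applying Cauchy--Schwarz in $x$, and changing variables by the local diffeomorphisms $(\varphi_1,\varphi_2)$ and $(\varphi_3,\varphi_4)$ (legitimate by the transversality hypothesis~\ref{transversehyp}) gives $C\prod_j\norm{f_j}_{L^2(\varphi_j(\operatorname{supp}\eta))}$, and each $\varphi_j(\operatorname{supp}\eta)$ is a bounded interval. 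Now decompose $f_j=\sum_k\Delta_kf_j$. If the frequencies $2^{k_j}$ are not all comparable, say $k_{j_0}$ exceeds every other $k_j$ by more than a small multiple $\eps k_{j_0}$, then writing the $j_0$th factor through its Fourier integral and integrating by parts repeatedly in $x$ along $\nabla\varphi_{j_0}\ne 0$ (submersion hypothesis) --- after foliating by the level sets of $\varphi_{j_0}$ and bounding the amplitude derivatives of the other factors by Bernstein's inequality --- gains an arbitrarily large negative power of the ratio of scales, and these terms sum to an acceptable total. The remaining contributions, for which all four $k_j$ agree up to $\eps k$, reduce after a harmless further decomposition to the single-scale estimate
\[
  |\scriptt(\bff)|\ \le\ C\,\lambda^{-\theta}\prod_{j\in\four}\norm{f_j}_{L^p},\qquad \lambda=2^k,
\]
with $\theta>0$ independent of $\lambda$, for $f_j$ with Fourier support in $\{|\xi|\sim\lambda\}$; feeding this into the Littlewood--Paley sum and taking $\sigma<0$ with $|\sigma|$ a small fixed fraction of $\theta$ yields \eqref{maininequality}. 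The hypothesis $p>2$ enters exactly here, to leave room for this summation and for the Bernstein losses; at $p=2$ the method returns only $\sigma=0$.

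\textbf{Collapse to a parametrized family of one-dimensional trilinear forms.}
For the single-scale estimate, take $u=(\varphi_1,\varphi_2)$ as coordinates on a small sub-ball; then
\[
  \scriptt(\bff)=\int_{\reals^2}f_1(u_1)\,f_2(u_2)\,f_3(a(u))\,f_4(b(u))\,\tilde\eta(u)\,du,
\]
with $a=\varphi_3\circ(\varphi_1,\varphi_2)^{-1}$ and $b=\varphi_4\circ(\varphi_1,\varphi_2)^{-1}$ submersions, pairwise transverse and transverse to both axes. Disintegrating in $u_1$, $\scriptt(\bff)=\int f_1(w)\,H(w)\,dw$ with $H(w)=\int_\reals f_2(s)\,f_3\!\big(a(w,s)\big)\,f_4\!\big(b(w,s)\big)\,\tilde\eta(w,s)\,ds$, a trilinear form in $(f_2,f_3,f_4)$ on $\reals^1$ depending on the parameter $w$; so $|\scriptt(\bff)|\le\norm{f_1}_{L^p}\,\norm{H}_{L^{p'}_w}$ and it remains to bound $\norm{H}_{L^{p'}_w}$. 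For fixed $w$, $H(w)$ is a trilinear form of the type $\int g_1(s)\,g_2(\alpha_w(s))\,g_3(\beta_w(s))\,\omega_w(s)\,ds$, and such forms decay like $\lambda^{-\theta(w)}$ on frequency-$\lambda$ inputs as soon as the one-dimensional configuration $\{s,\alpha_w(s),\beta_w(s)\}$ is nondegenerate (nonvanishing of the relevant Wronskian-type curvatures) --- a statement available by time--frequency methods, or by the one-dimensional sublevel-set estimates used below. This is the structural reason the quadrilinear problem in $\reals^2$ is harder than the trilinear problem in $\reals^1$: one faces a whole family of such forms.

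\textbf{The main obstacle, and the role of the hypotheses.}
The difficulty is that $\theta(w)$ degenerates as $w$ approaches an exceptional set, which in $\reals^2$ is generically positive-dimensional; one needs a bound on $\norm{H}_{L^{p'}_w}$ that is robust across it. This reduces to a uniform quantitative sublevel-set inequality
\[
  \big|\{\,x\in B:\ |\textstyle\sum_{j\in\four}F_j\circ\varphi_j(x)-L(x)|<\delta\,\}\big|\ \le\ C\,\delta^{c}
\]
over all affine $L$ and all $(F_j)$ in the compact families produced by the wave-packet analysis, for some $c>0$. One proves it by contradiction and compactness, using real-analyticity of the $\varphi_j$: a failure along $\lambda_n\to\infty$ would, after affine renormalization and passage to a convergent subsequence, produce a nontrivial identity $\sum_{j\in\four}F_j\circ\varphi_j\equiv\text{const}$ on an open connected $\Omega\subset B$, contradicting Hypothesis~\ref{mainhypothesis}. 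The auxiliary hypotheses~\ref{auxhyp1}--\ref{auxhyp3} are what make the contradiction quantitative rather than merely qualitative: they bound orders of vanishing, control the transverse geometry of the level-set foliations, and fix the relevant curvatures, so that one extracts a genuine rate $\delta^{c}$. I expect essentially all the difficulty of Theorem~\ref{maintheorem} to lie in this step, together with the bookkeeping of the wave packets and of the parameter regions in which the various curvatures vanish; once the robust sublevel bound is in hand, interpolating it against the trivial $L^2$ estimate produces the single-scale decay $\lambda^{-\theta}$, and the assembly of the first paragraph finishes the proof.
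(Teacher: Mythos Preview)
Your outline identifies the correct high-level structure --- reduction to a single frequency scale, then a sublevel-set mechanism --- but two of your three main steps have genuine gaps, and the central technical device of the paper is absent.

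\textbf{The H\"older step discards essential information.} Writing $\scriptt(\bff) = \int f_1(w) H(w)\,dw$ and applying H\"older immediately throws away the oscillation of $f_1$. You are then committed to proving $\norm{H}_{L^{p'}_w} \lesssim \lambda^{-\theta}\prod_{j\ge 2}\norm{f_j}_{L^p}$, a \emph{parametrized one-dimensional trilinear} bound with uniform decay. But the one-dimensional nondegeneracy of the slice configuration $(s,\alpha_w(s),\beta_w(s))$ is not implied by the two-dimensional Hypothesis~\ref{mainhypothesis}; and even when it holds, the rate $\theta(w)$ need not be bounded below. You note this, but your proposed remedy --- a 2D sublevel bound for $\sum_{j\in\four} F_j\circ\varphi_j$ --- is disconnected from the 1D slice problem you have reduced to. The paper proceeds differently: instead of H\"older in one factor, it applies Cauchy--Schwarz in the $x_2$ direction (after arranging $\varphi_4\equiv x_1$), which \emph{duplicates} the remaining three factors and produces the multiplicative differences $\scriptd_s f_j(y)=f_j(y+s)\overline{f_j(y)}$. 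The point is that $\scriptd_s f$ carries exploitable structure (see below) that $f$ alone does not.

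\textbf{The compactness argument is too soft.} Your sublevel bound ``by contradiction and compactness, using real-analyticity'' is essentially the microlocal-defect-measure mechanism of Joly--M\'etivier--Rauch, which (as the introduction notes) yields weak continuity but \emph{no} quantitative rate. The auxiliary hypotheses do not by themselves upgrade a soft contradiction to a power $\delta^c$; one needs a constructive argument. The paper imports its key sublevel bound from the companion paper (Theorem~\ref{thm:sublevel} here), and that proof is a quantitative iteration, not compactness.

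\textbf{The missing ingredient.} What your outline lacks entirely is the $\flat/\sharp$ decomposition (Lemma~\ref{lemma:sharpflat}): every $f$ splits as $f_\sharp+f_\flat$, where $f_\sharp$ is a short sum of modulated slowly-varying pieces and $f_\flat$ has the property that $\scriptd_s f_\flat$ has small low-frequency Fourier content for most $s$. The paper applies this locally on $\lambda^{-\gamma}$--intervals, handles the all-$\sharp$ contribution by reducing (via stationary phase) to a three-term variable-coefficient sublevel inequality (Proposition~\ref{prop:sharp} and Theorem~\ref{thm:sublevel}), and handles any contribution containing a $\flat$ factor by the Cauchy--Schwarz/$\scriptd_s$ reduction above, which leads to a separate sublevel problem in $\reals^3$ and to an analysis of an exact functional equation (\S\ref{section:functionalequation}--\S\ref{section:conclusionofproof}). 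The auxiliary hypotheses enter at specific, identifiable points of this argument --- e.g.\ Hypothesis~\ref{auxhyp2} for transversality of certain lifted foliations in $\reals^3$, Hypothesis~\ref{auxhyp1} to exclude the homogeneity exponent $\sigma=1$ in Lemma~\ref{lemma:sigma}, Hypothesis~\ref{auxhyp3} as input to Theorem~\ref{thm:sublevel} --- not as generic ``quantifiers'' of a compactness contradiction.
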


A special case of the corresponding result with two factors $f_j$
is a restatement of this well-known fact: Let $t\mapsto\gamma(t)$
be a real analytic mapping from $\reals^1$ to $\reals^2$.
Let $\eta\in C^\infty(\reals)$ be compactly supported.
Define a measure $\mu$ in $\reals^2$ by $\int g\,d\mu = \int_\reals g(\gamma(t))\,\eta(t)\,dt$.
If the range of $\gamma$ is not contained in any affine subspace of $\reals^2$
then there exists $\delta>0$ such that the Fourier transform of $\mu$
satisfies $|\mu(\xi)| = O(|\xi|^{-\delta})$ as $|\xi|\to\infty$.

An immediate application is to weak continuity of 
mappings $\bff\mapsto \prod_{j\in\four} (f_j\circ\varphi_j)$.
Joly, M\'etivier, and Rauch \cite{JMR} proved weak continuity
% proved an analogue of Corollary~\ref{cor:weakconvergence}
with this application in mind,
for threefold products, with weaker auxiliary hypotheses all around. 
Their proof was based on microlocal defect
measures and an argument by contradiction, and yielded no quantitative upper bound
corresponding to \eqref{maininequality}. 
For threefold products, an inequality in terms of negative order Sobolev norms 
was proved by Bourgain \cite{bourgain_nonlinear_roth} in a particular case,
and later by the author \cite{triosc} in a relatively general $C^\omega$ case. 
More recently, Evans \cite{evans} has developed another proof for threefold products.
A more streamlined version of the analysis in \cite{triosc}, with
certain supplementary hypotheses relaxed, is presented in \cite{christzhou}.

\begin{comment}
For any open set $S\subset\reals^2$,
we write ``$g_\nu\rightharpoonup g$ in $S$'' to mean that
$\int g_\nu\,\zeta\to \int g\,\zeta$ as $\nu\to\infty$,
for every $C^\infty$ function $\zeta$ supported in $S$.

\begin{corollary} \label{cor:weakconvergence}
Let $\Pphi$ satisfy the hypotheses of Theorem~\ref{maintheorem}.
Let $p>2$. Let $(\bff_\nu: \nu\in\naturals)$
be a sequence of $4$-tuples of Lebesgue measurable functions.
Suppose that $\norm{f_{\nu,j}}_{L^p}$ is bounded,
uniformly in $\nu,j$. Suppose that for each $j\in\four$, 
$f_{\nu,j}\rightharpoonup f_j$ in $\varphi_j(B)$ as $\nu\to\infty$. Then
\begin{equation} \label{corconclusion}
\prod_{j\in\four} (f_{\nu,j}\circ\varphi_j)
\rightharpoonup \prod_{j\in\four} (f_{j}\circ\varphi_j)
\ \text{ in $B$.}
\end{equation}
\end{corollary}
\end{comment}

\begin{comment}
\begin{corollary}
Let $\Pphi$ satisfy the hypotheses of Theorem~\ref{maintheorem}
in a bounded ball $B\subset\reals^2$.
For each $p\in(2,\infty)$ there exists $\sigma<0$ such that
the mapping $\bg\mapsto \prod_{j\in\four} (g_j\circ\varphi_j)$
is weakly continuous from $[W^{p,\sigma}(\reals)]^4$
to $\scripte'(B)$. 
\end{corollary}

That is,
\begin{equation}
\int \prod_{j\in\four} (f_{\nu,j}\circ\varphi_j)\,\eta
\to \int \prod_{j\in\four} (f_j\circ\varphi_j)\,\eta
\end{equation}
for each $\eta\in C^\infty$ supported in $B$.
\end{comment}

Fourfold products, with integration over two-dimensional domains,
are more singular and apparently require significantly more intricate analysis.
One indication of this is the failure of the inequality to hold for $p=2$
with four factors, while it extends to all $p>3/2$ for three factors,
leading to a useful gain in certain Fourier coefficient-based estimates
--- the same phenomenon that arises in Roth's theorem concerning
arithmetic progressions of length three \cite{taovu}.

\subsection{Hypotheses, and comments on them}
Hypothesis \ref{mainhypothesis} is necessary 
for the conclusion \eqref{maininequality} to hold. 
Indeed, suppose $\sum_j (F_j\circ\varphi_j)\equiv 0$ in some open set.
For large $\lambda\in\reals^+$, define $f_j = \eta_j e^{i\lambda F_j}$ 
for suitable $C^\infty_0$ cutoff functions $\eta_j$. 
Then $\prod_j (f_j\circ\varphi_j)$ is independent of $\lambda$,
and the cutoff functions can be chosen so that its integral does not vanish.
On the other hand, if some real-valued function $F_k$ 
is nonconstant, and if $\eta_k$ is chosen to be supported in an open set
in which $\nabla F_k$ vanishes nowhere, 
then for any strictly negative parameter $\sigma$, $\norm{\eta_k e^{i\lambda F_k}}_{W^{p,\sigma}}$
tends to $0$ as $\lambda\to\infty$. Thus \eqref{maininequality} cannot hold.

Throughout the paper, we impose a transversality hypothesis on $\Pphi$.
\begin{hypothesis}[Transversality] \label{transversehyp}
For each pair of distinct indices $j\ne k\in\four$,
$\nabla\varphi_j,\nabla\varphi_k$ are linearly independent at each point of $B$.
\end{hypothesis}

This transversality hypothesis guarantees that the integral defining $\scriptt(\bff)$
converges absolutely when each $f_j\in\lt$, and that 
% there is a majorization
$|\scriptt(\bff)| \le C\prod_{j=1}^4 \norm{f_j}_{\lt}$.
In this inequality,
$L^2$ cannot be replaced by $L^p$ for any $p<2$.
This can be seen by choosing any point $x$, setting $f_j$
equal to the indicator function of an interval of length $\eps$
centered at $\varphi_j(x)$, and evaluating both sides of
the inequality as $\eps\to 0$.
%The purpose of this paper is to establish an improvement of another type, under appropriate hypotheses. 

We will impose three auxiliary hypotheses, each of which holds for generic data $\Pphi$.
For each $j\in\four$, let $V_j$ be a nowhere vanishing $C^\omega$
vector field in $\tilde B$ that satisfies $V_j(\varphi_j)\equiv 0$.

\begin{hypothesis} \label{auxhyp1}
For any $k\in\four$,
on any connected open set $\omega\subset \tilde B \subset \reals^2$ with nonempty interior,
any $C^\omega$ solution $(F_j: j\ne k)$ of the equation
\begin{equation} \label{eq:auxhyp1}
\sum_{j\ne k} (F_j\circ\varphi_j)\, V_k\varphi_j\, \nabla\varphi_j \equiv 0
\end{equation}
in $\omega$ satisfies $F_j\circ\varphi_j \equiv 0$ in $\omega$ for each $j\ne k$.
\end{hypothesis}

Equation \eqref{eq:auxhyp1} is a system of two scalar equations
in three scalar-valued unknown functions $F_j$.

\begin{hypothesis} \label{auxhyp2}
Endow $\reals^3 = \reals^2\times\reals^1$ with coordinates
$(x,s)$. For each $k\in\four$ and each $j\ne k$, define 
\begin{equation} \Phi_{j,k} = (\varphi_j(x),sV_k\varphi_j(x)).\end{equation}
Define $W_{j,k}$ to be nowhere vanishing $C^\omega$ vector
fields in $\tilde B \times\reals^1$ that satisfy $W_{j,k}(\Phi_{j,k})\equiv 0$.
Then for each $k\in\four$, the three-tuple of vector fields $\{W_{j,k}: j\ne k\}$
is linearly independent at each point of $\tilde B\times (\reals\setminus\{0\})$. 
\end{hypothesis}

If $i,j,k\in\four$ are pairwise distinct then since 
$\nabla_x\varphi_i,\nabla\varphi_j$ are linearly independent
at each point of $\tilde B$,
then $\nabla_{x,s}\Phi_{i,k}$ and $\nabla_{x,s}\Phi_{j,k}$
are likewise linearly independent at each point of $\tilde B \times \reals$.

\begin{hypothesis} \label{auxhyp3}
For each permutation $(i,j,k,l)$ of $(1,2,3,4)$, for each $\tau\in\reals$, the function
\begin{equation}
\frac{V_l\varphi_i \cdot |V_k\varphi|^\tau}
{V_l\varphi_j \cdot |V_k\varphi_j|^\tau}
\end{equation}
cannot be expressed as a ratio $\frac{h_i\circ\varphi_i}{h_j\circ\varphi_j}$,
with $h_i,h_j$ real analytic, in any nonempty open subset of $\reals^2$.
\end{hypothesis}

% In \S\ref{section:mainhypothesis} we show 
It can be shown 
that generic tuples $\Pphi$
do satisfy the main hypothesis \ref{mainhypothesis},
and more generally,  generic $n$-tuples $\Pphi = (\varphi_1,\dots,\varphi_n)$ of mappings 
satisfy the analogous condition for arbitrarily large $n$.
%In \S\ref{section:auxhypotheses} we show 
More precisely, there exist $N<\infty$ and a nonempty Zariski open subset $\Omega$ of
the Euclidean space of all $N$-jets of $n$-tuples of functions at $0\in\reals^2$
such that for any real analytic $\Pphi$ whose $N$-jet at $0$ belongs to $\Omega$,
there exists a ball $B$ centered at $0$ in which hypothesis \ref{mainhypothesis} is satisfied.
Likewise, all of the auxiliary hypotheses also hold for generic $\Pphi$.

The main hypothesis implies a weak form of transversality:
for any $j\ne k$, $V_k(\varphi_j)$ does not vanish identically.

The hypothesis of real analyticity of $\Pphi$ represents a compromise that simplifies
both the formulation of Theorem~\ref{maintheorem}, and its proof. As is shown in \cite{sublevel4}
and in \cite{christzhou},
it implies a formally stronger quantitative formulation:
There exist $C,N$ such that for all $\bff\in C^N$ and all $x\in B$,
\begin{equation} \label{relaxmainhyp}
\sum_{j=1}^4 \sum_{0<n\le N} \big| \partial^n(f_j\circ\varphi_j)(x)\big|
\le C \sum_{0<|\alpha|\le N} \big|\,\partial^\alpha \sum_{k=1}^4 (f_k\circ\varphi_k)(x)\, \big|.
\end{equation}
Here $\partial^n = d^n/dy^n$ and $\partial^\alpha = \partial^\alpha/\partial x^\alpha$.
The author believes that for any $N$, 
Theorem~\ref{maintheorem} extends to $C^{N+1}$ data $\Pphi$ satisfying \eqref{relaxmainhyp},
with the transversality and auxiliary hypotheses significantly relaxed.
For real analytic $\Pphi$, we believe that the transversality hypothesis
can be relaxed to the assumption that no two $\nabla\varphi_j$
are everywhere linearly dependent, and that the three auxiliary hypotheses can be omitted entirely.
It is shown for threefold products in \cite{christzhou}
and for a related additive problem in \cite{sublevel4} 
how results with relaxed auxiliary hypotheses can be obtained as consequences
of results with strong auxiliary hypotheses, in closely related contexts.

%To relax or eliminate them would be desirable, in order to treat certain natural examples.
%For instance, consider $\varphi_j(x,y) = x+c_j y^{j-1}$ for $j=1,2,3,4$
%with each $c_j\ne 0$,
%in some ball in $\reals\times(\reals\setminus\{0\})$.

% Then \eqref{transversehyp} need not hold, but its relaxation does.
%In the $C^\omega$ case, the conclusion holds in the opposite
%extreme situation in which $V_i(\varphi_j)\equiv 0$,
%since then $(f_i\circ\varphi_i)\cdot(f_j\circ\varphi_j)$
%can be rewritten as $\tilde f_j\circ\varphi_j$,
%reducing matters to the known trilinear case.
%The theorem is expected to extend to the case in which
%$V_i(\varphi_j)$ vanishes at some points, without vanishing identically.

\medskip
Key ingredients in the proof of Theorem~\ref{maintheorem} were developed 
in \cite{triosc}, \cite{CDR}, and \cite{sublevel4} in that order,
with their eventual application to Theorem~\ref{maintheorem} envisioned.

The author is indebted to Terence Tao and Craig Evans for calling to his attention
the works of Bourgain \cite{bourgain_nonlinear_roth}
and of Joly, M\'etivier, and Rauch \cite{JMR}, respectively.

\section{Parameters}

Certain positive parameters are to be chosen in the course of the proof.
In order to avoid any appearance of circularity, we specify here the order in which these
are to be chosen, which is not the order in which they will arise in the exposition.
$\gamma\in (\tfrac12,1)$ is to be chosen first, and is an arbitrary element of that
interval. Each of the other parameters is
chosen to be sufficiently small relative to parameters chosen earlier, and relative to
other quantities that are determined, at least implicitly, by parameters chosen earlier.
The next parameter chosen is $\tau_0$,
then $\rho_0,\delta_0,\delta_1,\delta_3,\delta_4,\delta_1^*,\delta_2^*,\delta_3^*,\delta_4^*$,
and finally $\rho$, in that order.
Thus at every step, we may assume that $\rho$ is as small as may be desired,
relative to all other positive parameters that arise.
$\delta_2$ and $\delta_5$ are not chosen, but are determined by other parameters.

\section{Decompositions}

\subsection{Fourier decomposition}
Under the transversality hypothesis, one has
for any permutation $(i,j,k,l)$ of $(1,2,3,4)$
\begin{equation}
|\scriptt(\bff)| \le C \norm{f_i}_{L^1}\norm{f_j}_{L^1}
\norm{f_k}_{L^\infty}\norm{f_l}_{L^\infty}.
\end{equation}
Therefore by interpolation,
\begin{equation}
|\scriptt(\bff)| \le C \prod_{j\in\four} \norm{f_j}_{L^2}.
\end{equation}
Therefore Theorem~\ref{maintheorem} is an immediate consequence,
by decomposition and interpolation, of the following proposition.

\begin{proposition} \label{mainprop}
Let $\Pphi$ satisfy the hypotheses of Theorem~\ref{maintheorem}.
There exists $\delta>0$ such that 
for any $\eta\in C^\infty_0(\reals^2)$ supported in $B$,
there exists $C<\infty$ with the following property.
Let $\lambda\ge 1$. Let $f_j\in L^\infty(\reals)$. 
Assume that $\widehat{f_j(\xi)}$ is supported where $|\xi| \le 2\lambda$
for each $j\in\four$,
and that $\widehat{f_1(\xi)}$ is supported where $\tfrac12\lambda \le |\xi| \le 2\lambda$.  Then
\begin{equation} \label{ineq:mainprop}
|\scriptt(\bff)| \le C \lambda^{-\delta} \prod_{j\in\four} \norm{f_j}_{L^\infty}.
\end{equation}
\end{proposition}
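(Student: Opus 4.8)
The plan is to run a $TT^*$-type iteration on pairs of the factors $f_j$, converting the absence of oscillation into decay in $\lambda$. Fix the hypothesis that $\widehat{f_1}$ is supported in the annulus $\tfrac12\lambda\le|\xi|\le 2\lambda$; this is the factor whose genuine oscillation at scale $\lambda$ we will exploit. I would begin by freezing $f_2,f_3,f_4$ and viewing $\scriptt(\bff)$ as a linear functional of $f_1$; expanding $|\scriptt(\bff)|^2$ and using the frequency localization of $f_1$ produces an integral over pairs of points $x,x'$ with $\varphi_1(x)$ close to $\varphi_1(x')$, i.e. over pairs lying on a common level curve of $\varphi_1$ up to an error of size $\lambda^{-1}$. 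Parametrizing the near-diagonal by the flow of the vector field $V_1$ (which satisfies $V_1\varphi_1\equiv 0$), one obtains a new quadrilinear-type expression in the remaining three functions $f_2,f_3,f_4$, but now with an extra oscillatory factor $e^{i\lambda\psi}$ whose phase $\psi$ involves a difference $\sum_{j\ne 1}(F_j\circ\varphi_j)(x)-(F_j\circ\varphi_j)(x')$ and, to first order in the displacement parameter $s$ along $V_1$, the expression $\sum_{j\ne 1}(F_j\circ\varphi_j)\,V_1\varphi_j\,\nabla\varphi_j$ appearing in Hypothesis~\ref{auxhyp1}. The content of Hypothesis~\ref{auxhyp1} is exactly that this leading coefficient cannot vanish identically for nonzero data, so the phase is genuinely oscillatory in $x$ at scale $\lambda^{1-\epsilon}$ away from a small exceptional set.

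Next I would iterate this procedure, peeling off one factor at each stage. After the first step we have a trilinear object with a large parameter and auxiliary phase; the second application of $TT^*$ (now in, say, $f_2$) uses the transversality of $\nabla\varphi_2$ against the others together with the lifted maps $\Phi_{j,k}=(\varphi_j(x),sV_k\varphi_j(x))$ of Hypothesis~\ref{auxhyp2}, whose linear independence guarantees that the geometry does not degenerate when we adjoin the displacement variable $s$ as an auxiliary coordinate in $\reals^3$. Continuing, after two further steps one is left with a one-dimensional oscillatory integral in a single variable with a phase built from ratios of the form $\dfrac{V_l\varphi_i\cdot|V_k\varphi|^\tau}{V_l\varphi_j\cdot|V_k\varphi_j|^\tau}$; Hypothesis~\ref{auxhyp3} says precisely that such a phase is not of the degenerate product form $h_i\circ\varphi_i/h_j\circ\varphi_j$, hence van der Corput's lemma (or a sublevel-set estimate, using the real-analyticity of $\Pphi$ to control the order of vanishing uniformly) yields a power-of-$\lambda$ gain at the bottom of the induction. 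Tracking the losses, each $TT^*$ step costs a factor $\lambda^{C\epsilon}$ from the various cutoffs to non-degenerate regions and from the uncertainty-principle truncations at scale $\lambda^{-1}$, while the final stationary-phase estimate supplies a fixed positive power $\lambda^{-\delta_0}$; choosing $\epsilon$ small relative to $\delta_0$ closes the argument with some $\delta>0$. The real-analyticity hypothesis is what makes the ``away from a small exceptional set'' language harmless: the bad set where any of the non-degeneracy functions vanishes is a real-analytic variety of positive codimension, so it can be excised at the cost of an arbitrarily small power of $\lambda$, and its complement decomposes into finitely many pieces on which all the relevant quantities are bounded below.

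The main obstacle, and the place where the bulk of the work lies, is controlling the \emph{degenerate region} at each stage of the iteration — the neighborhood of the set where the leading phase coefficient (from Hypothesis~\ref{auxhyp1}, then \ref{auxhyp2}, then \ref{auxhyp3}) is small. One cannot simply discard it, because its measure is only polynomially small in the relevant scale, not exponentially; instead one must re-expand $\scriptt$ on that region and show that the \emph{next} term in the Taylor expansion of the phase in the displacement parameter is itself non-degenerate there, which is why the hypotheses are stated as genuine real-analytic non-vanishing conditions on entire ratios and linear combinations rather than on a single leading coefficient. Making this bootstrap uniform — so that finitely many re-expansions suffice and the accumulated $\lambda^{C\epsilon}$ losses stay bounded — requires the quantitative sublevel-set machinery for real-analytic functions (as developed in the cited works \cite{triosc}, \cite{CDR}, \cite{sublevel4}) and the careful ordering of the small parameters $\tau_0,\rho_0,\delta_0,\dots,\rho$ specified in Section~2. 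A secondary technical point is that the two-dimensional domain of integration makes the $p=2$ endpoint genuinely false, so one must be content with $p>2$ (equivalently, with the $L^\infty$ normalization and a power-of-$\lambda$ gain in Proposition~\ref{mainprop}); there is no room for an $\epsilon$-free argument, and the proof must be arranged so that every step tolerates $\lambda^{C\epsilon}$ slack.
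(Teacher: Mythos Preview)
Your outline has the right high-level architecture --- Cauchy--Schwarz iteration, non-degeneracy hypotheses controlling a phase, sublevel-set machinery at the bottom --- but it has a genuine gap at the very first step, and this gap is where the real content of the paper's argument lies.

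You write that after expanding $|\scriptt(\bff)|^2$ and using the frequency localization of $f_1$, ``one obtains a new quadrilinear-type expression in the remaining three functions $f_2,f_3,f_4$, but now with an extra oscillatory factor $e^{i\lambda\psi}$''. This is not what happens. The hypothesis $\widehat{f_1}$ supported at $|\xi|\sim\lambda$ does \emph{not} let you extract a concrete phase $e^{i\lambda\psi(x)}$ from $f_1$; it only says $f_1$ oscillates at that scale in some unspecified way. After Cauchy--Schwarz you are left with products $\scriptd_s f_j(x)=f_j(x+s)\overline{f_j(x)}$ of the original arbitrary $L^\infty$ functions --- still arbitrary, still not of the form $e^{i\lambda F_j}$ --- and there is no clean phase to which Hypothesis~\ref{auxhyp1} applies. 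Relatedly, the $F_j$ in that hypothesis are hypothetical $C^\omega$ solutions of a functional equation, not the data functions $f_j$; your outline never explains how one passes from one to the other.

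The paper's mechanism for crossing that bridge is the local $\flat/\sharp$ decomposition of Lemma~\ref{lemma:sharpflat}: each $f_j$ is split so that $f_{j,\sharp}$ is a short sum of genuine local exponential monomials $h\,e^{i\alpha x}$ (for which one really does get an oscillatory integral with phase $\sum_j\alpha_{j,m_j}\varphi_j$, and the sublevel-set bound of Theorem~\ref{thm:sublevel} applies), while $f_{j,\flat}$ has the property that $\scriptd_s f_{j,\flat}$ has small low-frequency mass for most $s$. The $\flat$ contribution is then handled by a single Cauchy--Schwarz (eliminating $f_4$, not $f_1$) followed by another local Fourier expansion, which produces frequency functions $k_j(m,s)$ whose stationarity condition $\sum_j k_j\nabla\varphi_j=O(\lambda^\rho)$ defines the sublevel set one actually needs to bound; it is to these frequency functions, not to the original $f_j$, that the functional-equation analysis of \S\ref{section:functionalequation}--\S\ref{section:conclusionofproof} and Hypotheses~\ref{auxhyp1}--\ref{auxhyp3} are applied. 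Without the $\flat/\sharp$ step, the iteration you sketch does not produce a phase to which van der Corput or the auxiliary hypotheses can be applied, and the argument does not close.
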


Throughout the analysis, we assume without loss of generality that $\lambda$ is large.
Thus if $c_1<c_2$ are constants then $\lambda^{c_1}$ is small relative to $\lambda^{c_2}$.

\subsection{$\flat/\sharp$ decomposition}
A basic strategy is to apply the Cauchy-Schwarz inequality to eliminate
one of the four functions $f_j$. 
This transforms the remaining functions $f_k$ to associated functions $\scriptd_sf_k$, as follows.

\begin{definition}
For $x,s\in\reals$ and $f:\reals\to\complex$, 
\begin{equation} \scriptd_s f(x) = f(x+s)\overline{f(x)}.\end{equation}
\end{definition}

If $f$ is highly oscillatory, then $\scriptd_s f$ need not be so;
if for instance $f(x)=e^{i\lambda x}$ then $\scriptd_s f$
is a constant function of $x$ for each $s$.
On the other hand, if $f(x)=e^{i\lambda x^2}$ with $\lambda\in\reals$ and $|\lambda|$ large,
then $\scriptd_s f(x)$ takes the form $c(s)e^{i2s\lambda x}$ with $|c(s)|\equiv 1$.
The next lemma is based on a simple characterization of those
functions for which $\scriptd_s f$ is not highly oscillatory.

\begin{lemma} \cite{CDR} \label{lemma:sharpflat}
Let $\delta>0$ and $R\ge 1$.
For any $f\in\lt(\reals)$ there exists a decomposition $f = f_\sharp + f_\flat$
satisfying $\norm{f_\sharp}_\lt+\norm{f_\flat}_\lt \lesssim  \norm{f}_\lt$,
with the following supplementary properties.

The summand $f_\sharp$ admits a decomposition 
\begin{equation}
f_\sharp(x) = \sum_{n=1}^{M} h_n(x) e^{i \alpha_{n}x}
\end{equation}
with each  $\alpha_{n}\in\reals$, 
%satisfying \begin{equation} |\alpha_{n}| = O(\lambda), \end{equation}
and with each $h_n$ a smooth function satisfying 
\begin{equation} \left\{
\begin{gathered}
\|\partial^N h_n\|_{\infty}
\le C_N R^{N} \|f\|_\infty\ \forall\, N\ge 0,
\\
\norm{h_{n}}_{L^2} \lesssim \norm{f}_{\lt}, 
\\
\widehat{h_n} \text{ is supported in $[-R,R]$,}
\\
{M}\lesssim R^{\delta}. 
\end{gathered} \right. \end{equation}
Moreover, 
the support of $\widehat{f_\sharp}$ is contained in 
a $CR$--neighborhood of the support of $\widehat{f}$.
Finally, if $f\in L^\infty$ then
\begin{equation} \label{extraLinftybound} \norm{f_\sharp}_\infty 
\lesssim M^{1/2} \norm{f}_\infty. \end{equation}

The summand $f_\flat$ satisfies
%The function $f_\flat^s(x) = f_\flat(x+s)\overline{f_\flat(x)}$ 
%satisfies 
\begin{equation} \label{gflat}
\int_{\reals} \int_{|\xi| \le R}
% ?? corrected 6/11/2021: I had written \lambda_0^\tau, rather than R, which doesn't make sense} 
|\widehat{\md_s f_\flat}(\xi)|^2\,d\xi\,ds
\lesssim R^{-\delta}\norm{f}_{\lt}^4.
\end{equation}
% \norm{g}_{L^\infty}^4).
All implicit constants are independent of $R,f$.
% ?? draft said also independent of tau, delta. That seems suspicious.
% It's also irrelevant.
\end{lemma}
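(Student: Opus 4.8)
The plan is to pass to the Fourier side and reduce the whole assertion to a single concentration estimate for $\widehat f$ at scale $R$. The first thing I would record is the identity that makes this reduction work: expanding $\scriptd_s g(x)=g(x+s)\overline{g(x)}$ in the $x$--frequency variable and then applying Plancherel in $s$ gives, for every $g\in\lt(\reals)$, that $\int_{\reals}|\widehat{\scriptd_s g}(\xi)|^{2}\,ds = c\int_{\reals}|\widehat g(\zeta)|^{2}\,|\widehat g(\zeta+\xi)|^{2}\,d\zeta$ with $c$ an absolute constant, hence
\[
\int_{\reals}\!\int_{|\xi|\le R}|\widehat{\scriptd_s g}(\xi)|^{2}\,d\xi\,ds \;=\; c\int_{\reals}|\widehat g(\zeta)|^{2}\Big(\int_{\zeta-R}^{\zeta+R}|\widehat g(u)|^{2}\,du\Big)\,d\zeta .
\]
Thus \eqref{gflat} will hold once $f_\flat$ is arranged so that $|\widehat{f_\flat}|^{2}$ puts mass $\lesssim R^{-\delta}\norm{f}_{\lt}^{2}$ into every interval of length $2R$, and $f_\sharp$ should absorb the few frequency windows that carry more than this.

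Concretely, I would fix once and for all a nonnegative $\psi\in C^\infty_0(\reals)$ with $\mathrm{supp}\,\psi\subset[-1,1]$ and $\sum_{n\in\integers}\psi(t-n)\equiv 1$, put $\alpha_n=nR$ and $\psi_n(\xi)=\psi(R^{-1}\xi-n)$, and define $g_n$ by $\widehat{g_n}=\psi_n\widehat f$. Partitioning $\reals$ into cells $J_k=[kR,(k+1)R)$, I would call $k$ heavy when $\int_{J_k}|\widehat f|^{2}>R^{-\delta}\norm{f}_{\lt}^{2}$ (there are fewer than $CR^{\delta}$ such cells), set $S=\{n:\ n\text{ or }n-1\text{ is heavy}\}$, and take $f_\sharp=\sum_{n\in S}g_n$, $f_\flat=f-f_\sharp$, $M=|S|\lesssim R^{\delta}$. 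The structural properties of $f_\sharp$ are then essentially immediate: writing $h_n(x)=e^{-i\alpha_n x}g_n(x)$ gives $f_\sharp=\sum_{n\in S}h_n(x)e^{i\alpha_n x}$ with $\widehat{h_n}(\xi)=\psi(R^{-1}\xi)\widehat f(\xi+\alpha_n)$ supported in $[-R,R]$ and $\mathrm{supp}\,\widehat{f_\sharp}\subset\mathrm{supp}\,\widehat f$; the derivative bounds $\|\partial^N h_n\|_\infty\le C_N R^N\norm{f}_\infty$ come from $g_n=(e^{-i\alpha_n\cdot}f)\ast K_R$ with $K_R=RK_1(R\cdot)$ for a fixed Schwartz $K_1$, plus Young's inequality; and $\norm{h_n}_{\lt}\lesssim\norm{f}_{\lt}$ together with $\norm{f_\sharp}_{\lt}+\norm{f_\flat}_{\lt}\lesssim\norm{f}_{\lt}$ come from $0\le\sum_{n\in S}\psi_n\le1$ and Plancherel.

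Two points take slightly more effort. For \eqref{gflat}: on $J_k$ only $\psi_k$ and $\psi_{k+1}$ are supported and they sum to $1$ there, so $\widehat{f_\flat}=\widehat f\sum_{n\in\{k,k+1\}\setminus S}\psi_n$ on $J_k$; if $k$ is heavy then $\{k,k+1\}\subset S$ and $\widehat{f_\flat}$ vanishes on $J_k$, while if $k$ is light then $\int_{J_k}|\widehat{f_\flat}|^{2}\le\int_{J_k}|\widehat f|^{2}\le R^{-\delta}\norm{f}_{\lt}^{2}$; since $[\zeta-R,\zeta+R]$ meets at most three consecutive cells, plugging $\int_{J_k}|\widehat{f_\flat}|^{2}\le R^{-\delta}\norm{f}_{\lt}^{2}$ into the identity above closes \eqref{gflat}. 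The refined bound \eqref{extraLinftybound} is the step where I expect to need genuine care, since the triangle inequality only gives $\norm{f_\sharp}_\infty\le M\norm{f}_\infty$: I would write $f_\sharp=f\ast\Psi$ with $\Psi(x)=RK_1(Rx)\sum_{n\in S}e^{i\alpha_n x}$, observe that after the substitution $\theta=Rx$ the trigonometric sum $\sum_{n\in S}e^{in\theta}$ is $2\pi$--periodic with $L^2$--norm over a period equal to $(2\pi M)^{1/2}$ by orthogonality, and run a period-by-period Cauchy--Schwarz estimate using the rapid decay of $K_1$ to obtain $\norm{\Psi}_{L^1}\lesssim M^{1/2}$, so that $\norm{f_\sharp}_\infty\le\norm{\Psi}_{L^1}\norm{f}_\infty\lesssim M^{1/2}\norm{f}_\infty$. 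That gain of $M^{1/2}$ over the trivial bound is, I think, the only genuinely non-formal ingredient.
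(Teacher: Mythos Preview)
The paper does not give its own proof of this lemma; it is cited from \cite{CDR}, with the remark that the bound \eqref{extraLinftybound} ``is not stated explicitly in \cite{CDR}, but is an immediate consequence of the construction given there.'' Your argument is a correct reconstruction of that construction: the Fourier-side identity reducing \eqref{gflat} to a local $L^2$ concentration bound on $\widehat{f_\flat}$, the heavy/light dyadic partition at scale $R$ with threshold $R^{-\delta}\norm{f}_{\lt}^2$, and the verification of the structural properties of $f_\sharp$ are all correct and are exactly the approach of \cite{CDR}. Your derivation of \eqref{extraLinftybound} via $f_\sharp=f\ast\Psi$ and the period-by-period Cauchy--Schwarz bound $\norm{\Psi}_{L^1}\lesssim M^{1/2}$ is precisely the ``immediate consequence'' the paper alludes to.
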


Thus for most $s$, the function $x\mapsto \scriptd_s f_\flat(x)$ either has small norm, 
or is highly oscillatory.
The conclusion \eqref{extraLinftybound} is not stated explicitly in \cite{CDR},
but is an immediate consequence of the construction given there.

\subsection{Local $\flat/\sharp$ decomposition}\label{subsection:Local_decomp}
The proof of Theorem~\ref{maintheorem} is structured around a local version of Lemma~\ref{lemma:sharpflat}.
Fix auxiliary parameters $\gamma\in(\tfrac12,1)$ and $\tau_0>0$ satisfying
\begin{equation} \gamma+\tau_0<1.\end{equation}
%Let $\delta_0>0$ be sufficiently small to satisfy
%a restriction \eqref{delta0restriction} to be imposed below.
%This restriction depends only on $\Phi$, and on $\tau_0+\gamma$. 

Let $\bff$ be given, satisfying the hypotheses of Proposition~\ref{mainprop}
with respect to a large parameter $\lambda\in\reals^+$.
We may assume henceforth, with no loss of generality, 
that $\norm{f_j}_\infty = O(1)$ for each $j\in\four$.
%The desired bound \eqref{ineq:mainprop} is then $|\scriptt(\bff)| = O(\lambda^{-\delta})$.

For $m\in\integers$ let $I_m,I_m^*$ be the intervals of lengths
$\lambda^{-\gamma}$ and $3\lambda^{-\gamma}$, respectively,
centered at $m\lambda^{-\gamma}$.
Introduce a partition of unity $1 = \sum_m \eta_m^2$
in $\reals^1$, with each $\eta_m\in C^\infty_0$ supported
in the interval concentric with $I_m$ of sidelength $2\lambda^{-\gamma}$,
and satisfying $\norm{\nabla^N \eta_m}_{C^0} = 
O(\lambda^{N\gamma})$ for every nonnegative integer $N$.
Decompose
\begin{equation} f_j = \sum_{m\in\integers} \eta_m f_{j,m} 
\ \text{ with } \ f_{j,m} = \eta_{m} f_j. \end{equation}

% Let $f_{j,m}(y) = \eta_m(y)f_j(y)$.
For each $j\in\four$ and $m\in\integers$,
apply Lemma~\ref{lemma:sharpflat} to the function 
$y\mapsto f_{j,m}(\lambda^{-\gamma} y)$, 
with the parameter $R$ of that lemma defined to be $R = \lambda^{\tau_0}$,
and with the parameter $\delta$ of that lemma chosen to be a small number 
$\delta_j^*>0$ that depends on $j$.
% and satisfies $0<\delta_j^*\le\delta_0$.
In the final step of the proof of Theorem~\ref{maintheorem}, in \S\ref{section:final_conclusion},
the four values $\delta_j^*$ will be chosen, 
but for the present, each $\delta_j^*$ is small but otherwise arbitrary.
% allowed to be an arbitrary element of $(0,\delta_0]$.

Reverse this linear change of variables for each $m$, to express
\begin{equation} 
f_j = f_{j,\sharp}+f_{j,\flat}\ \text{ with } \ 
\left\{
\begin{aligned}
	f_{j,\sharp} &= \sum_{m\in\integers} \eta_m \cdot f_{j,m,\sharp}
\\
	f_{j,\flat} &= \sum_{m\in\integers} \eta_m \cdot f_{j,m,\flat}
\end{aligned} \right. \end{equation}
where the terms $f_{j,m,\sharp}$ enjoy these properties:
\begin{equation} \label{fsharpsums}
f_{j,m,\sharp}(x) = \sum_{n=1}^M h_{j,m,n}(x) e^{i\alpha_{j,m,n}x}
\end{equation}
with 
\begin{equation} \label{fsharpproperties}\left\{
\begin{gathered}
|\alpha_{j,m,n}| = O(\lambda),
\\
M = O(\lambda^{\delta_j^* \tau_0/2}),
\\
\norm{\partial^N h_{j,m,n}}_\infty = O_N(\lambda^{(\tau_0+\gamma) N})
\ \text{ 
for every $N\in\naturals$,}
\\
\text{$\widehat{h_{j,m,n}}(\xi)$ is supported
where $|\xi|\le \lambda^{\tau_0+\gamma}$.}
\end{gathered} \right. \end{equation} 
Moreover, 
\begin{equation} \label{alpha1lowerbound}
|\alpha_{1,m,n}| \gtrsim \lambda.
\end{equation}
This lower bound for $|\alpha_{1,m,n}|$ holds by the 
assertion concerning the support of $\widehat{f_\sharp}$
in Lemma~\ref{lemma:sharpflat}, since
$\widehat{f_1}(\xi)$ is assumed to be supported where $|\xi|\gtrsim\lambda$
and $R = \lambda^{\tau_0}$ is chosen to be small relative to $\lambda$.

The summands $f_{j,m,\flat}$ satisfy
\begin{equation} \label{keyflatproperty}
\int_{\reals} \int_{|\xi| \le \lambda^{\tau_0+\gamma}} 
|\widehat{\md_s f_{j,m,\flat}}(\xi)|^2\,d\xi\,ds
\lesssim \lambda^{-\delta_j^*}\lambda^{-2\gamma}.
\end{equation}
The factor of $\lambda^{-2\gamma}$ arises because $\norm{f_{j,m}}_{L^2}^4
= O(\lambda^{-2\gamma})$, which is a consequence of the hypothesis
that $\norm{f_j}_{L^\infty} = O(1)$.  The factor $\lambda^{-\delta_j^*}$
encodes an improvement over this trivial bound $O(\lambda^{-2\gamma})$.

\begin{comment}
% ?? removed 11/20. Too simplistic
Next, express
\begin{multline}
\scriptt(\bff)
= \scriptt(f_{1,\sharp},f_{2,\sharp},f_{3,\sharp},f_{4,\sharp})
\\ + \scriptt(f_{1,\flat},f_2,f_3,f_4)
+ \scriptt(f_{1,\sharp},f_{2,\flat},f_3,f_4)
\\ + \scriptt(f_{1,\sharp},f_{2,\sharp},f_{3,\flat},f_4)
+ \scriptt(f_{1,\sharp},f_{2,\sharp},f_{3,\sharp},f_{4,\flat}).
\end{multline}
On the right are two kinds of terms.
In the first kind, each of the four functions that appear 
is of the type $f_{j,\sharp}$.
In the second kind, at least one function is of the type $f_{j,\flat}$.
\end{comment}

\section{Contribution of $(f_{1,\sharp},f_{2,\sharp},f_{3,\sharp},f_{4,\sharp})$}

In this section, we analyze $\scriptt(\bg) = \scriptt(g_1,\dots,g_4)$ with each $g_j$ of the 
local exponential monomial form
\begin{equation} \label{gjsum}
g_j(x) = 
\sum_{m\in\integers} \eta_m(x) h_{j,m}(x)e^{i\alpha_{j,m} x}
\end{equation}
and satisfying
\begin{equation} \label{gjproperties} \left\{ \begin{aligned}
	&\alpha_{j,m}\in\reals,
	\\& |\alpha_{j,m}| = O(\lambda),
\\& |\alpha_{1,m}|\asymp\lambda,
	\\& \norm{h_{j,m}}_{C^N} = O_N(\lambda^{(\gamma+\tau_0)N})\ \text{ for every $N\ge 0$.},
\end{aligned} \right. \end{equation}
with the auxiliary functions $\eta_m$ introduced in \S\ref{subsection:Local_decomp}.
Thus $g_j$ is of the form $f_{j,\sharp}$ in the local $\flat/\sharp$ decomposition,
but with a single summand for each index $m$; in this respect, the title
of this section is a misnomer.
Multilinearity of the form $\scriptt$ will be used in \S\ref{section:final_conclusion} to 
reduce matters to this local monomial case. 

\begin{proposition} \label{prop:sharp}
There exist $\kappa>0$ and $C<\infty$ such that for any $\bg$
of the form \eqref{gjsum} satisfying \eqref{gjproperties},
\begin{equation} |\scriptt(\bg)| \le C\lambda^{-\kappa}.  \end{equation}
\end{proposition}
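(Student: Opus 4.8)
The plan is to prove Proposition~\ref{prop:sharp} by an iterated Cauchy--Schwarz argument that eliminates the four functions $g_j$ one at a time, at each stage reducing the number of ``free'' factors by one while introducing an integration over a shift parameter $s_j$ and replacing each surviving $g_k$ by $\scriptd_{s_j}g_k$ composed with the relevant $\varphi$'s. The key observation driving the argument is that for a local exponential monomial $g_j(x) = \sum_m \eta_m(x) h_{j,m}(x) e^{i\alpha_{j,m}x}$, the operation $\scriptd_s$ essentially kills the oscillation: $\scriptd_s g_j$ is, up to the slowly varying amplitudes $h_{j,m}$ and the localization, a function of the form $\sum_m (\text{amplitude})\, e^{i\alpha_{j,m}s}$, which does not grow with $\lambda$ in the $x$ variable on each $I_m$. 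So after eliminating $g_4$ by Cauchy--Schwarz in the variable dual to $\nabla\varphi_4$ (using the vector field $V_4$, which annihilates $\varphi_4$), one is left with a trilinear expression involving $\scriptd_{s_4}g_1, \scriptd_{s_4}g_2, \scriptd_{s_4}g_3$ integrated against $\varphi_1,\varphi_2,\varphi_3$ and the pushed-forward cutoff, together with the factor $V_4\varphi_j$ appearing in the phase via the change of variables. Iterating, after three Cauchy--Schwarz steps one reaches a single integral in $x$ (and in the accumulated shift variables $s_2,s_3,s_4$) of a pure phase $e^{i\lambda\Psi}$ times slowly varying amplitudes, where $\Psi$ is built from the $\alpha_{j,m}$'s, the $\varphi_j$'s, and the iterated $V_k\varphi_j$ factors.

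The next step is to extract genuine oscillation from this final integral. The first factor $g_1$ has frequency $|\alpha_{1,m}|\asymp\lambda$, so it never degenerates: this is exactly why the hypothesis \eqref{alpha1lowerbound} and the corresponding $|\alpha_{1,m}|\asymp\lambda$ in \eqref{gjproperties} are imposed. The phase $\Psi$ will, schematically, contain a term of the form $\alpha_{1,m}\cdot(\text{something involving }\varphi_1\text{ and the }V_k\varphi_j\text{'s})$, and I would invoke the auxiliary hypotheses to guarantee that $\Psi$ is not stationary: Hypothesis~\ref{auxhyp1} rules out degenerate cancellation in the first Cauchy--Schwarz step (the vanishing of $\sum_{j\ne k}(F_j\circ\varphi_j)V_k\varphi_j\nabla\varphi_j$), Hypothesis~\ref{auxhyp2} handles the linear independence needed to continue differentiating in the shift variables in the second and third steps (the vector fields $W_{j,k}$ annihilating $\Phi_{j,k}=(\varphi_j,sV_k\varphi_j)$), and Hypothesis~\ref{auxhyp3} provides the final nondegeneracy, that the ratio $\tfrac{V_l\varphi_i|V_k\varphi|^\tau}{V_l\varphi_j|V_k\varphi_j|^\tau}$ is not of split form $\tfrac{h_i\circ\varphi_i}{h_j\circ\varphi_j}$, which is precisely the obstruction to the phase being stationary along the relevant direction. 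Granting nonvanishing of an appropriate derivative of $\Psi$ of size $\gtrsim\lambda^{1-O(\tau_0)}$ (or of size $\gtrsim\lambda^{c}$ for some fixed $c>0$ after accounting for the $\lambda^{\gamma}$ scale of the intervals $I_m$), a standard non-stationary phase / van der Corput estimate on each interval $I_m$ of length $\lambda^{-\gamma}$ gives a gain of $\lambda^{-\kappa_0}$ per interval, and summing the $O(\lambda^{\gamma})$ intervals together with the trivial bounds on the amplitudes and the $O(\lambda^{\delta_j^*\tau_0/2})$ count of monomials yields the claimed bound $|\scriptt(\bg)|\le C\lambda^{-\kappa}$ provided $\tau_0,\rho$ and the $\delta_j^*$ are chosen small relative to the fixed $\gamma\in(\tfrac12,1)$, in accordance with the ordering of parameters fixed in \S2.

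I expect the main obstacle to be the bookkeeping and the uniformity in the second and third Cauchy--Schwarz steps. After the first elimination the amplitudes $\scriptd_{s_4}g_j$ depend on $s_4$ and are no longer simple monomials but sums over $m$ of products, and one must verify that the structural form \eqref{gjproperties} is (approximately) preserved under $\scriptd_s$ and under the substitution $x\mapsto\varphi$-adapted coordinates, so that the hypotheses can legitimately be applied at the next stage --- this is where Hypothesis~\ref{auxhyp2}'s auxiliary variable $s$ and the map $\Phi_{j,k}=(\varphi_j,sV_k\varphi_j)$ enter, and one has to be careful that the relevant quantities do not vanish and the geometry does not degenerate as $s\to 0$ (hence the restriction to $\reals\setminus\{0\}$ in Hypothesis~\ref{auxhyp2}, which forces a separate, easy treatment of the small-$s$ regime where the oscillation is weak but the measure of $s$ is correspondingly small). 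The other delicate point is ensuring that all the gains are genuinely powers of $\lambda$ and that no loss worse than $\lambda^{O(\tau_0)}$ or $\lambda^{O(\delta_j^*)}$ creeps in from the derivative bounds $\norm{h_{j,m}}_{C^N}=O(\lambda^{(\gamma+\tau_0)N})$; since $\tau_0$ and the $\delta_j^*$ are at our disposal and chosen after $\gamma$, this is a matter of tracking exponents rather than a conceptual difficulty, but it is where the proof is most error-prone.
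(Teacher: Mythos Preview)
Your proposal has a genuine gap, and the approach is not the one the paper takes.

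The core difficulty you have not addressed is this: for any \emph{single} interacting tuple $\bm$, the frequencies $(\alpha_{j,m_j})_{j\in\four}$ may be chosen adversarially so that the phase $\Phi_\bm(x)=\sum_j\alpha_{j,m_j}\varphi_j(x)$ is essentially stationary on $Q_\bm$, i.e.\ $\sum_j\alpha_{j,m_j}\nabla\varphi_j(z_\bm)\approx 0$. No amount of Cauchy--Schwarz or van der Corput on that cube yields decay in this case. Your assertion that the auxiliary hypotheses guarantee a derivative of $\Psi$ of size $\gtrsim\lambda^c$ is false at the level of one $\bm$; those hypotheses concern the nonexistence of functional relations, not the nonvanishing of a fixed linear combination of gradients. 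Moreover, the iteration you sketch does not work structurally: after one Cauchy--Schwarz in the $V_4$ direction the phase becomes $\sum_{j\ne 4}\alpha_{j,m_j}[\varphi_j(x+se_2)-\varphi_j(x)]$, which is \emph{not} a sum of compositions $F_j\circ\varphi_j$, so there is no clean way to ``eliminate $g_3$'' by a second Cauchy--Schwarz along $V_3$. And since $\scriptd_s$ applied to a local exponential monomial kills the $x$--oscillation (as you yourself note), the surviving factors $\scriptd_{s_4}g_j$ carry no phase to difference further.

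The paper's proof proceeds quite differently and uses no Cauchy--Schwarz here. One expands $\scriptt(\bg)=\sum_\bm I_\bm$ and integrates by parts directly: $I_\bm=O(\lambda^{-N})$ unless $\bm$ lies in the stationary set $\scriptn=\{\bm:|\sum_j\alpha_{j,m_j}\nabla\varphi_j(z_\bm)|\le\lambda^{\gamma+\tau_0+\rho}\}$. The entire content of the proof is then the bound $\#\scriptn=O(\lambda^{2\gamma-\tilde\delta})$. This is recast as a sublevel set problem by defining $F_j(y)=\lambda^{-1}\alpha_{j,m}$ for $y\in I_m^*$; the key point is that each $F_j$ is a function on $\reals^1$, because $\alpha_{j,m_j}$ depends only on the single index $m_j$. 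After choosing coordinates with $\varphi_4(x)=x_1$, the stationarity inequality reduces to $|\sum_{j=1}^3(\partial_{x_2}\varphi_j)\,(F_j\circ\varphi_j)|\lesssim\lambda^{-(1-\gamma-\tau_0-\rho)}$, a three--term variable--coefficient sublevel inequality. Its measure is bounded by invoking Theorem~\ref{thm:sublevel} from the companion paper \cite{sublevel4}; the hypotheses of that theorem are verified using the main hypothesis~\ref{mainhypothesis} (via antiderivatives) and auxiliary Hypothesis~\ref{auxhyp3}. Hypotheses~\ref{auxhyp1} and~\ref{auxhyp2} play no role in this proposition --- they enter only in the analysis of the flat terms (Proposition~\ref{prop:flat}), where a single Cauchy--Schwarz is indeed applied.
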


$\kappa$ depends only on $\Phi$ and on the choices of $\gamma$ and of $\tau_0$.
The constant $C$ also depends on the constants implicit in \eqref{gjproperties}.
Neither $\kappa$ nor $C$ depends on $\lambda$, nor on other properties of $\bg$.

In proving Proposition~\ref{prop:sharp},
we may assume that for each $j$, the sum in \eqref{gjsum}
extends over values of $m\in\integers$ that all lie in a common congruence class
modulo $3$. We reduce to this situation by decomposing the sum over all $m\in\integers$
into $3$ subsums and invoking the multilinearity of $\scriptt$. 
Doing this for each index $j\in\four$
results in a decomposition of $\scriptt(\bg)$ into
$81$ terms. It suffices to examine one such term.
Thus we may assume henceforth that each $g_j$
is a sum \eqref{gjsum} over $m$ lying in one such congruence class.
The terms in such a sum have supports in pairwise disjoint intervals $I_{m}^*$.

\begin{proof}[Proof of Proposition~\ref{prop:sharp}]
The multilinearity of $\scriptt$ gives an expansion
\begin{equation} \label{Tsharpsum}
\scriptt(\bg) = \sum_{\bm\in\integers^4} I_\bm
\end{equation}
with
\begin{equation} \label{Imintegral}
I_\bm = \int e^{i\Phi_\bm(x)}
\prod_{j=1}^4 h_{j,m_j}(\varphi_j(x)) \, \eta_{m_j}(\varphi_j(x))\,dx
\end{equation}
and
\begin{equation} \Phi_\bm(x)  = \sum_{j=1}^4 \alpha_{j,m_j}\varphi_j(x).  \end{equation}
In the sum \eqref{Tsharpsum},
each $m_j\in\integers$ is implicitly
restricted to some congruence class modulo $3$, which depends on $j$.
Terms with different indices $m_j$ thus have disjoint supports.

We say that $\bm = (m_1,\dots,m_4) \in\integers^4$ is interacting
if there exists $x\in B$ such that $\varphi_j(x)\in I_{m_j}^*$
for each $j\in\four$. If $\bm$ is not interacting
then the integrand in \eqref{Imintegral} vanishes identically, so $|I_\bm|=0$.
While the support of each $f_j$ potentially brings $O(\lambda^\gamma)$
indices $m_j$ into play, for a total of $O(\lambda^{4\gamma})$
tuples $\bm$ for which at least one factor $f_{j,m_j}\circ\varphi_j$
could be nonzero, only $O(\lambda^{2\gamma})$
tuples $\bm$ actually interact.
If $\bm$ interacts, then any two of the four indices $m_j$ determine the other two,
up to uniformly bounded additive ambiguity.

Trivially
\begin{equation} \label{Ibmbound} |I_\bm| = O(\lambda^{-2\gamma}) \end{equation}
uniformly in all parameters, since the integrand is $O(1)$
and is supported in a cube of sidelength $O(\lambda^{-\gamma})$ in $\reals^2$.
Together with the bound $O(\lambda^{2\gamma})$ for the number of interacting tuples $\bm$,
this gives $\scriptt(\bg) = O(1)$.
We seek to improve upon this trivial bound by a factor $O(\lambda^{-\tilde\delta})$,
for some $\tilde\delta>0$,
by exploiting the oscillation of $\Phi_\bm$ to improve the upper bound \eqref{Ibmbound} 
for most indices $\bm$.  However, for any particular interacting $\bm$ there exists 
a choice of parameters $\alpha_{j,m_j}$ for which the phase $\Phi_\bm$
is essentially stationary, whence there is no significant improvement.
Thus we must show that no matter how the ensemble of parameters $\alpha_{j,m_j}$
is chosen, within the context of our hypotheses, 
there is sufficient oscillation to create significant cancellation
for all but a relatively small minority of the interacting tuples $\bm$.

Let $\rho_0>0$ be a positive quantity, sufficiently small to satisfy
\begin{equation} \tau_0+\gamma+\rho_0<1.  \end{equation}
Let $\rho>0$ be a small parameter that belongs to $(0,\rho_0]$,
sufficiently small to satisfy various constraints imposed in the course of the proof.	

For each interacting $\bm\in\integers^4$, 
choose $z_\bm\in\reals^2$ satisfying 
\begin{equation}\varphi_j(z_\bm)\in I_{m_j}^*\ \forall\,j\in\four.\end{equation}
By repeatedly integrating by parts, exploiting the
upper bounds \eqref{fsharpproperties} for derivatives of $h_{j,m,n}$,
we conclude that
\begin{equation}
I_\bm = O(\lambda^{-N})\ \forall\,N<\infty
\end{equation}
unless $\Phi_\bm$ is approximately stationary in the sense that
\begin{equation}
|\nabla \Phi_\bm(z_\bm)| \le \lambda^{\gamma+\tau_0+\rho},
\end{equation}
that is, unless
\begin{equation} \label{stationarity4}
\big|\sum_{j=1}^4 \alpha_{j,m_j}\nabla\varphi_j(z_\bm)\big|
\le\lambda^{\gamma + \tau_0+\rho}.
\end{equation}
Indeed, each integration by parts yields a factor $O(\lambda^{-\rho})$,
so that $N'$ integrations by parts gives a bound $O(\lambda^{-N'\rho})$.

Define $\scriptn$ to be the set of all interacting $\bm\in\integers^4$
that satisfy the stationarity condition \eqref{stationarity4}.  Thus 
\begin{equation} \label{scriptNenters}
	|\scriptt(\bg)| \lesssim \lambda^{-N} + \lambda^{-2\gamma}\, \#(\scriptn),
\end{equation}
with $\#(\scriptn)$ denoting the cardinality of $\scriptn$.
Our goal is a bound of the form $\#(\scriptn) = O(\lambda^{2\gamma-\tilde\delta})$.

\begin{comment}
At this stage, potential overlap between supports of terms
$\eta_{m_j}\cdot h \cdot e^{i\alpha_{j,m_j}\cdot}$
in the sum defining $g_j$ becomes inconvenient.
We may assume that $\eta_m$ is supported in $I_{m-1}\cup I_m\cup I_{m+1}$.
For each number $k_j\in\{0,1,2\}$,
express $g_j$ as a sum of three terms,
in each of which the summation with respect to $m_j$
runs over those integers congruent to $k$ modulo $3$.
This decomposes $\scriptt(g_1,g_2,g_3,g_4)$
as a sum of $3^4$ expressions, parametrized by $\bk = (k_j: j\in\four)$, 
in each of which each $g_j$ 
has the auxiliary property that these terms are disjointly supported.
\end{comment}

%We exploit the disjointness of the terms in \eqref{Tsharpsum} by defining 
Define
\begin{equation} \label{Fjdefn}
F_j(x) = \lambda^{-1} \alpha_{j,m}
\ \text{ for each $x\in I_m^*= I_{m-1}\cup I_m\cup I_{m+1}$}
\end{equation}
for each $m\in\integers$ in the appropriate congruence class modulo $3$.
Define $\bF = (F_1,F_2,F_3)$.
The condition $|\alpha_{1,m}|\gtrsim\lambda$
for every $m$ implies that $|F_1(y)|\gtrsim 1$
for every $y\in I_m^*$.  

%for which $h_{1,m}$ does not vanish identically.
%Redefine $F_1$ so that $|F_1|\gtrsim 1$ on all other intervals $I_m^*$.

If the stationarity condition \eqref{stationarity4} holds, and if
$\bm\in\integers^4$ varies only over a single congruence class as indicated above, then
\begin{equation}\label{locallyglobalstationarity}
\big| \sum_{j=1}^4 (F_j\circ\varphi_j)(x)\,\nabla\varphi_j(x) \big|
= O\big(\lambda^{-1+\gamma +\tau_0 +\rho} + \lambda^{-\gamma}\big).
\end{equation}
Indeed, for any $x$ in the support of $\prod_j (\eta_{m_j}\circ\varphi_j)$,
\begin{equation}
\big| \alpha_{j,m_j}\nabla\varphi_j(x)
- \alpha_{j,m_j}\nabla\varphi_j(z_\bm) \big|
= O(\lambda\cdot \lambda^{-\gamma})
\end{equation}
since $\nabla^2 \varphi_j= O(1)$
and $|\alpha_{j,m_j}| = O(\lambda)$.
Invoking \eqref{stationarity4} and taking into account the factor 
of $\lambda^{-1}$ in the definition of $F_j$ gives \eqref{locallyglobalstationarity}.

Since $\gamma> \tfrac12$ with strict inequality,
$\lambda^{-\gamma} \ll \lambda^{-1+\gamma}\ll \lambda^{-1+\gamma+\tau_0+\rho}$
and therefore \eqref{locallyglobalstationarity} implies that
\begin{equation} \label{willdefinesublevelset}
\big| \sum_{j=1}^4 (F_j\circ\varphi_j)(x)\,\nabla\varphi_j(x) \big|
= O(\lambda^{-1+\gamma + \tau_0 +\rho}).
\end{equation}

Defining the sublevel set
\begin{equation}
S(\bF,\eps)
= \big\{x\in\reals^2: 
\big| \sum_{j=1}^4 (F_j\circ\varphi_j)(x)\,\nabla\varphi_j(x) \big|
<\eps\big\},
\end{equation}
we have arranged that
\begin{equation} \label{SFenters}
\lambda^{-2\gamma}\cdot\#(\scriptn)
\lesssim |S(\bF,\,C\lambda^{-(1-\gamma-\tau_0-\rho)})|
\end{equation}
for a certain constant $C<\infty$, 
where $\#(\scriptn)$ denotes the cardinality of $\scriptn$.
Thus an upper bound for
$\lambda^{-2\gamma}\cdot\#(\scriptn)$
will be a consequence of an upper bound for 
$|S(\bF,\,C\lambda^{-(1-\gamma-\tau_0-\rho)})|$.
Since $\gamma+\tau_0+\rho<1$ and $|F_1(\varphi_1(x))|\gtrsim 1$,
the inequality \eqref{willdefinesublevelset}
can hold only if there is significant cancellation in the sum
$\sum_{j\in\three} (F_j\circ\varphi_j)(x)$.

By making a smooth change of variables, we may
assume without loss of generality that 
\begin{equation} \varphi_4(x_1,x_2)\equiv x_1.\end{equation}
Then  writing $a_j = \partial\varphi_j/\partial x_2$,
\begin{equation}
S(\bF,\eps)\subset
S'(F_1,F_2,F_3,\eps) = 
\big\{x: 
\big| \sum_{j=1}^3 a_j(x) (F_j\circ\varphi_j)(x) \big|
<\eps\big\}
\end{equation}
for any $\eps>0$.
Observe that the additive inequality defining $S(\bF,\eps)$
involves only three functions $F_j$, and has variable coefficients $a_j$.

Let $\Omega\subset\reals^2$ be a closed bounded open ball or parallelepiped 
with nonempty interior.
For each $j\in\three$ let
$a_j:\overline{\Omega}\to\reals$ be a $C^\omega$ function
in a neighborhood of $\Omega$.
Likewise, let $\varphi_j:\Omega\to\reals^1$ be $C^\omega$ submersions. 
To any three-tuple $\bff = (f_j: j\in\three)$
of Lebesgue measurable functions $f_j:\Omega\to\reals$,
and to any $\eps>0$, associate the sublevel set
\begin{equation}
S'(\bff,\eps) = \{x\in\Omega: 
\big|\sum_{j=1}^3 a_j(x) (f_j\circ\varphi_j)(x)\big| <\eps\}.
\end{equation}

We are in a position to invoke the following result of the companion paper \cite{sublevel4}.
For each $j\in\four$, let $V_j$ be a real analytic vector field in $\tilde B$
that vanishes nowhere and satisfies $V_j(\varphi_j)\equiv 0$.
% rather than being proved below
% \S\ref{section:sublevelproof}.

%%%%%%%%%%%%%%%%%%%%%%%%%%%%%%%%%%%%%%%%%%%%%%%%%%%%%%% %%%%%%%%%%%%%%%%%%%%%%%%%%%%%%%%%%%%%%%%%%%%%%%%%%%%%%%
%%%%%%%%%%%%%%%%%%%%%%%%%%%%%%%%%%%%%%%%%%%%%%%%%%%%%%% %%%%%%%%%%%%%%%%%%%%%%%%%%%%%%%%%%%%%%%%%%%%%%%%%%%%%%%

% {\em Here is the formulation copied from \cite{sublevel4}.}
% ??
 
\begin{theorem} \label{thm:sublevel}
Let $a_j,\varphi_j\in C^\omega(\tilde B)$.
Assume that the coefficients $a_j$ vanish nowhere in $B$,
and that for each $i\ne j\in\{1,2,3\}$,
$\nabla\varphi_i(x)$ and $\nabla\varphi_j(x)$ are linearly independent at each $x\in B$.

%Assume that $(\ba,\bPhi)$ satisfies both the main hypothesis, and the auxiliary hypothesis. 
Assume that for any nonempty open set $U\subset \tilde B$,
if $f_j:\varphi_j(U)\to\reals$ are real analytic
and $\sum_{j=1}^3 a_j\cdot (f_j\circ\varphi_j)\equiv 0$ in $U$
then each $f_j\equiv 0$ in $\varphi_j(U)$.

Assume that for each permutation $(i,j,k)$ of $(1,2,3)$,
for any $\tau\in\reals$ and any nonempty open set $U\subset\tilde B$,
the function $\frac{a_i |V_k\varphi_i|^\tau}{a_j |V_k\varphi_j|^\tau}$
cannot be expressed in $U$ as $\frac{h_i\circ\varphi_i}{h_j\circ\varphi_j}$
for any real analytic functions $h_i,h_j$.

There exist $C<\infty$ and $\tau>0$ with the following property.
Let $\eps>0$ be arbitrary.
For any ordered triple $\bff$ of Lebesgue measurable functions
satisfying $|f_3(y)|\ge 1$ for every $y\in\varphi_3(B)$,
\begin{equation} \label{ineq:mainbound}
|S(\bff,\eps)|\le C\eps^\tau.
\end{equation}
\end{theorem}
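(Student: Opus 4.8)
The plan is to prove Theorem~\ref{thm:sublevel} by a descent that eliminates the three functions $f_j$ one at a time, flowing along the vector fields $V_k$, in the spirit of the trilinear oscillatory analysis of \cite{triosc}, and then to extract the quantitative exponent $\tau$ from the real-analyticity of the $a_j$ and $\varphi_j$. The first step is normalization: by transversality of $\varphi_1,\varphi_2$, a real-analytic change of variables lets us take $\varphi_1(x)=x_1$ and $\varphi_2(x)=x_2$ after shrinking $B$, whereupon transversality of $\varphi_3$ to each forces $\partial_{x_1}\varphi_3$ and $\partial_{x_2}\varphi_3$ to vanish nowhere. I would then shrink $B$ once more, using real-analyticity together with compactness of $\overline B$, so that each non-degeneracy hypothesis of Theorem~\ref{thm:sublevel} becomes effective: both the implication ``$\sum_{j=1}^3 a_j(f_j\circ\varphi_j)\equiv 0$ on an open set $\Rightarrow$ each $f_j\equiv 0$'' and the non-factorizability of the ratios $a_i|V_k\varphi_i|^\tau/(a_j|V_k\varphi_j|^\tau)$ are assertions about the vanishing of real-analytic functions, and on a small enough ball they upgrade to {\L}ojasiewicz-type lower bounds with constants depending only on the $a_j$ and $\varphi_j$. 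This is the step that converts ``non-degeneracy'' into an actual power of $\eps$.

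With $\mu=|S'(\bff,\eps)|$, the target is $\mu\lesssim\eps^\tau$. Dividing the defining inequality by the nonvanishing coefficient $a_3$ gives $|f_3\circ\varphi_3+(a_1/a_3)(f_1\circ\varphi_1)+(a_2/a_3)(f_2\circ\varphi_2)|\lesssim\eps$ on $S'$. Let $\Phi^{s}$ denote the real-analytic flow of $V_3$; since $V_3\varphi_3\equiv 0$, this flow preserves every fibre of $\varphi_3$, hence fixes the function $f_3\circ\varphi_3$. Subtracting the inequality from its $\Phi^{s}$-pushforward on the set $S'\cap(\Phi^{s})^{-1}(S')$ annihilates the $f_3$ term and leaves, at height $O(\eps)$, a relation involving only $f_1,f_2$, each evaluated at $\varphi_j(x)$ and at $\varphi_j(\Phi^{s}x)=\varphi_j(x)+sV_3\varphi_j(x)+O(s^2)$. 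In the variables $(x,s)\in\reals^2\times\reals$, with the operators $\md_s$ of \S\ref{subsection:Local_decomp} absorbing the shifts, this is precisely a two-function sublevel problem for $(\md_{\bullet}f_1,\md_{\bullet}f_2)$ composed with the maps $(x,s)\mapsto(\varphi_j(x),sV_3\varphi_j(x))$ (the $\Phi_{j,3}$ of Hypothesis~\ref{auxhyp2}), with coefficients $a_j/a_3$; transversality of $\varphi_1,\varphi_2$ makes these maps transverse in the relevant sense, as in the remark following Hypothesis~\ref{auxhyp2}. A pigeonhole over $s$, using $|f_3|\ge 1$ to balance the sizes of the three original terms, keeps the new base functions bounded below on a set whose measure is a fixed power of $\mu$. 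Running the step a second time --- flowing along a vector field annihilating one of $\Phi_{1,3},\Phi_{2,3}$ to eliminate one more function --- leaves a single base function and a single residual real-analytic coefficient.

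The $s$-integrations accumulated along the way are where a power attaches to the weights $|V_k\varphi_j|$; after tracking them, the residual coefficient takes the form of a ratio $a_i|V_k\varphi_i|^\tau/(a_j|V_k\varphi_j|^\tau)$ for some real $\tau$, up to harmless factors. The non-factorizability hypothesis of Theorem~\ref{thm:sublevel} says this is not of product form, and it is non-constant (else one could integrate the relation back up to a solution of the homogeneous equation $\sum_j a_j(f_j\circ\varphi_j)\equiv 0$ with a nonconstant factor, which is forbidden), so it admits a nontrivial {\L}ojasiewicz sublevel bound. This forces the final set --- hence, unwinding the descent, $\mu$ itself --- to be $O(\eps^{\tau'})$ for some $\tau'>0$ depending only on the $a_j$ and $\varphi_j$. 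To sharpen this to the clean statement $|S'(\bff,\eps)|\le C\eps^{\tau}$, I would run a self-improvement: first show that $|S'(\bff,\eps)|\ge\delta$ forces $\eps\ge c\,\delta^{A}$ for fixed $c,A$, then iterate over dyadic ranges of $\mu$.

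The main obstacle is that the $f_j$ are merely Lebesgue measurable, so the defining relation cannot be differentiated along $V_k$: each elimination must be carried out with the flows $\Phi^{s}$ and the difference operators $\md_s$ and then pigeonholed over the translation parameter, and at each descent step one must verify both that the coefficient ratios that are produced remain non-degenerate in the precise form the hypotheses demand (including the transversality of the lifted maps $\Phi_{j,k}$, of the type appearing in Hypothesis~\ref{auxhyp2}) and that the cumulative loss in $\mu$ stays polynomial, so that a single positive exponent $\tau$, uniform in $\eps$ and in $\bff$, survives all the reductions. This last uniformity is exactly why the hypotheses are imposed in the real-analytic category and why a {\L}ojasiewicz-type inequality, rather than mere non-vanishing, is indispensable.
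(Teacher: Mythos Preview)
The paper does not prove Theorem~\ref{thm:sublevel}; it is imported verbatim from the companion paper \cite{sublevel4} (see the sentence immediately preceding the theorem: ``We are in a position to invoke the following result of the companion paper \cite{sublevel4}''). So there is no proof in this paper to compare against; what you have written is a sketch of what the argument of \cite{sublevel4} ought to look like.

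Your outline is in the right spirit and tracks the strategy of \cite{triosc} and \cite{sublevel4}: eliminate functions one at a time by flowing along the $V_k$ and taking differences rather than derivatives (because the $f_j$ are only measurable), then close with a {\L}ojasiewicz-type bound on the residual real-analytic object. You also correctly identify the role of the non-factorizability hypothesis on the ratios $\frac{a_i|V_k\varphi_i|^\tau}{a_j|V_k\varphi_j|^\tau}$ as the obstruction that kills exact solutions at the final stage, and you flag the right difficulty, namely keeping the measure losses polynomial so that a uniform $\tau>0$ survives.

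Where the sketch is thin is precisely the place where \cite{sublevel4} does the real work. After the first elimination you land on a two-function sublevel problem in $(x,s)$ with coefficients involving $a_j/a_3$ and the pulled-back maps $\Phi_{j,3}$; fine. But your second elimination is asserted rather than carried out: you say one ``flows along a vector field annihilating one of $\Phi_{1,3},\Phi_{2,3}$'' and that the residual coefficient then ``takes the form'' of one of the forbidden ratios ``up to harmless factors.'' That identification is the crux, and it is not automatic. One has to show that the space of exact solutions of the two-function problem in $(x,s)$ is at most one-dimensional, that any nontrivial solution is homogeneous of some degree $\sigma$ in $s$ (this is where the exponent in $|V_k\varphi_j|^\tau$ actually enters), and that the resulting scalar relation is precisely of the product form the hypothesis forbids. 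The paper's own \S\ref{section:functionalequation} carries out exactly this kind of structural analysis for a closely related system (Lemmas~\ref{lemma:Comega}--\ref{lemma:sigma_not_0}), and the analogous step in \cite{sublevel4} is where the non-factorizability hypothesis is matched to the output of the descent. Your sketch would need that structural lemma made explicit before the {\L}ojasiewicz step can be invoked; as written, the passage from ``two functions remain'' to ``the residual coefficient is one of the forbidden ratios'' is a gap, not a routine computation.
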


%%%%%%%%%%%%%%%%%%%%%%%%%%%%%%%%%%%%%%%%%%%%%%%%%%%%%%% %%%%%%%%%%%%%%%%%%%%%%%%%%%%%%%%%%%%%%%%%%%%%%%%%%%%%%%
%%%%%%%%%%%%%%%%%%%%%%%%%%%%%%%%%%%%%%%%%%%%%%%%%%%%%%% %%%%%%%%%%%%%%%%%%%%%%%%%%%%%%%%%%%%%%%%%%%%%%%%%%%%%%%

\begin{comment}
	% ?? removed 10/3/2021
\begin{proposition} \label{prop:trioscsublevel}
Let $\Omega,a_j,\varphi_j$ be as above.
Suppose that for each $j\in\three$, $a_j$ vanishes nowhere in $\Omega$.
Suppose that 
for each pair of distinct indices $i \ne j\in\three$,
$\nabla\varphi_i(x)$ and $\nabla\varphi_j(x)$ are linearly independent
for every $x\in\Omega$.
{\it Suppose additional transversality \dots}
% ??
Suppose that for any nonempty open set $U\subset\Omega$
and any $C^\omega$ functions $g_j:U\to\reals$ satisfying
$\sum_{j=1}^3 a_j(x)(g_j\circ\varphi_j)(x)=0$
for every $x\in U$, all three functions $g_j$ vanish identically on $U$.
Suppose also that for any nonempty open set $U\subset\Omega$
and any $C^\omega$ functions $g_j:U\to\reals$ satisfying
$\sum_{j=1}^3 a_j(x)\,\nabla\varphi_j(x)\,(g_j\circ\varphi_j)(x)=0$
for every $x\in U$, all three functions $g_j$ vanish identically on $U$.

Then there exist $\varrho>0$ and $C<\infty$
such that 
for every $\eps>0$ and every three-tuple $\bff$ of Lebesgue measurable
functions 
satisfying 
\begin{equation} \label{lowerbound}
\max_j |f_j(\varphi_j(x)) |\ge 1\ \forall\, x\in\Omega,
\end{equation}
the sublevel set $S'(\bff,\eps)$ satisfies
\begin{equation} \label{ineq:trioscsublevel}
|S'(\bff,\eps)| \le C\eps^\varrho.
\end{equation}
\end{proposition}
\end{comment}

We wish to apply
Theorem~\ref{thm:sublevel} with $f_j=F_j$ and $a_j = \partial\varphi_j/\partial x_2$,
so must verify that its hypotheses are satisfied.
We have arranged that $|F_1|\gtrsim 1$ at every point of its domain.
The transversality hypothesis that $\nabla\varphi_j$ and $\nabla\varphi_4$ are
everywhere linearly independent for each $j\in \three$
implies that $a_j$ vanishes nowhere. 

We must verify that
the main hypothesis of Theorem~\ref{thm:sublevel} is satisfied.
It states that any $C^\omega$ local solution $\bg = (g_1,g_2,g_3)$  of the equation
\begin{equation}
\sum_j (g_j\circ\varphi_j)(x)\,\cdot\,\frac{\partial\varphi_j}{\partial x_2}(x)\equiv 0
\end{equation}
in any open set must vanish identically.
So let $\bg\in C^\omega$ satisfy this equation in a nonempty small open set. 
Choose an antiderivative $G_j$ for each $g_j$.  Then
\begin{equation}
\frac{\partial}{\partial x_2}
\big(\sum_{j=1}^3 G_j\circ\varphi_j\big)\equiv 0.
\end{equation}
Since $\varphi_4(x_1,x_2)\equiv x_1$,
this means that $\sum_{j=1}^3 (G_j\circ\varphi_j)$
can be expressed as $-G_4\circ\varphi_4$ for a certain function $G_4$.
Then $\sum_{j=1}^4 (G_j\circ\varphi_j)$ vanishes identically in an open set.
By the main hypothesis of Theorem~\ref{maintheorem}, this implies that each function $G_j$
% marker
is locally constant. Therefore the derivatives $g_j$ vanish identically.
Finally, the hypothesis concerning
$\frac{a_i |V_k\varphi_i|^\tau}{a_j |V_k\varphi_j|^\tau}$,
with $a_i = \partial\varphi_i/\partial x_2 = V_4(\varphi_i)$
is Hypothesis~\ref{auxhyp3} of Theorem~\ref{maintheorem}. 

Thus all hypotheses of Theorem~\ref{thm:sublevel} are indeed satisfied. 
% provided that $\gamma,\tau_0,\rho$ satisfy
Since the parameters were chosen to satisfy $\tau_0+\gamma+\rho\le \tau_0+\gamma+\rho_0<1$,
we conclude that
\begin{equation}
|S'(\bF,\lambda^{-(1-\gamma-\tau_0-\rho)})|
= O(\lambda^{-\tilde\delta})
\end{equation}
with $\tilde\delta = (1-\gamma-\tau_0-\rho_0)\varrho>0$
for a certain exponent $\varrho>0$.
Combining this with \eqref{scriptNenters} and \eqref{SFenters}, 
we conclude that
\begin{equation} \label{propsharp:conclusion}
|\scriptt(\bg)| \lesssim \lambda^{-\tilde\delta} + C_N \lambda^{-N} 
\end{equation}
for every $N<\infty$. 
This completes the proof of Proposition~\ref{prop:sharp}.
\end{proof}

% ?? removed 2/8/2022
%In order to reduce to the situation in which each $g_{j,\sharp}$
%is a local exponential monomial, we must sum over the contributions
%of $O(\prod_{j=1}^4 \lambda^{\delta_j}) = O(\lambda^{4\delta_0})$
%four-tuples of such functions.
%Thus we incur a large factor of $O(\lambda^{4\delta_0})$.
%This difficulty will be overcome in the final step of the proof.

%Therefore we must require that
%\begin{equation} \label{delta0restriction}
%\delta_0 < \tfrac14 (1-\gamma-\tau_0-\rho_0)\varrho \end{equation}
%in order for the bound \eqref{propsharp:conclusion} to be of any use.
%The parameter $\delta_0$ is to be chosen after
%%the quantities $\gamma,\tau_0,\rho_0,\varrho$, so this requirement can be imposed.

\section{Contributions of functions $f_{j,\flat}$} \label{section:flat}

% Let $f_{j,m}$ be as defined above.

\begin{proposition} \label{prop:flat}
For each $\delta>0$ there exist $\tau,C\in(0,\infty)$ with the following property.
Suppose that $\norm{f_j}_{L^\infty}\le 1$ for each $j\in\four$.
	Suppose that $f_1 = \sum_m \eta_m\cdot f_{1,m}$ where $f_{1,m}$ satisfies
\begin{equation} \label{flathypothesis}
\int_{\reals} \int_{|\xi| \le \lambda^{\tau_0+\gamma}} 
|\widehat{\md_s f_{1,m}}(\xi)|^2\,d\xi\,ds
\lesssim \lambda^{-\delta}\lambda^{-2\gamma}
\end{equation}
for each $m\in\integers$.  Then
\begin{equation} |\scriptt(\bff)| \le C\lambda^{-\tau}.  \end{equation}
\end{proposition}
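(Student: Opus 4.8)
The plan is to extract the function $f_1$, whose summands $f_{1,m}$ enjoy the flatness estimate \eqref{flathypothesis}, by an application of the Cauchy--Schwarz inequality that eliminates one of the other three functions, say $f_4$, and transforms the two remaining functions $f_2,f_3$ into functions of the form $\scriptd_s f_j$. Concretely, I would first localize: since $\norm{f_j}_\infty\le 1$ and $f_1=\sum_m\eta_m f_{1,m}$, and since the supports of the $\eta_m$ have bounded overlap, it suffices to bound $\scriptt(\eta_m f_{1,m},f_2,f_3,f_4)$ for a single $m$ and sum the $O(\lambda^\gamma)$ contributions; alternatively one keeps the full sum over $m$ and exploits disjointness after passing to congruence classes mod a fixed integer, exactly as in the proof of Proposition~\ref{prop:sharp}. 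Using the change of variables that makes $\varphi_4(x_1,x_2)\equiv x_1$, write $\scriptt(\bff)=\int (f_4\circ\varphi_4)(x)\, G(x)\,dx$ where $G(x)=\prod_{j=1}^3 (f_j\circ\varphi_j)(x)\,\eta(x)$, and slice in the $x_1$ variable. For fixed $x_1$, Cauchy--Schwarz in $x_2$ against the measure supported where $\varphi_4=x_1$ bounds the slice by $\norm{f_4}_{L^\infty}$ times the $L^2$ norm in $x_2$ of $\prod_{j=1}^3(f_j\circ\varphi_j)\eta$; squaring and integrating back in $x_1$ produces an integral of the form $\iint \prod_{j=1}^3 \scriptd_{s\,(\cdot)}f_j(\varphi_j(x))\cdots$, i.e. a trilinear expression in which each $f_j$ has been replaced by $\scriptd_{\sigma_j}f_j$ for an appropriate shift $\sigma_j$ depending on the slicing parameter.

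The second step is to analyze this trilinear integral. Here I would invoke the transversality hypothesis \ref{transversehyp} and the auxiliary hypotheses \ref{auxhyp1} and \ref{auxhyp2}, which are precisely designed to handle such expressions: after the Cauchy--Schwarz step, the relevant oscillatory/stationary-phase analysis is governed by the vector fields $V_k$ and the mappings $\Phi_{j,k}$ from hypothesis~\ref{auxhyp2}, and by the non-degeneracy of the system \eqref{eq:auxhyp1}. The point of \eqref{flathypothesis} is that $\scriptd_s f_{1,m}$ has its low-frequency part (frequencies $|\xi|\le\lambda^{\tau_0+\gamma}$) small in an $L^2$-averaged sense, with a gain $\lambda^{-\delta}$ over the trivial bound. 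So I would split $\scriptd_s f_{1,m}$ into its low-frequency and high-frequency parts. The low-frequency part is controlled directly by \eqref{flathypothesis}, which after unwinding the Cauchy--Schwarz normalization contributes $O(\lambda^{-\delta/2}\lambda^{-\tau'})$ for some $\tau'>0$ coming from the trivial size bounds on the remaining factors. The high-frequency part of $\scriptd_s f_{1,m}$ oscillates at frequency $\gtrsim\lambda^{\tau_0+\gamma}$, which on a cube of side $\lambda^{-\gamma}$ means it oscillates at a rate $\gtrsim\lambda^{\tau_0}$ across the cube; integrating by parts in the resulting oscillatory integral, using that the phases $\varphi_j$ are transverse so that this frequency does not get absorbed into the other factors (this is where hypothesis~\ref{auxhyp1} enters, guaranteeing no conspiracy among the three remaining phases can cancel the oscillation for a full-measure set of slices), yields a gain of $O(\lambda^{-N\rho})$ for any $N$, after possibly removing a small exceptional set of slicing parameters of measure $O(\lambda^{-c})$.

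The main obstacle I anticipate is controlling the high-frequency part cleanly: one must show that for all but an exceptionally small set of values of the slicing parameters $(x_1,s)$, the oscillation at frequency $\gtrsim\lambda^{\tau_0+\gamma}$ carried by the high-frequency part of $\scriptd_s f_{1,m}(\varphi_1(x))$ genuinely survives in the trilinear integral, i.e. is not cancelled by stationary-phase contributions from $\scriptd_{\sigma_2}f_2\circ\varphi_2$ and $\scriptd_{\sigma_3}f_3\circ\varphi_3$. This is a relative of the stationarity analysis in Proposition~\ref{prop:sharp}: one identifies the bad set of parameters as a sublevel set for $|\,\alpha\nabla\varphi_1 + (\text{contributions from }\varphi_2,\varphi_3)\,|$, and the hypotheses \ref{auxhyp1}, \ref{auxhyp2} guarantee that this sublevel set is small --- but because here we have genuine functions $f_2, f_3$ rather than exponential monomials, one cannot freeze their frequencies and instead must localize $f_2,f_3$ in frequency too (dyadically, or via the $\flat/\sharp$ decomposition again) and argue that each frequency-localized piece either is beaten by the oscillation of $f_1$ or, in the small residual range of frequencies, by the sublevel-set bound. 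Summing the at most $O(\log\lambda)$ dyadic pieces costs only a logarithmic factor, which is absorbed into $\lambda^{-\tau}$ after shrinking $\tau$. Collecting the low-frequency contribution $O(\lambda^{-\delta/2-\tau'})$, the high-frequency contribution $O(\lambda^{-N\rho})$, and the exceptional-set contribution $O(\lambda^{-c})$, and choosing $\tau$ smaller than $\min(\delta/2+\tau', N\rho, c)$, completes the proof.
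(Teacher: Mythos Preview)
Your opening steps---localize, change variables so $\varphi_4(x_1,x_2)\equiv x_1$, Cauchy--Schwarz to eliminate $f_4$, and split the resulting $\scriptd_s f_{1,m}$ into low and high frequencies at the scale $\lambda^{\tau_0+\gamma}$---match the paper's argument closely. The low-frequency piece is indeed handled directly by \eqref{flathypothesis}, exactly as you say.

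The genuine gap is in your treatment of the high-frequency piece. You assert that ``the hypotheses \ref{auxhyp1}, \ref{auxhyp2} guarantee that this sublevel set is small,'' but this is not true as stated, and the paper is explicit about it. After Cauchy--Schwarz and passage to local Fourier series, the relevant sublevel set is governed by the functional approximate equation
\[
\sum_{j=1}^3 (f_j\circ\Phi_j)(x,s)\,\nabla\varphi_j(x)\approx 0,
\qquad \Phi_j(x,s)=(\varphi_j(x),\,sD\varphi_j(x)),
\]
for unknown functions $f_j$ of two variables. Hypothesis~\ref{auxhyp1} rules out exact nontrivial solutions only of the special form $f_j(y,s)=sF_j(y)$, i.e.\ homogeneous of degree $\sigma=1$ in $s$. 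The paper shows (Lemmas~\ref{lemma:uniquebff}--\ref{lemma:sigma}) that exact solutions, if any, are forced to be homogeneous of some degree $\sigma\in\reals$; the main hypothesis excludes $\sigma=0$ and \ref{auxhyp1} excludes $\sigma=1$, but nothing in the hypotheses excludes other values of $\sigma$. When such a nontrivial solution exists, the sublevel-set bound you want simply fails without further input.

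The missing ingredient is that one must also carry along the information that the local Fourier coefficient $a_{1,m_1,s,k_1(m_1,s)}$ is \emph{large} (this is condition \eqref{extrainfo} in the paper's Definition~\ref{bigdefn}), and then invoke a separate trilinear oscillatory lemma (Lemma~\ref{lemma:sigmatriform}) showing that for $\sigma\notin\{0,1\}$, a function $f_1$ cannot have $|\widehat{\scriptd_s f_1}(\xi(s))|$ large for many $s$ when $\xi(s)\approx rs^\sigma$. This is what occupies \S\ref{section:functionalequation}--\S\ref{section:conclusionofproof} of the paper, and the paper's own Remark after Lemma~\ref{lemma:flatsublevel} says exactly that \S\ref{section:mostsigma}--\S\ref{section:conclusionofproof} would be superfluous only if \ref{auxhyp1} were strengthened to all $\sigma$. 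Your dyadic-frequency localization of $f_2,f_3$ does not substitute for this; the obstruction is not a bookkeeping issue but a structural one about which functional equations the hypotheses actually exclude.
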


\begin{comment}
	% ?? removed 11/20
It remains to analyze
\begin{multline}
\scriptt(f_{1,\flat},f_2,f_3,f_4)
+ \scriptt(f_{1,\sharp},f_{2,\flat},f_3,f_4)
\\ + \scriptt(f_{1,\sharp},f_{2,\sharp},f_{3,\flat},f_4)
+ \scriptt(f_{1,\sharp},f_{2,\sharp},f_{3,\sharp},f_{4,\flat}).
\end{multline}
Each of these four terms is of the form $\scriptt(\tilde f_1,\tilde f_2,\tilde f_3,\tilde f_4)$
where each $\tilde f_j$ is one of $f_j,f_{j,\sharp},f_{j,\flat}$.
For the purposes of this analysis, the essential feature of these terms
is that at least one of the functions $\tilde f_k$
in play is of the type $f_{k,\flat}$.
There is no difference below in the treatment of $\tilde f_j = f_j$ versus
the treatment of $\tilde f_j = f_{j,\sharp}$. Moreover, all hypotheses are
invariant under permutation of the four indices $j$.
Therefore it suffices to analyze
$\scriptt(f_1,f_2,f_3,f_4)$, under the assumption
that $f_1$ is of the type $f_{1,\flat}$. We continue to assume
that $\norm{f_j}_\infty = O(1)$ for each $j\in\four$. 
Thus $f_1$ is now assumed to satisfy
\begin{equation}
\int_{\reals} \int_{|\xi| \le \lambda^{\tau_0+\gamma}} 
|\widehat{\md_s (f_{1,m})_\flat}(\xi)|^2\,d\xi\,ds
\lesssim \lambda^{-\delta_0}\lambda^{-2\gamma}
\end{equation}
for each $m\in\integers$,
and $\norm{f_j}_\infty = O(1)$ for each $j\in\four$.
\end{comment}

To begin the proof of Proposition~\ref{prop:flat}, expand
\begin{equation}
\scriptt(\bff) = \sum_{\bm} \scriptt(\eta_{m_j}f_j: j\in\four)
\end{equation}
with the sum extending over all interacting
$\bm = (m_1,m_2,m_3,m_4)$. 
Change variables to $(x_1,x_2)\in\reals^2$
so that $\varphi_4(x_1,x_2)\equiv x_1$,
if necessary introducing a finite partition of unity independent of $\lambda$ 
so that such a transformation is possible on the support of each summand.

By the Cauchy-Schwarz inequality,
\begin{equation} \label{afterCS1}
|\scriptt(\bff)|
\lesssim 
\lambda^{-\gamma/2} \sum_\bm 
\Big(
\iiint \prod_{j\in\three} f_j(\varphi_j(x_1,x'_2)) \overline{f_j}(\varphi_j(x_1,x_2))
\,\,\zeta_\bm(x_1,x_2,x'_2)\,dx_1\,dx_2\,dx'_2
\Big)^{1/2}
\end{equation}
with 
\begin{equation} \zeta_\bm(y,t,t') = \prod_{j=1}^4 \, \eta_{m_j}(y,t)\eta_{m_j}(y,t'), \end{equation}
assuming as we may that all $\eta_m$ are real-valued.

\begin{notation}
\begin{equation}
f_j^s(x) = \scriptd_s(f_j)(x) =  f_j(x+s)\overline{f_j}(x).
\end{equation}
\end{notation}
%We will consistently use this notation, with 
%$s$ in $f^s(x)$ indicating a superscript, rather than an exponent.

\begin{notation}
$Q_\bm\subset\reals^2$
denotes a cube with sides parallel to the coordinate axes
of sidelength $\lambda^{-\gamma}$
such that for any $x'_2\in\reals$, 
the function $(x_1,x_2)\mapsto \zeta_\bm(x_1,x_2,x'_2)$
is supported in the cube $Q_\bm^*$
of sidelength $C\lambda^{-\gamma}$ concentric with $Q_\bm$, 
where $C<\infty$ depends only on $\Pphi$.
$\barz_\bm$ denotes some fixed element of $Q_\bm$.
\end{notation}

Substitute $x'_2 = x_2+s$ in the triple integral in \eqref{afterCS1},
changing variables from $(x_1,x_2,x'_2)$ to $(x_1,x_2,s)$.
Recalling that $\varphi_4(x_1,x_2)\equiv x_1$, write
\[ D\varphi_j(x) = \frac{\partial \varphi_j}{\partial x_2}(x)\ne 0.\]
For $x\in Q_\bm^*$,
\begin{equation} \label{linearapprox}
f_j(\varphi_j(x_1,x_2)) \overline{f_j}(\varphi_j(x_1,x'_2))
= f_j^{sD\varphi_j(\barz)}(\varphi_j(x))
+ O(\lambda^{1-2\gamma})
\end{equation}
for each $j\in\three$.
Indeed, as in \cite{triosc} and \cite{CDR}, 
\eqref{linearapprox} is a consequence of the assumption that $\widehat{f_j}(\xi)$
is supported where $|\xi| = O(\lambda)$;
this restriction on the support of the Fourier transform implies that
$\norm{\nabla f_j}_{L^\infty} = O(\lambda \norm{f_j}_{L^\infty}) = O(\lambda)$,
and replacing $\varphi_j$ by its first order Taylor polynomial
results in a perturbation of the argument $\varphi_j$ of $f_j$ 
of size $O(\lambda^{-2\gamma})$. 

Therefore
$|\scriptt(\bff)|$ is majorized by $C \lambda^{(1-2\gamma)/2}$ plus
\begin{equation} \label{ineq:plus}
C\lambda^{-\gamma/2}   \sum_\bm \Big(
\int_{|s| = O(\lambda^{-\gamma})}
\Big|
\int_{\reals^2} \prod_{j\in\three} f_j^{sD\varphi_j(\barz)}(\varphi_j(x)) 
\,\,\zeta_\bm(x,s)\,dx
\Big| \,ds \Big) ^{1/2}
\end{equation}
with $\zeta_\bm$ renamed but retaining its essential properties. 
Since $\gamma>\tfrac12$, the exponent $(1-2\gamma)/2$ is negative.

By Cauchy-Schwarz, the preceding line is in turn majorized by
\begin{equation} \label{tbm1}
C\lambda^{\gamma/2}   \Big( \sum_\bm 
\int_{|s| = O(\lambda^{-\gamma})}
\Big|
\int_{\reals^2} \prod_{j\in\three} f_j^{sD\varphi_j(\barz)}(\varphi_j(x)) 
\,\,\zeta_\bm(x,s)\,dx
\Big| \,ds \Big)^{1/2}.
\end{equation}
The sum extends only over interacting indices $\bm$,
and there are only $O(\lambda^{2\gamma})$ interacting indices,
so this application of Cauchy-Schwarz
incurs a factor $O(\lambda^\gamma)$ which combines
with the factor $O(\lambda^{-\gamma/2})$ in \eqref{ineq:plus}
to give the initial factor of $\lambda^{\gamma/2}$.

Associate the factors $\eta_{m_j}$ that were incorporated into the factors
$\zeta_\bm$ with the functions $f_j$, and
expand in local Fourier series
\begin{equation} \label{localFourier}
\eta_{m}^2(x) f_{j,m}^s(x) 
= \eta_{m}(x)
\sum_{k_j} a_{j,m,s,k_j} e^{i\pi\lambda^\gamma k_j x}.
\end{equation}
The coefficients satisfy
\begin{equation} \label{bessel}
\sum_{k_j\in\integers} |a_{j,m,s,k_j}|^2 = O(1)
\end{equation}
uniformly in all parameters $j,m,s$.

The property \eqref{flathypothesis} of $f_1$ now comes into play. By \eqref{keyflatproperty},
with $\lambda^\gamma k_1$ here playing the role there of the Fourier variable $\xi$, 
\begin{equation}
\int_{|s| = O(\lambda^{-\gamma})}
\sum_{ |k_1| \le \lambda^{\tau_0}}
|a_{1,m_1,s,k_1}|^2\,ds
= O(\lambda^{-\gamma-\delta}).
\end{equation}
Decompose 
\begin{equation}
\eta^2_{m_1}(x) f_{1,m_1}^s(x) 
= \eta_{m_1}(x) f_{1,m_1}^{s,\star}(x)
+ \eta_{m_1}(x) f_{1,m_1}^{s,\dagger}(x)
\end{equation}
with
\begin{equation}
f_{1,m_1}^{s,\star}(x)
= \sum_{|k_1|>\lambda^{\tau_0}} a_{1,m_1,s,k_1} e^{i\pi\lambda^\gamma k_1 x}
\end{equation}
and with 
$f_{1,m_1}^{s,\dagger}(x)$
equal to the sum over those $k_1$ satisfying $|k_1|\le\lambda^{\tau_0}$.

The second summand satisfies
\begin{equation}
\int_{|s| = O(\lambda^{-\gamma})}
\norm{\eta_{m_1}f_{1,m_1}^{s,\dagger}}_{\lt}^2\,ds
= O(\lambda^{-2\gamma-\delta}).
\end{equation}
It follows that
for each $\bm$, the contribution of $\eta_{m_1}f_{1,m_1}^{s,\dagger}$ to 
\[\int_{|s| = O(\lambda^{-\gamma})}
\Big| \int_{\reals^2} \prod_{j\in\three} f_j^{sD\varphi_j(\barz)}(\varphi_j(x)) 
\,\,\zeta_\bm(x,s)\,dx \Big| \,ds\]
is $O(\lambda^{-3\gamma-\delta/2})$. 
Therefore if $f_1^{sD\varphi_j}$ is replaced by $f_1^{sD\varphi_j,\dagger}$
in \eqref{ineq:plus}, then upon summation over all interacting $\bm\in\integers^4$,
the resulting contribution to the upper bound
for $|\scriptt(\bff)|$ is $O(\lambda^{-\delta/4})$.  
Therefore we may henceforth 
assume that 
%whenever $a_{1,m,s,k_1}\ne 0$
all indices $k_1$ appearing in the local Fourier expansion \eqref{localFourier} of $f_1^s$
satisfy $|k_1|>\lambda^{\tau_0}$.

Consider any interacting $\bm$.
Inserting the three local Fourier expansions into the inner integral in \eqref{tbm1},
that is, into
\begin{equation}  \label{inner}
\int_{\reals^2} \prod_{j\in\three} f_j^{sD\varphi_j(\barz)}(\varphi_j(x)) 
\,\,\zeta_\bm(x,s)\,dx,
\end{equation}
produces the sum of oscillatory integrals
\begin{equation} \label{anoscintegral}
\sum_\bk \prod_{j\in\three} a_{j,m_j,sD\varphi_j(\barz),k_j}
\int_{\reals^2} e^{i\pi\lambda^\gamma\Psi_\bk(x)}
\,\,\zeta_\bm\,dx,
\end{equation}
with phase functions
\begin{equation}
\Psi_\bk(x) = \sum_{j=1}^3 k_j\,\varphi_j(x).
\end{equation}
The sum extends over those $\bk=(k_1,k_2,k_3) \in\integers^3$ satisfying $|k_1|> \lambda^{\tau_0}$.

Let $\delta_1,\rho>0$ be small quantities to be chosen below.
$\delta_1$ will depend on $\delta$, and $\rho$ will depend on $\delta_1$.
% Let $\rho>0$ be a small quantity to be chosen below, which depends on $\delta$. 
The integral \eqref{anoscintegral} is $O(\lambda^{-N})$
for every $N<\infty$, as follows by repeated integrations by parts,
unless $\Psi_\bk$ satisfies the stationarity condition 
\begin{equation} \label{stationarity}
|\nabla\Psi_\bk(\barz_\bm)| = O(\lambda^{\rho}),
\end{equation}
that is, unless
\begin{equation} \label{stationarity2}
% \nabla\Psi_\bk(\barz_\bm) = 
\big| \sum_{j=1}^3 k_{j} \cdot \nabla\varphi_j(\barz_\bm) \big|
= O(\lambda^\rho ).
\end{equation}

When \eqref{stationarity2} holds,
each $k_{i}$ determines the other two quantities
$k_{j}$ up to additive ambiguity $O(\lambda^\rho)$.
That is, if \eqref{stationarity2} holds for $\bk=(k_1,k_2,k_3)$
and for $(k'_1,k'_2,k'_3)$,
and if $k_i=k'_i + O(\lambda^\rho)$, then $|k_j-k'_j| = O(\lambda^\rho)$
for each $j\in\three$.
This is a consequence of the transversality hypothesis,
which guarantees that any $2\times 2$ submatrix of the
$3\times 2$ matrix 
\[\begin{pmatrix} \nabla\varphi_1(x) \\ \nabla\varphi_2(x) \\ \nabla\varphi_3(x) \end{pmatrix}\]
is nonsingular for every $x\in B$.

The parameter $\delta_1>0$ now comes into play.
Decompose 
\begin{equation}
f_{j,m_j}^s = g_{j,m_j}^s+h_{j,m_j}^s
\end{equation}
where $h_{j,m_j}^s$ is the sum of those terms 
in \eqref{localFourier}
satisfying
\begin{equation}  \label{smallFouriercoeffs}
|a_{j,m_j,s,k_j}| \le \lambda^{-\delta_1}
\end{equation}
and if $j=1$, also satisfying $|k_1|>\lambda^{\tau_0}$.

As in \cite{triosc},
it is a consequence of the transversality hypothesis,
\eqref{stationarity}, and the local Fourier coefficient bound \eqref{bessel} that
\eqref{inner} equals
\begin{equation}  \label{innerg1}
O(\lambda^{-2\gamma-\delta_1 + C\rho}) + 
\int_{\reals^2} \prod_{j\in\three} g_j^{sD\varphi_j(\barz)}(\varphi_j(x)) 
\,\,\zeta_\bm(x)\,dx.
\end{equation}
For each $(j,m_j)$, the number of frequencies $k_{j,m_j}$
for which \eqref{smallFouriercoeffs} fails to hold,
is $O(\lambda^{2\delta_1})$ by Bessel's inequality \eqref{bessel}.
Consequently, for each index $j$, 
the function $\sum_{m_j} g_{j,m_j}^s$ is a sum of $O(\lambda^{2\delta_1})$
functions of the form
\begin{equation}
\tilde g_j(x,s) = 
\sum_{m\in\integers}
\eta_m(x)\, a_{j,m,s}\, e^{i\pi\lambda^\gamma k_j(m,s)x},
\end{equation}
with complex scalar coefficients $a_{j,m,s}=O(1)$,
and with real frequency functions that satisfy
the supplementary condition $|k_1(m,s)|>\lambda^{\tau_0}$ for every $m$ for $j=1$.
% marker
We refer to such a function $\tilde g_j^s$ as being of local exponential
monomial form.
Altogether, we have a sum of the contributions of $O(\lambda^{6\delta_1})$ such functions.
As above, we ensure that the terms in this sum are disjointly
supported, by partitioning those $m\in\integers$
according to their congruence classes modulo $3$ for each index $j$.

The functions $(m,s)\mapsto k_j(m,s)$ now take on the leading role
in the analysis.
If we can show that there exists $\tilde\delta>0$, independent of $\rho$, such that
\begin{equation} \label{desideratum1}
\sum_{\bm} \int_{|s| = O(\lambda^{-\gamma})}
\Big| \int_{\reals^2} \prod_{j\in\three} \tilde g_j(\varphi_j(x),sD\varphi_j(\barz_\bm))
\,\zeta_\bm(x,s)\,dx \Big| \,ds
= O(\lambda^{-\gamma-\tilde\delta})
\end{equation}
uniformly for any three functions $\tilde g_j$
of monomial form with $|k_1(m,s)|\ge\lambda^{\tau_0}$ for all $(m,s)$,
then we may conclude that
\begin{equation} \label{desideratum2}
\sum_{\bm} \int_{|s| = O(\lambda^{-\gamma})}
\Big| \int_{\reals^2} \prod_{j\in\three} g_j^{sD\varphi_j(\barz_\bm)}
\,\zeta_\bm(x,s)\,dx \Big| \,ds
= O(\lambda^{-\gamma-\tilde\delta+6\delta_1}).
\end{equation}
The exponent $\delta_1$ is at our disposal, but is not permitted
to depend on $\rho$. Since $\tilde\delta$ is independent of $\rho$,
we may choose $\delta_1<\tfrac16\tilde\delta$
to obtain a favorable upper bound for this term.
% this would conclude the proof of Theorem~\ref{maintheorem}.
Henceforth we write $g_{j}^s$ rather than $\tilde g_{j,m_j}(\cdot,s)$
to indicate functions with these properties.

\begin{definition} \label{bigdefn}
Let $(g_j^s: j\in\three)$ be an ordered triple of functions of local exponential
monomial form satisfying $|k_1(m,s)|\ge \lambda^{\tau_0}$.
Let $f_1$ satisfy $\norm{f}_{L^\infty} = O(1)$. 
The associated set $\scriptm = \scriptm(g_1,g_2,g_3,f_1)$ is the set of all $(\bm,s)$
such that $\bm\in\integers^4$ is interacting, $s\in\reals$
satisfies $|s| = O(\lambda^{-\gamma})$,
$(\bm,s)$ satisfies the stationarity condition 
\begin{equation} \label{stationarity_restated}
\big|\sum_{j=1}^3 k_j(\bm,s)\nabla\varphi_j(\barz_\bm)\big| = O(\lambda^\rho),
\end{equation}
	%\eqref{stationarity},
and
\begin{equation} \label{extrainfo}
\Big| \int \eta_{m_1}(y) f_1(y+s)\overline{f_1(y)}
\,e^{-i\pi\lambda^\gamma k_1(m_1,s)\,y}\,dy\Big|
\gtrsim \lambda^{-\gamma-\delta_1}.
\end{equation}
\end{definition}

Denote by $|\scriptm|$ the measure of $\scriptm$,
with respect to counting measure on $\bm$,
as $\bm$ varies over all interacting elements of $\integers^4$,
and with respect to Lebesgue measure on $s\in\reals$.
Since there are $O(\lambda^{2\gamma})$ interacting indices $\bm$,
The trivial bound for the measure of $\scriptm$ is
$|\scriptm| = O(\lambda^{2\gamma}\lambda^{-\gamma}) = O(\lambda^\gamma)$,
with a factor $O(\lambda^{2\gamma})$
reflecting the number of interacting indices $\bm$,
and a factor $O(\lambda^{-\gamma})$
reflecting the restriction $|s| = O(\lambda^{-\gamma})$.

Denote by $|\scriptm|^\star$ the supremum of $|\scriptm|$,
taken over the set of all possible
tuples $(\tilde g_j: j\in\three)$ of local exponential monomials
enjoying the properties indicated above and all $f_1$.
In these terms, we have shown that
\begin{multline} %\label{desideratum1}
\sum_{\bm} \int_{|s| = O(\lambda^{-\gamma})}
\Big| \int_{\reals^2} \prod_{j\in\three} \tilde g_j(\varphi_j(x),sD\varphi_j(\barz_\bm))
\,\zeta_\bm(x,s)\,dx \Big| \,ds
\\
= O(\lambda^{-N}) + O(\lambda^{-2\gamma}|\scriptm|^\star).
\end{multline}
We note that the coefficient implicit in the notation $O(\lambda^{-N})$
does depend on $\rho$, but this dependence is harmless since the exponent $N$ is 
independent of $\rho$, indeed, is arbitrarily large.
Therefore the quantity \eqref{tbm1} 
associated to the ordered triple $(f_j^s: j\in\three)$ with which
we began \S\ref{section:flat} is majorized by
\begin{equation} \label{many_term_bounds}
O\big(\lambda^{(1-2\gamma)/2}
+ \lambda^{-\delta/4}
	%\lambda^{C\rho} 
+ \lambda^{-\delta_1/2}\lambda^{C\rho} 
	+\lambda^{-N}\big)
%C\lambda^{\gamma/2} \Big( \lambda^{-2\gamma-\delta_1+C\rho}\Big)^{1/2}
+ O\big(\lambda^{\gamma/2}\lambda^{6\delta_1}
(\lambda^{-2\gamma} |\scriptm|^\star)^{1/2}\big)
\end{equation}
for any $N<\infty$.
The first term is favorable since $\gamma>\tfrac12$,
and the second since $\delta>0$. The third term is favorable
provided that $\rho$ is chosen to be sufficiently small relative to $\delta_1$.

To complete the proof of Proposition~\ref{prop:flat}, it suffices to
establish the following upper bound for $|\scriptm|^\star$.

\begin{lemma} \label{lemma:flatsublevel}
There exists $\delta_0>0$ such that whenever $0<\delta_1\le\delta_0$,
and whenever $\rho>0$ is sufficiently small,
there exists $\varrho>0$ such that for any 
$\scriptm,\bg,f_1,\delta_1,\rho$ as in Definition~\ref{bigdefn},
\begin{equation} \label{ineq:mainsublevel} |\scriptm| =O(\lambda^{\gamma-\varrho}).  \end{equation}
\end{lemma}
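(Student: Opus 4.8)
\emph{Plan of proof.}
The plan is to improve the trivial estimate $|\scriptm| = O(\lambda^{\gamma})$ --- there being $O(\lambda^{2\gamma})$ interacting $\bm$ and the constraint $|s| = O(\lambda^{-\gamma})$ --- by a fixed positive power of $\lambda$. Since $\varphi_4(x_1,x_2)\equiv x_1$ we take $V_4 = \partial/\partial x_2$ and set $a_j = V_4\varphi_j = \partial\varphi_j/\partial x_2$, which vanishes nowhere by transversality. For an interacting $\bm$ and $j\in\three$, the restriction of each $m_j$ to a fixed residue class modulo $3$ makes $I_{m_j}^*$ the unique starred interval of its class containing $\varphi_j(\barz_\bm)$; hence there is a bounded Lebesgue measurable function $\mathcal K_j\colon\reals^2\to\reals$, piecewise constant in its first variable, with $|\mathcal K_j| = O(\lambda^{1-\gamma})$ and, by $|k_1(m,s)|\ge\lambda^{\tau_0}$, with $|\mathcal K_1|\gtrsim\lambda^{\tau_0}$, such that the frequency of $\tilde g_j$ relevant to $(\bm,s)$ equals $\mathcal K_j\bigl(\Phi_{j,4}(\barz_\bm,s)\bigr)$, where $\Phi_{j,4}(x,s) = \bigl(\varphi_j(x),\,sV_4\varphi_j(x)\bigr)$ is the generalized projection of Hypothesis~\ref{auxhyp2}. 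Writing $\mathcal G_j = \pi\lambda^{\gamma-1}\mathcal K_j$, so that $|\mathcal G_j| = O(1)$ and $|\mathcal G_1|\gtrsim\lambda^{-(1-\gamma)+\tau_0}$, the stationarity condition \eqref{stationarity_restated} reads
\[
\Bigl|\,\sum_{j\in\three}\mathcal G_j\bigl(\Phi_{j,4}(\barz_\bm,s)\bigr)\,\nabla\varphi_j(\barz_\bm)\,\Bigr| = O\bigl(\lambda^{-(1-\gamma)+\rho}\bigr),
\]
exhibiting $\barz_\bm$ as lying in a sublevel set at level $\eps := C\lambda^{-(1-\gamma)+\rho}$. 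Substituting $s = \lambda^{-\gamma}\sigma$ with $|\sigma| = O(1)$, the factor $\lambda^{-\gamma}$ is absorbed into the second slot of $\mathcal G_j$: with $\widetilde{\mathcal G}_j(y,u) := \mathcal G_j(y,\lambda^{-\gamma}u)$ --- still bounded and measurable, still with $|\widetilde{\mathcal G}_1|\gtrsim\lambda^{-(1-\gamma)+\tau_0}$ --- one has $\mathcal G_j\bigl(\Phi_{j,4}(x,\lambda^{-\gamma}\sigma)\bigr) = \widetilde{\mathcal G}_j\bigl(\Phi_{j,4}(x,\sigma)\bigr)$ for the \emph{same} fixed maps $\Phi_{j,4}$, so that all the $\lambda$-dependence now lies in the arbitrary bounded measurable functions $\widetilde{\mathcal G}_j$ and in the scale $\eps$.

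The core input is the $\reals^3$ counterpart, established in the companion paper \cite{sublevel4}, of Theorem~\ref{thm:sublevel}, in which the base $\reals^2$ is replaced by $\reals^3 = \reals^2\times\reals$ and the submersions $\varphi_j$ by the generalized projections $\Phi_{j,4}$; its hypotheses follow from the main hypothesis~\ref{mainhypothesis} together with Hypotheses~\ref{auxhyp1} (nondegeneracy of the corresponding lifted additive equation), \ref{auxhyp2} (linear independence of the annihilating fields $W_{j,4}$), and \ref{auxhyp3} (non-expressibility of the coefficient ratios). It produces an exponent $\varrho_0>0$ and a constant $C<\infty$, depending only on $\Pphi$, with the property that for the fixed compact set $K = \overline B\times\{|\sigma|\le C_0\}$ and any $\eps>0$,
\[
\Bigl|\bigl\{(x,\sigma)\in K:\ \bigl|\,\textstyle\sum_{j\in\three}\widetilde{\mathcal G}_j\bigl(\Phi_{j,4}(x,\sigma)\bigr)\,\nabla\varphi_j(x)\,\bigr| < \eps\bigr\}\Bigr| \;\le\; C\,\Bigl(\eps\big/\inf\nolimits_K|\widetilde{\mathcal G}_1|\Bigr)^{\varrho_0},
\]
the quotient by $\inf|\widetilde{\mathcal G}_1|$ arising by rescaling all three functions to the normalization $|f_3|\ge1$ of Theorem~\ref{thm:sublevel}. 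With $\eps = C\lambda^{-(1-\gamma)+\rho}$ and $\inf|\widetilde{\mathcal G}_1|\gtrsim\lambda^{-(1-\gamma)+\tau_0}$ the right side is $O\bigl(\lambda^{-(\tau_0-\rho)\varrho_0}\bigr)$, a gain since $\rho<\tau_0$.

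It remains to convert this measure bound into a bound on $|\scriptm| = \sum_{\bm}|\{s:(\bm,s)\in\scriptm\}|$. The plan is to fatten each $\barz_\bm$ to a cube $\tilde Q_\bm\subset Q_\bm$ of side $\asymp\lambda^{-\gamma}$ on which $\varphi_j$ stays within a single starred interval for every $j\in\three$; the slabs $\tilde Q_\bm\times\{\sigma\}$, taken over $(\bm,s)\in\scriptm$ with $\sigma = \lambda^\gamma s$, are then disjoint with total $\reals^3$-measure $\lambda^{-\gamma}|\scriptm|$, so a bound $O(\lambda^{-\varrho})$ for this measure yields \eqref{ineq:mainsublevel}. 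I expect this to be the main obstacle: $\widetilde{\mathcal G}_j$ is genuinely rough in its second slot, so the constraint holding at the point $\barz_\bm$ need not persist on all of $\tilde Q_\bm\times\{\sigma\}$, and one cannot feed the union of slabs directly into the sublevel estimate. The remedy uses the structural content of Definition~\ref{bigdefn}: the frequencies $k_j(m,t)$ in each $\tilde g_j$ are \emph{large} local Fourier coefficients of $\scriptd_tf_{j,m}$ (a property of all three $\tilde g_j$, from the construction preceding Definition~\ref{bigdefn}), so by Bessel's inequality \eqref{bessel} there are only $O(\lambda^{2\delta_1})$ of them for each $t$; and because $\widehat{f_j}$ is supported where $|\xi| = O(\lambda)$ --- whence $\norm{f_{j,m}'}_\infty = O(\lambda)$ --- these admissible frequencies vary with $t$ at rate $O(\lambda)$, hence over any $t$-window of length $\lambda^{-1-2\delta_1}$ they remain confined to a single set of cardinality $O(\lambda^{2\delta_1})$. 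Across $\tilde Q_\bm$ the second slot $sV_4\varphi_j(x)$ moves only by $O(\lambda^{-2\gamma})\ll\lambda^{-1-2\delta_1}$ (here $\gamma>\tfrac12$ and $\delta_1\le\delta_0$ small are essential). After discarding the exceptional set of $(\bm,s)$ for which some $sV_4\varphi_j(\barz_\bm)$ lies within $O(\lambda^{-2\gamma})$ of the end of its window --- a proportion $O(\lambda^{-(2\gamma-1-2\delta_1)})$ of the available $s$, contributing $O(\lambda^{\gamma-(2\gamma-1-2\delta_1)})$ to $|\scriptm|$ --- one partitions the remaining $(\bm,s)$ and the cubes $\tilde Q_\bm$ into $O(\lambda^{6\delta_1})$ classes according to which elements of the respective cardinality-$O(\lambda^{2\delta_1})$ sets the three frequencies realize, so that on each class $\widetilde{\mathcal G}_j\circ\Phi_{j,4}$ is constant on the relevant sub-cube. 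The corresponding disjoint slabs then lie --- after enlarging $\eps$ by the harmless $O(\lambda^{-\gamma}) = o(\eps)$ from the variation of $\nabla\varphi_j$ across the cube, again using $\gamma>\tfrac12$ --- in the sublevel set of the preceding display, and the measure bound gives $|\scriptm_{\mathrm{class}}| = O\bigl(\lambda^{\gamma-(\tau_0-\rho)\varrho_0}\bigr)$. Summing over the $O(\lambda^{6\delta_1})$ classes and adding the exceptional term,
\[
|\scriptm| = O\bigl(\lambda^{\gamma-(\tau_0-\rho)\varrho_0 + 6\delta_1}\bigr) + O\bigl(\lambda^{\gamma-(2\gamma-1-2\delta_1)}\bigr).
\]

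Finally one chooses $\delta_0$ (hence $\delta_1\le\delta_0$) small enough that $2\gamma-1-2\delta_1>0$ and $6\delta_1<\tfrac12\tau_0\varrho_0$, and then $\rho$ small enough that $\tau_0-\rho>\tfrac12\tau_0$; this gives \eqref{ineq:mainsublevel} with $\varrho = \min\{\tfrac14\tau_0\varrho_0,\ 2\gamma-1-2\delta_1\}>0$. All these smallness requirements respect the hierarchy of parameters, since $\varrho_0$ depends only on $\Pphi$ while $\tau_0$ is fixed before $\delta_1$ and $\rho$; and it is precisely in the fattening and exceptional-set argument --- reconciling the rough shift-dependence of the frequencies with the sublevel-set formalism --- that the hypotheses $\gamma>\tfrac12$, the smallness of $\delta_1$, and the bandwidth bound on $\widehat{f_j}$ enter in an essential way.
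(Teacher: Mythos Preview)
Your argument has a genuine gap at its core input. You claim that the $\reals^3$ sublevel theorem from \cite{sublevel4} applies because ``its hypotheses follow from the main hypothesis together with Hypotheses~\ref{auxhyp1}, \ref{auxhyp2}, and \ref{auxhyp3},'' and in particular you cite Hypothesis~\ref{auxhyp1} as giving ``nondegeneracy of the corresponding lifted additive equation.'' But Hypothesis~\ref{auxhyp1} only rules out nonzero $C^\omega$ solutions of $\sum_{j\ne 4}(F_j\circ\varphi_j)\,V_4\varphi_j\,\nabla\varphi_j\equiv 0$, which is precisely the case $\sigma=1$ of the homogeneous family $\sum_{j}(F_j\circ\varphi_j)\,(D\varphi_j)^\sigma\,\nabla\varphi_j\equiv 0$. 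The lifted equation $\sum_{j\in\three}(f_j\circ\Phi_{j,4})\,\nabla\varphi_j\equiv 0$ may very well have nontrivial solutions: the paper shows (Lemmas~\ref{lemma:Comega}--\ref{lemma:sigma}) that any such solution takes the form $f_j(y,s)=|s|^\sigma F_j(y)$ for a unique exponent $\sigma$ determined by $\Pphi$, and the stated hypotheses only force $\sigma\ne 0$ (Lemma~\ref{lemma:sigma_not_0}, via the main hypothesis) and $\sigma\ne 1$ (Hypothesis~\ref{auxhyp1}). For any other $\sigma$ a nontrivial solution can exist, and then the sublevel set need not be small---it can cluster near that solution---so no direct $\reals^3$ sublevel bound of the form you invoke is available. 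The paper's Remark immediately following Lemma~\ref{lemma:flatsublevel} says exactly this: your direct approach would succeed only if Hypothesis~\ref{auxhyp1} were strengthened to all $\sigma\in\reals$.

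What is missing from your argument is any use of the condition \eqref{extrainfo} in Definition~\ref{bigdefn}, the lower bound $\gtrsim\lambda^{-\gamma-\delta_1}$ on the local Fourier coefficient of $\scriptd_s f_1$ at frequency $k_1(m_1,s)$. This is not decorative: it is the mechanism by which the paper disposes of the remaining exponents $\sigma\notin\{0,1\}$. In \S\ref{section:conclusionofproof} the paper shows via Lemma~\ref{lemma:approxsolns3} that either $|\scriptm|$ is already small or $k_1(m_1,s)\approx b|s|^\sigma F_1$ on a large subset; then Lemma~\ref{lemma:sigmatriform} (\S\ref{section:mostsigma}) shows that for $\sigma\notin\{0,1\}$, a function whose Fourier coefficient at a frequency $\approx r|s|^\sigma$ is uniformly large in $s$ cannot exist, converting \eqref{extrainfo} into the desired upper bound. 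Your proposal bypasses this entire branch of the argument and therefore does not establish the lemma under the hypotheses actually assumed. (Separately, your ``fattening'' step is more elaborate than needed in the no-solution case---the paper in \S\ref{section:nosolutions} simply defines $f_j(y,s)$ to be constant on each $I_m^*$, so the cubes $Q_\bm\times\scripts^*(\bm)$ sit in the sublevel set without any partitioning---but this is secondary to the gap above.)
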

% marker?

If $\delta_1$ is chosen to be $\le\min(\delta_0,\tfrac1{24}\varrho)$
then the last term in \eqref{many_term_bounds}
is $O(\lambda^{-\varrho/4})$, and thus Lemma~\ref{lemma:flatsublevel}
yields the required upper bound for \eqref{tbm1}.
% marker

\begin{comment}
\begin{lemma} \label{lemma:flatsublevel}
	\fbox{\em Restate hypotheses/auxiliary hypotheses}\ 
Let $\rho_0,\tau_0$ satisfy $0<\rho_0<\tau_0<\gamma$.
There exists $\varrho>0$ with the following property.
For $j\in\three$ let $k_j:\integers\times [-\lambda^{-\gamma},\lambda^{-\gamma}]
\to\reals$ be measurable functions. Suppose that
$|k_1(m,s)|\ge\lambda^{\tau_0}$ for all $(m,s)$.
Let 
\begin{equation} \label{scriptsdefn}
\scripts = \big\{(\bm,s): 
	\text{ $\bm$ is interacting and }
\big|\sum_{j=1}^3 k_j(m_j,s)\,\nabla\varphi_j(\barz_\bm)\big| = O(\lambda^{\rho_0}) \big\}.
\end{equation}
\fbox{\em If $\delta_1$ is sufficiently small}
Then
\begin{equation} \label{ineq:mainsublevel}
|\scripts| = O(\lambda^{\gamma-\varrho}).
\end{equation}
\end{lemma}
\end{comment}

Lemma~\ref{lemma:flatsublevel} will be applied with the parameter $\rho$ in the discussion above
satisfying $\rho\le\rho_0$. The upper bound for $|\scripts|$
will provide an upper bound for $|\scriptm|^*$ by the stationarity condition \eqref{stationarity},
since $\lambda^\rho\le\lambda^{\rho_0}$ for large $\lambda$. The formulation emphasizes that $\varrho$
is then independent of $\rho$, allowing us to choose $\rho$ to be arbitrarily small
and hence sufficiently small relative to $\delta,\delta_1$.

The trivial bound in Lemma~\ref{lemma:flatsublevel} is $|\scripts| = O(\lambda^{\gamma})$.
As in the proof of Proposition~\ref{prop:sharp}, we have reduced matters
to an upper bound for the measure of a sublevel set;
Lemma~\ref{lemma:flatsublevel} plays the same role in the proof of Proposition~\ref{prop:flat} 
that Theorem~\ref{thm:sublevel} plays in the proof of Proposition~\ref{prop:sharp}.

\medskip \noindent {\bf Remark.}\ 
The assumption \eqref{extrainfo} in Definition~\ref{bigdefn}
would be unnecessary for Lemma~\ref{lemma:flatsublevel}, 
and \S\ref{section:mostsigma} and \S\ref{section:conclusionofproof} below
would likewise be superfluous, if auxiliary hypothesis~\ref{auxhyp1}
were strengthened to exclude nonzero real analytic 
solutions of $\sum_{j\ne k} (F_j\circ\varphi_j)\,(V_k\varphi)^\sigma \,\nabla\varphi_j=0$
for all $\sigma\in\reals$, rather than merely for $\sigma=1$.
% This would significantly simplify the proof.

Let $\scriptm,\bg,f_1,\delta_1,\rho$ be as in Lemma~\ref{lemma:flatsublevel}.
For each interacting $\bm$, define $\scripts^*(\bm)\subset\reals$
to be the set of $s\in\reals$ that satisfy
$|s| = O(\lambda^{-\gamma})$, \eqref{extrainfo}, and
\begin{equation}
% \\ |k_1(m_1,s)|\ge \lambda^{\tau_0}, 
\big| \sum_{j=1}^3 k_j(m_j,s)\,\nabla_x\varphi_j(x)| = O(\lambda^\rho)
\ \forall\,x\in Q_\bm^*.
\end{equation}
Since $\rho<\gamma$, 
the last inequality is equivalent to the stationarity condition
\eqref{stationarity_restated}
%\[\sum_{j=1}^3 k_j(\bm,s)\,\nabla_x\varphi_j(\bar z_\bm)| = O(\lambda^\rho)\]
in the definition of $\scriptm$,
after appropriate adjustment of the constant factors implicit in
the notations $O(\lambda^\rho)$.
Thus the conclusion \eqref{ineq:mainsublevel} of Lemma~\ref{lemma:flatsublevel} can be restated as
\begin{equation} \label{ineq:mainsublevelrestated}
\sum_{\bm} |\scripts^*(\bm)| = O(\lambda^{\gamma-\varrho}),
\end{equation}
where the summation extends over all interacting indices $\bm\in\integers^4$.
To complete the proof of Proposition~\ref{prop:flat}, it suffices to prove 
\eqref{ineq:mainsublevelrestated}. Our next four sections are devoted to this task.

\begin{comment}
% ?? removed July 7
The trivial upper bound for these sets $\scripts^*(\bm)$
is $\sum_{\bm} |\scripts^*(\bm)| = O(\lambda^{\gamma})$,
since there are $O(\lambda^{2\gamma})$
interacting tuples $\bm$, and $|s| = O(\lambda^{-\gamma})$.
In order to complete the proof of Theorem~\ref{maintheorem},
it suffices to prove the next lemma.

\begin{lemma} \label{lemma:mainsublevelbound}
\fbox{\em Provided the parameters satisfy all contraints}\ 
There exists $\delta>0$ such that
\begin{equation}
\sum_{\bm} |\scripts^*(\bm)| = O(\lambda^{\gamma-\delta}).
\end{equation}
\end{lemma}
The sum extends over all interacting tuples $\bm\in\integers^4$.
\end{comment}

\section{A functional (approximate) equation} \label{section:functionalequation}

To $\varphi_j$ are associated mappings $\Phi_j:B\times\reals\to\reals^2$
defined by
\begin{equation}
\Phi_j(x,s) = (\varphi_j(x),sD\varphi_j(x))
\end{equation}
where $D = \partial/\partial x_2$.
We next study solutions of the functional equation
\begin{equation} \label{Phisystem}
%\label{Phisystem} 
\sum_{j=1}^3 (f_j\circ\Phi_j)(x,s)\, \nabla \varphi_j(x)=0,
\end{equation}
in which the unknown is an ordered triple
$\bff = (f_j: j\in\three)$
of functions defined on some open subset of $\reals^2\times\reals^1$.
A solution $\bff$ is said to be trivial if it vanishes  identically.

% The equation \eqref{Phisystem} will play a significant role in the analysis.
We also study associated sublevel set inequalities,
which quantify the nonexistence of nontrivial solutions.
The results that we develop will be applied to functions $f_j$
defined by $f_j(y,s) = k_j(m,s)$ when $y\in I_m^*$,
and $k_j$ are the functions in the hypothesis of Lemma~\ref{lemma:flatsublevel}.
Throughout this section, all hypotheses of Theorem~\ref{maintheorem}
are implicitly assumed, although certain lemmas rely on only some of those hypotheses.

Here $(x,s) = ((x_1,x_2),s)$ belongs to some open subset of $\reals^2\times\reals^1$,
$\varphi_j$ are $C^\omega$ submersions 
mapping an open subset of $\reals^2$ to $\reals^1$,
and $f_j$ is defined on an open subset of $\reals^2$.
$\nabla=\nabla_x$ denotes the gradient with respect to $x$ alone,
and $D\varphi_j = \partial\varphi_j/\partial x_2$.
Thus each term 
$f_{j}(\varphi_j(x),sD\varphi_j(x)) \cdot \nabla\varphi_j(x)$
is $\reals^2$--valued.

For each $j\in\three$, $D\varphi_j$ vanishes nowhere.
This is a consequence of the transversality hypothesis \eqref{transversehyp}
and the choice of coordinates, in which $\varphi_4(x_1,x_2)\equiv x_1$,
and consequently $D = \partial/\partial x_2$ annihilates $\varphi_4$.

%Each component of the vector $\nabla\varphi_j$ 
%is a nowhere vanishing $C^\omega$ function.
% WRONG

\subsection{Exact solutions}
The equation \eqref{Phisystem} for an ordered
triple $\bff$ of functions $f_j$ is homogeneous with respect to $s$;
if $\bff(x,s)$ is a solution then so is $\bff(x,rs)$
for any $r\in\reals\setminus\{0\}$.

%if $t=\lambda^{-\gamma}s$ then
%$\bff(x,t)$ is a solution where $|t| = O(\lambda^{-\gamma})$
%if and only if $\bff(x,s)$ is a solution where $|s| = O(1)$.

The differential of each $\Phi_j$ with respect to $(x,s)\in\reals^3$
has rank $2$ at each point of its domain, since $D\varphi_j\ne 0$.
Thus each of the three relations
\begin{equation} \label{definefoliations}
\Phi_j(x,s) = \text{ constant $\in\reals^2$}
\end{equation}
defines a foliation in $\reals^3$ with one-dimensional leaves 
indexed by the constants on the right-hand side of \eqref{definefoliations}.
We refer to these leaves in $\reals^3$ as level curves of $\Phi_j$.
For each index $j$, the vector field
\begin{equation} \label{Vjdaggerdefn}
V_j^\dagger = \Big( -\partial_2\varphi_j,\, \partial_1\varphi_j,\, 
s(D\varphi_j)^{-1}\big(\partial_2\varphi_j\cdot\partial^2_{1,2}\varphi_j
- \partial_1\varphi_j\cdot\partial^2_{2,2}\varphi_j \big) \Big)
\end{equation}
vanishes nowhere and annihilates $\Phi_j$; $V_j^\dagger$
is tangent to the leaves of the associated foliation.
Since $\nabla\varphi_j$ are pairwise transverse in $\reals^2$,
it follows from an examination of the vector fields in $\reals^2$
defined by their first two components
that $V_i^\dagger,V_j^\dagger$ are everywhere linearly independent whenever $i\ne j$.
Therefore the transversality hypothesis \eqref{transversehyp} 
ensures that these foliations are everywhere pairwise transverse.
However, the hyperplane defined by $s=0$ is degenerate 
in the sense that all three tangent vectors $V_j^\dagger$ are tangent to 
this hyperplane at each of its points.

\begin{comment}
We assume, \fbox{\em as an auxiliary hypothesis}, that
these foliations are transverse at each point of 
$\reals^2\times (\reals^1\setminus\{0\})$,
in the sense that the three tangent vectors to the leaves through that point are
linearly independent. 
\fbox{\em I presume that this holds generically.}
\end{comment}

Write 
\begin{equation} \reals^3_{\ne 0} = \{(x,s)\in\reals^2\times\reals: s\ne 0\}.
\end{equation}

\begin{lemma} \label{lemma:Comega}
Let $\Phi$ satisfy the transversality hypothesis \ref{transversehyp}.
Let $\bff$ be any ordered triple of Lebesgue measurable functions
that satisfies the functional equation \eqref{Phisystem}
almost everywhere in some open subset $\Omega$ of $\reals^3_{\ne 0}$.
Then each function $f_j\circ\Phi_j$
agrees almost everywhere with a $C^\omega$ function in $\Omega$.
%(The same goes for any Lebesgue measurable $\bff$
%that satisfies \eqref{Phisystem} almost everywhere.)
\end{lemma}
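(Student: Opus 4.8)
The plan is to exploit the transversality of the three foliations defined by $\Phi_1,\Phi_2,\Phi_3$ in $\reals^3_{\ne 0}$ to turn the single vector equation \eqref{Phisystem} — which is two scalar equations in three unknowns $f_j\circ\Phi_j$ — into a genuinely elliptic-type overdetermined system, and then to bootstrap regularity from $L^\infty_{\mathrm{loc}}$ to $C^\omega$. First I would localize: fix a point $(x_0,s_0)\in\Omega$ with $s_0\ne 0$, and work in a small ball where all the structure (the $V_j^\dagger$, the coefficients $\nabla\varphi_j$) is real analytic and where the three foliations are pairwise transverse, as guaranteed by the discussion preceding the lemma. Since $\bff$ is only assumed measurable and the conclusion is about $f_j\circ\Phi_j$, it is harmless to replace $f_j$ by $f_j\circ\Phi_j$ as the unknown and regard the problem as one of solving \eqref{Phisystem} for three functions $u_j = f_j\circ\Phi_j$ on an open set in $\reals^3$, each $u_j$ constant along the leaves of the $j$-th foliation, i.e. $V_j^\dagger u_j = 0$.

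The key algebraic step: at each point the three vectors $\nabla\varphi_1(x),\nabla\varphi_2(x),\nabla\varphi_3(x)\in\reals^2$ are pairwise linearly independent (transversality hypothesis \ref{transversehyp}), so any two of them span $\reals^2$ and the third is a linear combination with nowhere-vanishing, real-analytic coefficients. Hence from \eqref{Phisystem} one can solve, say, for $u_3$ as $u_3 = b_1 u_1 + b_2 u_2$ with $b_i\in C^\omega$, and similarly express any one $u_j$ in terms of the other two. Now apply $V_3^\dagger$ to the relation $u_3 = b_1 u_1 + b_2 u_2$: the left side vanishes (since $u_3$ is constant on the leaves of $\Phi_3$), giving
\[
V_3^\dagger(b_1 u_1) + V_3^\dagger(b_2 u_2) = 0.
\]
Since $V_3^\dagger$ is transverse to $V_1^\dagger$ and $V_2^\dagger$, writing things out we get a first-order system in which $u_1,u_2$ are each differentiated in a direction transverse to their own leaf directions. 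Combining the three cyclic versions of this, the triple $(u_1,u_2,u_3)$ satisfies a first-order real-analytic system which, restricted appropriately, is \emph{determined and elliptic} (or can be completed to such by one more differentiation): each $u_j$ is annihilated by $V_j^\dagger$ and, from the eliminated equations, is also constrained in a transverse direction, so all first derivatives of $u_j$ are expressed through $u_1,u_2,u_3$ themselves with $C^\omega$ coefficients. This is exactly the setup in which one first upgrades $L^\infty_{\mathrm{loc}}$ (or even $L^1_{\mathrm{loc}}$) distributional solutions to $C^\infty$ by elliptic regularity / hypoellipticity, and then to $C^\omega$ by analytic hypoellipticity (Morrey–Nirenberg), or more elementarily by noting the system is a real-analytic first-order Frobenius-type system with analytic coefficients, whose solutions are real-analytic.

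I would carry this out in the order: (i) localize and reduce to bounded measurable $u_j$ with $V_j^\dagger u_j = 0$ distributionally; (ii) the transversality-driven elimination producing, for each $j$, an expression for a full set of first-order derivatives of $u_j$ in terms of $(u_1,u_2,u_3)$ with analytic coefficients; (iii) invoke elliptic/analytic regularity to conclude $u_j\in C^\omega$ locally. The main obstacle I anticipate is step (ii): making sure the elimination actually yields control of \emph{all} partial derivatives of each $u_j$ — i.e. that the span of $V_j^\dagger$ together with the transverse direction coming from differentiating the eliminated relation is all of $\reals^3$ — and in particular checking this does not degenerate; this is precisely where $s\ne 0$ is used, since on $\{s=0\}$ all $V_j^\dagger$ lie in the hyperplane and the argument collapses. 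A secondary technical point is justifying the manipulations (applying $V_j^\dagger$, multiplying by analytic functions) at the level of distributions rather than smooth functions, which is routine once the coefficients are known to be real-analytic and nowhere vanishing.
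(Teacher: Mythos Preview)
Your approach is sound and leads to the conclusion, but it is organized differently from the paper's proof and invokes heavier machinery than necessary.

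The paper argues by direct geometric propagation rather than by PDE regularity. From \eqref{Phisystem} and pairwise transversality of the $\nabla\varphi_j$ one solves to get $(f_1\circ\Phi_1)=h_j\cdot(f_j\circ\Phi_j)$ for $j=2,3$, with $h_j\in C^\omega$ determined by $\Pphi$ alone. Along any leaf $\Gamma$ of the $\Phi_2$--foliation, $f_2\circ\Phi_2$ is constant; hence the restriction of $f_1$ to the image curve $\Phi_1(\Gamma)\subset\reals^2$ equals a constant times a known $C^\omega$ function. The same holds for images of $\Phi_3$--leaves. Auxiliary hypothesis~\ref{auxhyp2} ensures these two families of image curves are transverse in the two--dimensional domain of $f_1$, so a single value of $f_1$ propagates to a $C^\omega$ description of $f_1$ in a full neighborhood. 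No differentiation of the unknowns, and no appeal to regularity theory, is needed; the argument is essentially an explicit integration.

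Your route---differentiate the algebraic relations along the $V_j^\dagger$ to obtain $V_k^\dagger u_1 = c_k\,u_1$ for each $k$, then use linear independence of $\{V_1^\dagger,V_2^\dagger,V_3^\dagger\}$ to get a closed first--order system $\nabla u_1 = A\,u_1$ with $A\in C^\omega$, and bootstrap---is the differential formulation of the same idea, and it works. Two remarks. First, the word \emph{elliptic} is inapt: the resulting system is not elliptic but a fully determined first--order linear system of Frobenius type; you do note this alternative, and it is the correct framework (the bootstrap $L^\infty_{\mathrm{loc}}\to C^\infty\to C^\omega$ is then elementary). Second, the nondegeneracy you need in step~(ii)---that $V_1^\dagger,V_2^\dagger,V_3^\dagger$ span $T\reals^3$---is precisely auxiliary hypothesis~\ref{auxhyp2}, not merely $s\ne 0$; pairwise transversality of the $V_j^\dagger$ (which follows from hypothesis~\ref{transversehyp}) does not by itself imply linear independence of the triple. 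The paper invokes hypothesis~\ref{auxhyp2} at exactly the corresponding point.
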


\begin{proof}
We work in a small neighborhood of an arbitrary point of $\Omega$,
and all assertions in this proof are concerned with appropriately
defined but unspecified such neighborhoods.
Thus ``$x\in\reals^2$'' means that $x\in\reals^2$ belongs to such
a neighborhood.
We initially simplify matters by assuming that each $f_j$ is continuous.

Any two gradients $\nabla\varphi_j(x)$
are transverse at every $x\in\reals^2$, and consequently the system 
\eqref{Phisystem} of two scalar linear equations in three unknown quantities $f_k$
can be solved locally to express 
\begin{equation} \label{fifj}
(f_1\circ\Phi_1)(x,s) = h_{j}(x)\,\cdot\, (f_j\circ\Phi_j)(x,s)
\end{equation}
with $h_{j}\in C^\omega$ independent of $s$, both for $j=2$ and for $j=3$.
The functions $h_j$ are entirely specified by the datum $(\varphi_j: j\in \four)$.	

If $\Gamma$ is any leaf of the foliation in $\reals^3_{\ne 0}$
defined by level curves of $\Phi_2$, then its image 
$\Phi_1(\Gamma)$ is a $C^\omega$ curve in the two-dimensional domain of $f_1$.
This is a direct consequence of
the transversality of the two vectors $\nabla\varphi_1,\nabla\varphi_2$.
The family of all such $\Phi_1(\Gamma)$ is a two parameter family of
curves in this domain, rather than a foliation.

In the same way, there is a corresponding family of curves $\Phi_1(\Gamma')$,
with $\Gamma'\subset\reals_{\ne 0}^3$ denoting level curves of $\Phi_3$. 
The auxiliary hypothesis~\ref{auxhyp2} of Theorem~\ref{maintheorem}
now enters the discussion for the first time. It
ensures that if $\Gamma,\Gamma'$ are level curves of $\Phi_2,\Phi_3$
respectively that meet at a point $(x,s)\in\reals^3_{\ne 0}$ then their images 
$\Phi_1(\Gamma)$ and $\Phi_1(\Gamma')$ intersect transversely
in $\reals^2$ at $\Phi_1(x,s)$.

Consider any point $(\barx,\bars)\in\reals^3_{\ne 0}=\reals^2\times\reals^1_{\ne 0}$.
Set $a = f_2(\Phi_2(\barx,\bars))$.
% Then $a = O(1)$.
Denote by $\bar\Gamma$ the level curve of $\Phi_2$ that passes through $(\barx,\bars)$.
The relation \eqref{fifj} expresses 
the restriction of $f_1$ to $\Phi_1(\bar\Gamma)$
as the constant $a$ multiplied by a real analytic
function, which depends only on the datum $(\varphi_k: k\in\four)$.

Consider those leaves $\Gamma'$ that intersect $\bar\Gamma$.
The restriction of $f_1$ to each leaf $\Phi_1(\Gamma')$ 
is likewise expressed by \eqref{fifj} 
as a given $C^\omega$ function, specified by the geometric data,
multiplied by the value of $f_1$ at the unique point
at which $\Phi_1(\Gamma')$ intersects $\Phi_1(\bar\Gamma)$.
The latter value of $f_1$ was expressed 
in the preceding paragraph as the product of the scalar $a$
with a specified $C^\omega$ function.
Therefore in a neighborhood of any point of the domain of $f_1$
at which the curves $\Phi_1(\bar\Gamma)$ and $\Phi_1(\Gamma')$ are transverse
to one another, $f_1$ is $C^\omega$.

% ?? Is the next sentence useful if transversality hypothesis is dropped?
%Moreover, $f_1\circ\Phi_1$ is locally real analytic,
%even if this transversality fails to hold.

The assumptions are invariant under permutation of the indices
in $\three$, so corresponding conclusions hold for $f_2$ and for $f_3$.

The case of Lebesgue measurable $\bff$ is treated by applying the same
reasoning to almost every $(\barx,\bars)$.
\end{proof}

% ?? removed 6/23
%This reasoning does not establish the existence of any solution $\bff$
%that does not vanish identically, because it has not been shown
%that functions $f_j$ described in this argument actually
%satisfy \eqref{Phisystem}.

The proof of Lemma~\ref{lemma:Comega} has implicitly 
shown that the space of local solutions has dimension less than or equal to $1$:

\begin{lemma} \label{lemma:uniquebff}
In any connected open subset of $\reals^2\times \reals_{\ne 0}$,
equivalence classes of Lebesgue measurable solutions $\bff$ of \eqref{Phisystem}
are unique up to multiplication by constant scalars.
\end{lemma}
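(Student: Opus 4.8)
The plan is to extract the uniqueness statement from the constructive argument already carried out in the proof of Lemma~\ref{lemma:Comega}. Suppose $\bff = (f_1,f_2,f_3)$ and $\bff' = (f'_1,f'_2,f'_3)$ are two Lebesgue measurable solutions of \eqref{Phisystem} in a connected open set $\Omega \subset \reals^2\times\reals_{\ne 0}$, and suppose $\bff'$ is nontrivial. By Lemma~\ref{lemma:Comega} each $f_j\circ\Phi_j$ and $f'_j\circ\Phi_j$ agrees a.e.\ with a $C^\omega$ function, so after discarding a null set we may assume all six functions are real analytic on $\Omega$; it then suffices to prove the statement for $C^\omega$ solutions. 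Since $\bff'$ is nontrivial and the $\varphi_j$ are submersions with pairwise transverse gradients, at least one of $f'_1\circ\Phi_1$, $f'_2\circ\Phi_2$, $f'_3\circ\Phi_3$ is not identically zero; by connectedness and real analyticity it vanishes only on a proper analytic subvariety, so on the complement — a dense open connected subset $\Omega_0$ — we may relabel so that $f'_1\circ\Phi_1$ vanishes nowhere. Since it suffices to prove uniqueness on $\Omega_0$ and then extend by analytic continuation, we restrict attention to $\Omega_0$.

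The key step is the relation \eqref{fifj}: solving the linear system \eqref{Phisystem} of two scalar equations in the three quantities $f_k\circ\Phi_k$ using transversality of $\nabla\varphi_1,\nabla\varphi_2,\nabla\varphi_3$ pairwise, one obtains, for $j=2,3$,
\begin{equation}
(f_1\circ\Phi_1)(x,s) = h_j(x)\,(f_j\circ\Phi_j)(x,s),
\qquad
(f'_1\circ\Phi_1)(x,s) = h_j(x)\,(f'_j\circ\Phi_j)(x,s),
\end{equation}
with the \emph{same} $C^\omega$ coefficients $h_j$, since $h_j$ depends only on the datum $(\varphi_k: k\in\four)$. Set $c = f_1\circ\Phi_1 / f'_1\circ\Phi_1$, a well-defined real analytic function on $\Omega_0$ because the denominator vanishes nowhere there. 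Dividing the two displayed relations gives $f_j\circ\Phi_j = c\cdot(f'_j\circ\Phi_j)$ on $\Omega_0$ for $j=2,3$ as well (wherever $f'_j\circ\Phi_j\ne 0$; and where $f'_j\circ\Phi_j=0$ one gets $f_j\circ\Phi_j=0$ directly from \eqref{fifj}, so the identity $f_j\circ\Phi_j = c(f'_j\circ\Phi_j)$ holds everywhere on $\Omega_0$). Thus $\bff = c\cdot\bff'$ on $\Omega_0$ with $c$ a single scalar function.

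It remains to show $c$ is constant. Here I use the internal structure of $\Phi_j$ and the argument of Lemma~\ref{lemma:Comega}: the function $f_1$ on its two-dimensional domain is recovered from its values along the $C^\omega$ curves $\Phi_1(\bar\Gamma)$ and $\Phi_1(\Gamma')$ (images of level curves of $\Phi_2$, $\Phi_3$), which by auxiliary hypothesis~\ref{auxhyp2} are transverse. Along a level curve $\Gamma$ of $\Phi_2$ the quantity $f_2\circ\Phi_2$ is constant (it is a function of $\Phi_2$ alone), so $f_1\circ\Phi_1 = h_2\cdot(f_2\circ\Phi_2)$ shows $f_1$ restricted to $\Phi_1(\Gamma)$ equals a fixed $C^\omega$ function times a constant, and the same for $f'_1$ with a possibly different constant; hence $c$ is constant along each curve $\Phi_1(\Gamma)$. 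By symmetry (using level curves of $\Phi_3$) $c$ is also constant along each curve $\Phi_1(\Gamma')$. Since these two families of curves are transverse and their union of tangent directions spans at each point, the real analytic function $c$ has vanishing gradient, hence is locally constant, hence constant on the connected set $\Omega_0$; pulling back through $\Phi_1$ this says $f_1 = c\,f'_1$ as functions on $\reals^1$, and likewise $f_j = c\,f'_j$, for a single scalar $c$. Finally, analytic continuation across the proper subvariety $\Omega\setminus\Omega_0$ (using that $\bff - c\bff'$ is real analytic and vanishes on a nonempty open set) gives $\bff = c\,\bff'$ throughout $\Omega$. The main obstacle is the last step — ensuring $c$ is genuinely a \emph{constant} rather than merely a scalar function — and this is exactly where the transversality of the curve families $\Phi_1(\Gamma)$, $\Phi_1(\Gamma')$ furnished by hypothesis~\ref{auxhyp2} is essential; everything else is bookkeeping already present in the proof of Lemma~\ref{lemma:Comega}.
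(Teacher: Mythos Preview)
Your proof is correct and follows essentially the same approach as the paper, which simply remarks that the proof of Lemma~\ref{lemma:Comega} already shows the local solution space is at most one-dimensional; you have made that implicit argument explicit by forming the ratio $c$ and using the same transverse curve families $\Phi_1(\Gamma)$, $\Phi_1(\Gamma')$ to show $c$ is constant. One small slip: the $f_j$ are functions on open subsets of $\reals^2$ (the targets of the $\Phi_j$), not $\reals^1$, so the phrase ``as functions on $\reals^1$'' near the end should read $\reals^2$.
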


\begin{comment}
\medskip
If no solution exists, then I hope to apply the reasoning of \cite{triosc}
to conclude that sublevel sets must be small --- provided that
the frequencies $k_j(\cdot,s)$ 
are not too small.
If they are small, then the $\flat/\sharp$ decomposition comes into
play, we get a system of two sublevel type inequalities in four 
unknown functions. Using one to eliminate one function,
we arrive at a single scalar sublevel set inequality,
with variable coefficients, in three unknown functions.
Now I have a satisfactory theorem about that situation,
so the proof would be completed.
\end{comment}

\begin{lemma} \label{lemma:sigma}
There exists a unique exponent $\sigma\in\reals$ 
with the following property. Let $\bff$ be
any Lebesgue measurable solution of \eqref{Phisystem} 
in some connected open subset $S$ of $\reals^2\times \reals^+$
that does not vanish identically. Then $f_j$ takes the form
\begin{equation} \label{multform}
f_j(x,s) = s^\sigma F_j(x)  
\ \text{ almost everywhere in $\varphi_j(S)$}
\ \forall\,j\in\three  
% \in\reals\times\reals^+
\end{equation}
for some $F_j\in C^\omega$.
% The ordered triple $(F_j: j\in\three)$ is unique up to constant scalar multiples.
% ?? This last sentence is completely obvious
\end{lemma}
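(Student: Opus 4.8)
The plan is to exploit the $s$-homogeneity of the functional equation \eqref{Phisystem}, together with the near-uniqueness of its solutions (Lemma~\ref{lemma:uniquebff}) and the real-analytic regularity furnished by Lemma~\ref{lemma:Comega}. By Lemma~\ref{lemma:Comega} I may replace each $f_j$ by a real-analytic representative, which alters $f_j$ only on a Lebesgue-null subset of its two-dimensional domain (since $\Phi_j$ is a submersion, so preimages of null sets are null), so the ``almost everywhere'' conclusion is unaffected; henceforth $f_j\in C^\omega$ and $f_j\circ\Phi_j\in C^\omega$. Fix a nonvanishing solution $\bff$ on a connected open $S\subset\reals^2\times\reals^+$, and for $r\in\reals^+$ let $\bff_r$ be the triple obtained from $\bff$ by rescaling the second argument of each $f_j$ by $r$; the homogeneity observation preceding the lemma shows that $\bff_r$ solves \eqref{Phisystem} on $S_r:=\{(x,s):(x,rs)\in S\}$. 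Pick $(\barx,\bars)\in S$ with $\bars>0$ and a ball $\Omega_0\ni(\barx,\bars)$ with $\overline{\Omega_0}\subset S$; a compactness argument gives $\eta>0$ such that $\bff_r$ is defined on all of the connected set $\Omega_0$ whenever $|r-1|<\eta$.

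Since $\bff$ is nonvanishing on a dense subset of $\Omega_0$, Lemma~\ref{lemma:uniquebff} provides a unique scalar $c(r)\ne0$ with $\bff_r=c(r)\,\bff$ on $\Omega_0$ for $|r-1|<\eta$. Dividing the value of $f_j\circ\Phi_j$ at a dilated point by its value at a fixed point where it is nonzero exhibits $c$ as a real-analytic function of $r$ near $1$, and the identity $\bff_{r_1r_2}=(\bff_{r_2})_{r_1}$ together with Lemma~\ref{lemma:uniquebff} yields the cocycle relation $c(r_1r_2)=c(r_1)c(r_2)$. Differentiating in $r_1$ at $1$ gives $r\,c'(r)=c'(1)\,c(r)$ with $c(1)=1$, so $c(r)=r^\sigma$ with $\sigma:=c'(1)$, and the cocycle relation propagates this to all $r>0$; $\sigma$ is real because $c>0$ near $1$ (after, if necessary, reducing to real-valued solutions by passing to real and imaginary parts, each of which solves \eqref{Phisystem}, and invoking Lemma~\ref{lemma:uniquebff}). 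Now $\bff_r=r^\sigma\bff$ says that $u\mapsto u^{-\sigma}f_j(\,\cdot\,,u)$ is locally constant in its second variable $u$ on $\Phi_j(\Omega_0)$ — where $u=sD\varphi_j$ has fixed sign, since $D\varphi_j$ vanishes nowhere — hence equals a real-analytic function $F_j$ of the first variable alone; as $f_j\circ\Phi_j\in C^\omega$ on the connected open set $\Phi_j(S)$, the identity $\partial_u(u^{-\sigma}f_j)\equiv0$ propagates throughout $\Phi_j(S)$ by analytic continuation. This yields the form \eqref{multform} with an exponent $\sigma=\sigma(\bff,S)$, the sign conventions in $(D\varphi_j)^\sigma$ being absorbed into $F_j$.

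It remains to show that $\sigma$ depends neither on $\bff$ nor on $S$. On a fixed connected $S$, Lemma~\ref{lemma:uniquebff} makes any two nonvanishing solutions scalar multiples of one another, and scaling preserves the exponent. For independence of $S$, insert $f_j=s^\sigma F_j$ into \eqref{Phisystem} and cancel $s^\sigma$, reducing it to $\sum_{j=1}^3(D\varphi_j)^\sigma(F_j\circ\varphi_j)\,\nabla\varphi_j\equiv0$ on the $x$-projection of $S$. The transversality hypothesis~\ref{transversehyp} makes $\nabla\varphi_1,\nabla\varphi_2,\nabla\varphi_3$ obey a unique-up-to-scalar linear relation $\sum_j\beta_j\nabla\varphi_j=0$ with nowhere-vanishing analytic coefficients $\beta_j$ determined by $\Pphi$, which forces $(D\varphi_j)^\sigma(F_j\circ\varphi_j)=p(x)\,\beta_j(x)$ for a single scalar function $p$; eliminating $p$ expresses $F_1\circ\varphi_1$ and $F_2\circ\varphi_2$ as analytic (and defined on all of $B$) multiples of $F_3\circ\varphi_3$, and forces $F_3$ to satisfy a first-order linear ODE whose coefficient is analytic on the interval $\varphi_3(B)$. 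Hence every nonvanishing solution extends to a nonvanishing solution on all of $B\times\reals^+$; two such extensions are scalar multiples of one another there by Lemma~\ref{lemma:uniquebff}, and comparing $s^{\sigma_1}F_j^{(1)}(x)=c\,s^{\sigma_2}F_j^{(2)}(x)$ at a point with $F_j^{(1)}\ne0$ forces $\sigma_1=\sigma_2$.

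The step I expect to be the main obstacle is this last globalization: making rigorous that a nonvanishing local solution propagates to one on all of $B\times\reals^+$. This rests on verifying that the compatibility conditions for the ODE obeyed by $F_3$ — that the relevant analytic coefficient be constant along the level curves of $\varphi_3$, and likewise for the relations determining $F_1$ and $F_2$ — hold throughout $B$ once they hold on a nonempty open subset, which follows from the identity theorem for real-analytic functions. Some care is also needed in the dilation step to keep every relevant open set connected so that Lemma~\ref{lemma:uniquebff} may be applied, and in tracking the sign of $(D\varphi_j)^\sigma$ when $D\varphi_j<0$. I do not expect the auxiliary hypotheses (in particular Hypothesis~\ref{auxhyp3}) to be needed for this lemma; they enter later, in connection with whether a nonvanishing solution with exponent $\sigma\ne1$ can actually occur.
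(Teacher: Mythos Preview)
Your argument is correct and follows the same route as the paper's: dilate in $s$, invoke Lemma~\ref{lemma:uniquebff} to obtain a multiplicative scalar $c(r)$ (the paper calls it $h(r)$), and deduce $c(r)=r^\sigma$. The paper's proof is much terser---it simply asserts that multiplicativity ``forces $h$ to take the form $c|r|^\sigma$'' and stops there, without your differentiation argument and without explicitly addressing why $\sigma$ is independent of the domain $S$. Your globalization step (reducing to an analytic first-order ODE for $F_3$ and propagating by the identity theorem) is sound; the paper leaves this implicit, presumably regarding it as already contained in the proof of Lemma~\ref{lemma:Comega}, where the solution is reconstructed from data depending only on $\Pphi$ and a single scalar, so that restrictions of the canonical solution to different subdomains are automatically compatible.
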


This lemma can be applied equally well to solutions in $\reals^2\times\reals^-$,
with $s^\sigma$ replaced by $|s|^\sigma$ in \eqref{multform}
since whenever $\bff(x,s)$ is a solution, so is $\bff(x,-s)$.
Thus the exponent $\sigma$ is the same for $s<0$ as for $s>0$.

\begin{proof}
If $\bff(x,s)$ is a solution then so is
$\bff(x,rs)$ for any $0\ne r\in\reals$.
Therefore by the uniqueness established in Lemma~\ref{lemma:uniquebff},
$\bff(x,rs)=h(r)\bff(x,s)$ for some function $h$. 
Since $\bff$ does not vanish identically,
%If $\bff$ vanishes identically then
%the desired conclusion holds with $F_j\equiv 0$.
%Otherwise, 
this relation implies that $h(r_1r_2)\equiv h(r_1)h(r_2)$
for $r_1,r_2$ in appropriate intervals. This forces
$h$ to take the form $c|r|^\sigma$ in any interval in which it is defined.
\end{proof}

Recalling that $D\varphi_j = V_4\varphi_j$ vanishes nowhere by the transversality hypothesis, 
we may assume that $D\varphi_j(x)>0$ for every $x$ for each index $j$, by
replacing $\varphi_j$ by $-\varphi_j$ if necessary. 
Working in the region in which $s>0$,
and writing $F_j(x)s^\sigma$ in place of $f_j(x,s)$,
the functional equation \eqref{Phisystem} can now be rewritten as
\begin{equation} \label{feqn2}
\sum_{j=1}^3 (D\varphi_j)^\sigma \cdot (F_j\circ\varphi_j)
\cdot \nabla\varphi_j =0.
\end{equation}
Here the first and second factors are real-valued, while the third
is $\reals^2$--valued, and the symbol $\cdot$ denotes multiplication of scalars
with scalars, or with vectors.

\begin{lemma} \label{lemma:sigma_not_0}
If there exists a solution $(F_1,F_2,F_3)$ of \eqref{feqn2}
that does not vanish identically in some nonempty open set in $\reals^2$, 
then the exponent $\sigma$ cannot equal $0$.
\end{lemma}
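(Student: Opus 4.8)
The plan is to argue by contradiction: suppose a solution $(F_1,F_2,F_3)$ of \eqref{feqn2} exists that does not vanish identically on some nonempty open set, which after passing to a connected component we may take to be a connected open $U\subset\reals^2$, and suppose moreover that $\sigma=0$. Since we have arranged that $D\varphi_j>0$ for each $j\in\three$, the weight $(D\varphi_j)^\sigma$ is identically $1$ when $\sigma=0$, so \eqref{feqn2} collapses to the purely additive vector identity
\[ \sum_{j=1}^3 (F_j\circ\varphi_j)(x)\,\nabla\varphi_j(x)=0 \qquad (x\in U). \]
By Lemma~\ref{lemma:Comega} (equivalently, by the form asserted in Lemma~\ref{lemma:sigma}) we may take each $F_j$ to be real analytic on the interval $\varphi_j(U)$.

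Next I would extract the $x_2$-component of this vector identity. Because the coordinates were chosen so that $\varphi_4(x_1,x_2)\equiv x_1$, the operator $D=\partial/\partial x_2$ annihilates $\varphi_4$, and the second component of the identity reads $\sum_{j=1}^3 D\varphi_j\cdot(F_j\circ\varphi_j)=0$ in $U$. This is precisely the equation whose only real analytic solution is the trivial one, as was already verified in the proof of Proposition~\ref{prop:sharp} when checking the main hypothesis of Theorem~\ref{thm:sublevel}. For completeness the argument runs as follows: choose real analytic antiderivatives $G_j$ with $G_j'=F_j$; then $\partial_{x_2}\bigl(\sum_{j=1}^3 G_j\circ\varphi_j\bigr)=\sum_{j=1}^3 (F_j\circ\varphi_j)\,D\varphi_j\equiv 0$, so $\sum_{j=1}^3 G_j\circ\varphi_j$ is independent of $x_2$, hence of the form $-G_4\circ\varphi_4$ for a real analytic function $G_4$ of one variable. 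Thus $\sum_{j=1}^4 G_j\circ\varphi_j\equiv 0$ on $U$, and Hypothesis~\ref{mainhypothesis} forces each $G_j$ to be constant on $U$. Consequently $F_j=G_j'\equiv 0$ for every $j\in\three$.

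This contradicts the assumption that $(F_1,F_2,F_3)$ does not vanish identically on $U$ (if the reasoning was carried out only on a subneighborhood, analyticity, or the uniqueness from Lemma~\ref{lemma:uniquebff}, propagates the vanishing). Hence $\sigma\ne 0$, which is the assertion of the lemma.

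I do not expect a substantial analytic obstacle here; the whole content is the observation that at $\sigma=0$ the system degenerates to an additive one whose second scalar component is handled by the already-established consequence of Hypothesis~\ref{mainhypothesis}. The only point needing a little care is the regularity bookkeeping: confirming via Lemma~\ref{lemma:Comega} that the $F_j$ are real analytic so that $C^\omega$ antiderivatives exist and Hypothesis~\ref{mainhypothesis} applies to the four-tuple $(G_1,G_2,G_3,G_4)$, and noting that it is the antiderivatives $G_j$ that are forced to be constant, so that $F_j=G_j'$ genuinely vanishes rather than merely being constant.
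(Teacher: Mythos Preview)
Your proof is correct and follows essentially the same route as the paper: set $\sigma=0$, reduce \eqref{feqn2} to $\sum_{j=1}^3 (F_j\circ\varphi_j)\nabla\varphi_j=0$, pass to antiderivatives $G_j$, produce a fourth function $G_4$ so that $\sum_{j=1}^4 G_j\circ\varphi_j\equiv 0$, and invoke Hypothesis~\ref{mainhypothesis}. The only (cosmetic) difference is that you extract the $x_2$--component and conclude that $\sum_{j=1}^3 G_j\circ\varphi_j$ depends only on $x_1$, whence $G_4$ is a genuine function of one variable; the paper instead uses the full vector identity, which is exactly $\nabla\bigl(\sum_{j=1}^3 G_j\circ\varphi_j\bigr)=0$, so the sum is locally constant and $G_4$ can be taken to be a constant. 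Either way the main hypothesis forces each $G_j$ constant and hence each $F_j\equiv 0$.
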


\begin{proof}
Let $(F_1,F_2,F_3)$ be a solution. Choose
$G_j$ to be an antiderivative  of $F_j$
for each $j\in\three$. If $\sigma=0$ then 
each factor $(D\varphi_j)^\sigma$ is constant, so 
\eqref{feqn2} can be written as
$\nabla\big(\sum_{j=1}^3 (G_j\circ\varphi_j)\big)\equiv 0$
in a nonempty open subset of $\reals^2$.
Thus $\sum_{j=1}^3 (G_j\circ\varphi_j)$ is locally constant.

Choosing $G_4$ to be an appropriate constant, 
$\sum_{j=1}^4 (G_j\circ\varphi_j)$ vanishes identically.
By the main hypothesis, all $G_j$ must then be locally constant.
Therefore their derivatives $F_j$ vanish identically.
\end{proof}

The discussion so far leaves open the question of whether there exists any solution 
besides the trivial solution $\bff\equiv 0$.
The auxiliary hypothesis \ref{auxhyp1} of Theorem~\ref{maintheorem} excludes nonzero
solutions with $\sigma=1$, but not with other exponents.

\subsection{Approximate solutions} 

\begin{lemma} \label{lemma:approxsolns1}
Suppose that there is no nonzero $C^\omega$ solution
of \eqref{Phisystem} in any nonempty open subset of $\reals^3$.
Then there exists $\kappa>0$ with the following property.
Let $I\subset\reals\setminus\{0\}$ be a compact interval.
Let $B\subset\reals^2$ be a ball.
Let $\eps\in(0,1]$.
Let $\bff = (f_j(x,s): j\in\three)$
be an ordered triple of Lebesgue measurable functions.
Suppose that there exists $i\in\three$
for which $|f_i(x,s)|\ge 1$ for every $(x,s)\in B\times I$.
Then
\begin{equation} \label{sublevelbound1}
\big|\{ (x,s)\in B\times I: 
\big| \sum_{j=1}^3 (f_j\circ\Phi_j)(x,s) \,\nabla\varphi_j(x) \big| \le\eps \}\big|
\le C\eps^\kappa.
\end{equation}
\end{lemma}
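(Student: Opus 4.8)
The plan is to deduce the uniform sublevel set bound from the qualitative nonexistence of nonzero $C^\omega$ solutions of \eqref{Phisystem} by a compactness-and-contradiction argument, combined with the quantitative structure provided by Lemmas~\ref{lemma:Comega}, \ref{lemma:uniquebff}, and \ref{lemma:sigma}. The essential point is that the functional equation \eqref{Phisystem} is, via Lemma~\ref{lemma:sigma}, equivalent after the substitution $f_j(x,s) = s^\sigma F_j(x)$ to the $s$-independent equation \eqref{feqn2}, namely $\sum_{j=1}^3 (D\varphi_j)^\sigma (F_j\circ\varphi_j)\nabla\varphi_j \equiv 0$, which is a system of two scalar equations in the three scalar unknowns $F_j$ with real-analytic coefficients $(D\varphi_j)^\sigma$. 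So the hypothesis ``no nonzero $C^\omega$ solution of \eqref{Phisystem}'' forces, for \emph{every} $\sigma\in\reals$, that \eqref{feqn2} has no nonzero real-analytic local solution; this is exactly the kind of hypothesis under which one expects a quantitative sublevel bound with variable coefficients --- the analogue of Theorem~\ref{thm:sublevel} but now for the three-function system \eqref{feqn2}.

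First I would reduce \eqref{sublevelbound1} to a sublevel estimate for a system not involving $s$. Fix the interval $I$; by rescaling $s$ (using homogeneity of \eqref{Phisystem} in $s$) and covering $I\times B$ by finitely many pieces, we may treat $s$ as ranging over a fixed compact subinterval of $\reals^+$. On such a region the factors $s^{\pm 1}$ and $(D\varphi_j)^{\pm 1}$ are comparable to constants, so up to harmless adjustments of $\eps$ the set in \eqref{sublevelbound1} is contained in a product (over the relevant range of $s$) of $x$-sublevel sets of the form $\{x\in B: |\sum_j b_j(x)(g_j\circ\varphi_j)(x)|\le C\eps\}$ with $|g_i(x)|\ge c>0$ for one index $i$, where $b_j$ are fixed positive real-analytic functions and $g_j(x) = f_j(\Phi_j(x,s))$. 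Thus it suffices to prove a uniform bound $|\{x\in B: |\sum_{j=1}^3 b_j (g_j\circ\varphi_j)|\le\eps\}|\le C\eps^\kappa$, uniformly over all Lebesgue measurable triples $(g_j)$ with $|g_i|\ge 1$ somewhere-normalized. This is precisely an instance of the three-function, variable-coefficient sublevel theorem (Theorem~\ref{thm:sublevel}), provided its nondegeneracy hypotheses hold; I would verify those hypotheses from the main hypothesis \ref{mainhypothesis} (as was done in the proof of Proposition~\ref{prop:sharp}, by passing to antiderivatives and appending a fourth term $g_4\circ\varphi_4$) and from auxiliary hypotheses~\ref{auxhyp1}--\ref{auxhyp3}.

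The one subtlety is that the coefficients $b_j$ here are the $\sigma$-th powers $(D\varphi_j)^\sigma$ (together with a factor $s^\sigma$ that has been normalized away), and the relevant exponent $\sigma$ is not known in advance --- it is the one produced by Lemma~\ref{lemma:sigma}, which presupposes existence of a nonzero solution. So the cleanest route is the contradiction argument: if no uniform bound held, there would be sequences $\eps_\nu\to 0$ and triples $\bff_\nu$ with $|\{\,\cdot\,\}|\ge\nu\eps_\nu^\nu$; normalizing and using that \eqref{Phisystem} holds approximately on a set of definite measure, one extracts (via the rigidity of Lemma~\ref{lemma:uniquebff}, which says the solution space is at most one-dimensional, plus the real-analytic regularity of Lemma~\ref{lemma:Comega}) a limiting nonzero $C^\omega$ solution of \eqref{Phisystem}, contradicting the hypothesis. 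I expect \textbf{the main obstacle} to be making this compactness step rigorous: extracting a genuine limiting solution from measurable near-solutions requires propagating the approximate equation along the two transverse foliations of $\Phi_2$ and $\Phi_3$ (as in Lemma~\ref{lemma:Comega}) with quantitative control, so that one can pass to a limit and land on an honest solution rather than merely an approximate one. An alternative that sidesteps the limiting argument is to prove the variable-coefficient three-function sublevel bound directly for each fixed $\sigma$ by the method of Theorem~\ref{thm:sublevel} and then note that the constant $C$ and exponent $\kappa$ depend only on finitely many derivatives of the $b_j$ on a compact set, hence can be taken uniform over $s\in I$; I would likely present this direct route, invoking \cite{sublevel4} for the underlying estimate, and use the contradiction argument only as motivation.
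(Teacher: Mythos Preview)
Your reduction to Theorem~\ref{thm:sublevel} does not work as written. For fixed $s$, the composition $x\mapsto f_j(\Phi_j(x,s)) = f_j(\varphi_j(x),\,sD\varphi_j(x))$ is \emph{not} of the form $g_j(\varphi_j(x))$ for a one-variable function $g_j$, because the second argument $sD\varphi_j(x)$ is generally not a function of $\varphi_j(x)$ alone. So the sublevel set in \eqref{sublevelbound1} cannot be sliced into two-dimensional sublevel sets of the type $\{x:|\sum_j b_j(x)(g_j\circ\varphi_j)(x)|\le\eps\}$ to which Theorem~\ref{thm:sublevel} applies. The problem genuinely lives in three dimensions, with the maps $\Phi_j:\reals^3\to\reals^2$ playing the role that $\varphi_j:\reals^2\to\reals^1$ plays in Theorem~\ref{thm:sublevel}. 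Your appeal to the exponent $\sigma$ from Lemma~\ref{lemma:sigma} is also misplaced here: that lemma produces $\sigma$ only in the presence of a nonzero solution, while the hypothesis of Lemma~\ref{lemma:approxsolns1} is precisely that no such solution exists.

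The paper's argument follows neither of your two routes. It first reduces (by a dyadic pigeonhole) to the normalized case $|f_j|\le 2$ for all $j$. Then, rather than freezing $s$, it quantifies the propagation argument underlying Lemmas~\ref{lemma:Comega} and \ref{lemma:uniquebff}: using the two transverse foliations by level curves of $\Phi_2$ and $\Phi_3$ (which is where auxiliary hypothesis~\ref{auxhyp2} enters), together with the machinery of \cite{triosc}, \cite{sublevel4}, \cite{CDR}, one obtains fixed $C^\omega$ functions $g_j$ (independent of $\bff$), bounded scalars $a_j\in[-3,3]$, and a subset $\tilde\scripte$ of the sublevel set with $|\tilde\scripte|\gtrsim|\scripte|^C$ on which $|f_j\circ\Phi_j - a_j\,g_j\circ\Phi_j| = O(\eps)$. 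Since the one-parameter family $(a_jg_j)$ is a compact $C^\omega$ family containing no exact solution of \eqref{Phisystem} (by hypothesis), Lemma~11.2 of \cite{triosc} gives a uniform sublevel bound for it, whence $|\tilde\scripte| = O(\eps^\gamma)$ and thus $|\scripte| = O(\eps^{\gamma/C})$. This is constructive, not a contradiction argument, and it stays in the three-dimensional setting throughout.
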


\begin{proof}
Let $\bff$ satisfy the hypotheses.
%Since the hypotheses are invariant under permutation of the indices
%in $\three$, we may assume that $i=1$.
It suffices to prove the conclusion in the special case in which
$|f_j(x,s)|\le 2$ for every $j\in\three$ and every $(x,s)\in B\times I$.
Indeed, partition the domain into sets $\scripta_{l,k}$
in which there exists $l$ such that $|f_l(x,s)|\in[2^k,2^{k+1})$
and $|f_j(x,s)|\le |f_l(x,s)|$ for every $j\ne l$.
Apply the special case to $2^{-k}f_j(x,s)$
with $\eps$ replaced by $2^{-k}\eps$
to conclude that $|\scripta_{l,k}| = O(2^{-k}\eps)^\kappa$.
Sum these bounds over all nonnegative integers $k$ and over $l\in\three$
to complete the proof.

Since the hypotheses are invariant under permutation of the indices
in $\three$, we may assume that the index $i$ in the hypotheses is $1$.
Let 
\begin{equation} \scripte = \big\{ (x,s)\in B\times I: 
\big| \sum_{j=1}^3 (f_j\circ\Phi_j)(x,s) \,\nabla\varphi_j(x) \big| \le\eps
\big\}.  \end{equation}
The proof of Lemmas~\ref{lemma:Comega} and \ref{lemma:uniquebff} above,
in combination with reasoning in \cite{triosc}, \cite{sublevel4}, and \cite{CDR},
demonstrates that there exist $C^\omega$ functions $g_j$
that are independent of $\bff$,
a set $\tilde\scripte\subset\scripte$ satisfying
$|\tilde\scripte|\gtrsim|\scripte|^C$,
and scalars $a_j\in[-3,3]$ that may depend on $\bff$, satisfying
\begin{equation} \label{approxbffbyComega}
\big| f_j(\Phi_j(x,s))-a_j g_j(\Phi_j(x,s))
\big| = O(\eps)\qquad \forall\,j\in\three,\ \forall\,(x,s)\in\tilde\scripte.
\end{equation}
See for instance the derivation of relation (6.20) of \cite{sublevel4}
and the discussion in \S4 of \cite{sublevel4} on which that derivation is based.

The family of tuples 
$G_{\ba} = (a_j g_j: j\in\three)$ parametrized by $\ba = (a_1,a_2,a_3)\in
[-3,3]^3$ is a compact $C^\omega$ family of $C^\omega$ functions
in the sense hypothesized in Lemma~11.2 of \cite{triosc}. 
That is, $\ba$ varies over a compact subset of a Euclidean space,
$y =(x,s)$ varies over a compact convex subset of a Euclidean space,
$G_\ba(y)$ is $\complex^n$--valued for some $n$, and
the mapping $(y,\ba)\mapsto G_\ba(y)$ is defined and real analytic in a neighborhood of this domain.
Lemma~11.2 of \cite{triosc} states\footnote{The result in \cite{triosc} is stated and
proved for scalar linear equations, but the same reasoning applies
to finite-dimensional linear systems.} that if such a family contains no exact solution to 
a system such as $\sum_j a_j (g_j\circ\Phi_j) \nabla\varphi_j\equiv 0$
in any nonempty open set, then 
\eqref{sublevelbound1} holds for some exponent $\kappa$ 
for all $G_\ba$ in the family, uniformly in $\ba$.
Since $\tilde\scripte\subset\scripte$, and since the inequality in question
holds on $\scripte$,  
the inequality \eqref{approxbffbyComega} implies that
\begin{equation} 
\big| \sum_{j=1}^3 a_j (g_j\circ\Phi_j)(x,s) \,\nabla\varphi_j(x) \big| =O(\eps)
\ \forall\,(x,s)\in\tilde\scripte.
\end{equation}
Therefore $|\tilde\scripte| = O(\eps^\gamma)$ for some exponent
$\gamma>0$ that depends only on $\Pphi$.
Therefore $|\scripte| = O(\eps^{\gamma/C})$.
\end{proof}

\begin{lemma} \label{lemma:approxsolns2}
Under the hypotheses of Lemma~\ref{lemma:approxsolns1},
there exists $\kappa>0$ such that
for any $r>0$,
\begin{equation} \label{sublevelbound2}
\big|\{ (x,s)\in \reals^2\times [-r,r]:
\big| \sum_{j=1}^3 (f_j\circ\Phi_j)(x,s) \,\nabla\varphi_j(x) \big| \le\eps \}\big|
\le C\eps^\kappa r.
\end{equation}
\end{lemma}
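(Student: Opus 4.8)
The plan is to deduce the global bound \eqref{sublevelbound2} from the local bound \eqref{sublevelbound1} by exploiting the scaling homogeneity of the functional equation \eqref{Phisystem} in the $s$-variable, together with a covering argument in the $x$-variable. First I would recall that each mapping $\Phi_j(x,s) = (\varphi_j(x), sD\varphi_j(x))$ is homogeneous of degree one in $s$: replacing $s$ by $rs$ multiplies the second component by $r$. Consequently, if $\bff = (f_j(x,s))$ is any triple of Lebesgue measurable functions, then the rescaled triple $\bff_r(x,s) := \bff(x,rs)$ satisfies $\sum_j (f_{r,j}\circ\Phi_j)(x,s)\nabla\varphi_j(x) = \sum_j (f_j\circ\Phi_j)(x,rs)\nabla\varphi_j(x)$, so the sublevel set for $\bff$ at scale $r$ in $s$ is the image under $s\mapsto s/r$ of the sublevel set for $\bff_r$ at unit scale in $s$. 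The Jacobian of this dilation contributes exactly the factor $r$ appearing on the right-hand side of \eqref{sublevelbound2}.

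The second ingredient is to reduce the unbounded range $x\in\reals^2$ to the fixed ball $B$ of Lemma~\ref{lemma:approxsolns1}. Here I would cover $\reals^2$ (or rather the bounded region where $\eta$ is supported, which is all that matters for the application, though the statement as written is for all of $\reals^2$ — one uses that the functions $\varphi_j$ and the hypothesis are only meaningful on $\tilde B$, so after the reductions in the main argument the relevant $x$ lie in a fixed bounded set) by finitely many translates of $B$, or invoke the fact that $\Pphi$ and its hypotheses are posed on the bounded set $\tilde B$; a fixed finite number of balls suffices, with the number depending only on the geometry. On each such ball, after the rescaling $s\mapsto rs$ that normalizes the $s$-interval to a fixed compact interval $I\subset\reals\setminus\{0\}$, one applies Lemma~\ref{lemma:approxsolns1} to $\bff_r$ — noting that the lower bound hypothesis $|f_i(x,s)|\ge 1$ is preserved under the rescaling, since it is a pointwise condition — to obtain a bound $C\eps^\kappa$ for the unit-scale sublevel set. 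Undoing the dilation multiplies by $r$, and summing over the finitely many balls preserves the form of the bound.

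One technical point deserves care: Lemma~\ref{lemma:approxsolns1} requires the $s$-interval $I$ to be a compact subinterval of $\reals\setminus\{0\}$, i.e. to be bounded away from $0$, whereas \eqref{sublevelbound2} allows $s$ to range over all of $[-r,r]$, which includes a neighborhood of $s=0$. I expect this to be the main obstacle. The resolution is to split $[-r,r]$ dyadically: write $[-r,r]\setminus\{0\} = \bigcup_{k\ge 0}\{2^{-k-1}r\le |s|\le 2^{-k}r\}$, apply the rescaled version of Lemma~\ref{lemma:approxsolns1} on each annulus (each of which, after dilation by $2^k/r$, becomes the fixed compact interval $\{1/2\le|s|\le 1\}\subset\reals\setminus\{0\}$), pick up a bound $C\eps^\kappa\cdot 2^{-k}r$ on the $k$th piece, and sum the geometric series $\sum_k 2^{-k}r = 2r$. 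The slice $\{s=0\}$ has measure zero and contributes nothing. This geometric summation is exactly why the linear factor $r$ — rather than something worse — appears, and why the exponent $\kappa$ is unchanged. Assembling the covering in $x$ with the dyadic decomposition in $s$ then yields \eqref{sublevelbound2} with a constant $C$ depending only on $\Pphi$, the ball, and $I$.
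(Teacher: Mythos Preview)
Your proposal is correct and matches the paper's proof exactly: rescale $s\mapsto rs$ to reduce to the fixed annulus $\tfrac12\le|s|\le 1$, then decompose $[-r,r]$ dyadically as $\bigcup_{n\ge 0}\{2^{-n-1}r\le|s|\le 2^{-n}r\}$ and sum the resulting geometric series $\sum_n 2^{-n}r$. The covering argument in $x$ is unnecessary --- the $x$-domain is already the bounded ball $B$ (the ``$\reals^2$'' in the statement is a mild abuse, since $\varphi_j$ is only defined on $\tilde B$) --- but this is harmless.
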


\begin{proof}
To obtain this upper bound for the set of all $(x,s)$
satisfying $\tfrac12 r\le |s|\le r$,
apply Lemma~\ref{lemma:approxsolns1} 
to the functions $(x,s)\mapsto f_j(x,rs)$.
Then apply this partial result with $r$
replaced by $2^{-n}r$ for each $n\in\naturals$, and sum over $n$.
\end{proof}

%Uniformly in $\lambda,k$ for any $k\in\integers$ satisfying $2^{-k}=O(\lambda^{-\gamma})$,
%\begin{equation} \label{sublevelbound3}
%\big|\{ (x,s): 2^{-k}\le |s|\le 2^{1-k} \ \text{ and } \ 
%\big| \sum_{j=1}^3 (f_j\circ\Phi_j)(x,s) \,\nabla\varphi_j(x) \big| \le\eps \}\big|
%\le C\eps^\kappa 2^{-k}.
%\end{equation}
%This follows by applying Lemma~\ref{lemma:approxsolns1}

%to the functions $(x,s)\mapsto f_j(x,2^{-k}s)$
%in the region $|s|\asymp 1$.
%Summing with respect to all $k$ satisfying $2^{-k} = O(\lambda^{-\gamma})$ completes the proof.
%\end{proof}

The next lemma is a variant of Lemma~\ref{lemma:approxsolns2},
concerning the case in which \eqref{Phisystem} does admit a nontrivial solution.
% marker
Let $B$ be as above.

\begin{lemma} \label{lemma:approxsolns3}
Suppose that there exist $\bF=(F_j: j\in\three)$,
defined in $B\times\reals^1_{\ne 0}$
and not vanishing identically in any nonempty open set,
and an exponent $\sigma\in\reals$, such that 
every $\reals^3$--valued exact solution $\bff$
of \eqref{Phisystem} in any nonempty connected open subset of $B\times\reals^\pm$
takes the form $f_j(y,s) = b |s|^\sigma F_j(y)$ for some coefficient $b\in\reals$
that depends on $\bff$. 
% Let $I\subset\reals^1_{\ne 0}$ be a closed bounded interval.
Then there exist $\kappa,c>0$ and $C_0<\infty$ with the following property.

Let $\bff = (f_j: j\in\three)$
be an ordered triple of Lebesgue measurable functions.
% Suppose that $|f_j(x,s)|\ge 1$ for every $(x,s)\in B\times I$.
Let $\eps\in(0,1]$. Let $S(\bff,\eps)$ be the sublevel set
\[ S(\bff,\eps) = \big\{(x,s)\in B\times (0,1]:
\big| \sum_{j=1}^3 (f_j\circ\Phi_j)(x,s) \,\nabla\varphi_j(x) \big| \le\eps \big\}.\]
Let $\bbS\subset S(\bff,\eps)$ be measurable.
Then either 
\begin{equation}
|\bbS|\le\eps^c
\end{equation}
or there exist $\bbS'\subset \bbS$ and $b\in\reals$ such that for each $j\in\three$,
\begin{equation} 
\big|\{ (y,s)\in \psi_j(\bbS'):
\big|f_j(y,s) - b |s|^\sigma F_j(y) \big| \le C_0\eps \}\big|
\gtrsim |S(\bff,\eps)|^{\kappa} \ \forall\,\eps>0.
\end{equation}
\end{lemma}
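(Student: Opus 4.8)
The plan is to treat Lemma~\ref{lemma:approxsolns3} as the analogue, for the case in which \eqref{Phisystem} does admit a nonzero solution, of Lemmas~\ref{lemma:approxsolns1} and \ref{lemma:approxsolns2}. The ingredients are the usual preliminary reductions --- bounding the $f_j$ and localizing dyadically in $s$ away from $\{s=0\}$ --- together with the structural approximation machinery assembled from \cite{triosc}, \cite{sublevel4}, and \cite{CDR}: the proofs of Lemmas~\ref{lemma:Comega} and \ref{lemma:uniquebff}, combined with the derivation of relation (6.20) of \cite{sublevel4} and the discussion in \S4 of \cite{sublevel4}, propagate an approximate solution along the pairwise transverse foliations defined by the $\Phi_j$ and produce universal ``profiles''.

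\emph{Reductions.} Partitioning the domain into sets on which a fixed index $l$ is dominant (so that $|f_l|\asymp 2^k$ and $|f_j|\le|f_l|$ for $j\ne l$) and rescaling the $f_j$ by $2^{-k}$, absorbing the factor into the coefficient $b$ of the conclusion, one may assume $|f_j|\le 2$ throughout with $|f_l|\asymp 1$ on the piece under consideration. Writing $(0,1]=\bigcup_{n\ge 1}[2^{-n},2^{-n+1}]$ and rescaling $s\mapsto 2^{-n}s$ --- which leaves the threshold $\eps$ unchanged, since $D\varphi_j$ vanishes nowhere, and absorbs a factor $2^{-n\sigma}$ into $b$ --- reduces matters to a fixed dyadic block, i.e.\ to $B\times I$ with $I\subset\reals^+$ a fixed compact interval bounded away from $0$. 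The part of $\bbS$ in $B\times(0,2^{-N}]$ has measure $O(2^{-N})$; choosing $N\sim c\log(1/\eps)$ it is $\le\tfrac12\eps^c$ and falls under the alternative $|\bbS|\le\eps^c$, while if $|\bbS|>\eps^c$, pigeonholing over the remaining $O(\log(1/\eps))$ blocks selects one block carrying mass $\gtrsim|\bbS|/\log(1/\eps)$; the conclusion on that block, transported back by the rescalings, gives the conclusion for $\bbS$ after a harmless adjustment of exponents.

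\emph{Main step on $B\times I$.} Apply the approximation machinery to the sublevel set $\{\,\big|\sum_j(f_j\circ\Phi_j)\,\nabla\varphi_j\big|\le\eps\,\}$. As in the proof of Lemma~\ref{lemma:approxsolns1} it produces fixed $C^\omega$ functions $g_j$ independent of $\bff$, a measurable $\bbS'\subset\bbS$ with $|\bbS'|\gtrsim|\bbS|^{C}$, and scalars $a_j\in[-3,3]$ with $\big|f_j\circ\Phi_j-a_j(g_j\circ\Phi_j)\big|=O(\eps)$ on $\bbS'$. The new point is that the profiles can be identified: feeding the exact solution $(|s|^\sigma F_j)$ supplied by the hypothesis into the same construction forces the error to vanish, so $g_j\circ\Phi_j$ equals a fixed nonzero scalar $b_0$ times $|s|^\sigma(D\varphi_j)^\sigma(F_j\circ\varphi_j)$ --- here Lemma~\ref{lemma:uniquebff} ensures the propagation closes up into the one-dimensional solution space of \eqref{Phisystem}. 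Substituting $f_j\circ\Phi_j\approx a_j(g_j\circ\Phi_j)$ into the defining inequality and using $|s|^\sigma\asymp 1$ on $I$, one finds $\big|\sum_j a_jb_0\,(D\varphi_j)^\sigma(F_j\circ\varphi_j)\,\nabla\varphi_j\big|=O(\eps)$ on $\bbS'$. Since \eqref{feqn2} has, by Lemma~\ref{lemma:uniquebff}, a one-dimensional space of solutions spanned by $(F_j)$, a quantitative rigidity estimate --- Lemma~11.2 of \cite{triosc} applied to the real analytic family $c\mapsto(cF_j)$, making essential use of the real analyticity of $\varphi_j$ and $F_j$ --- forces either $|\bbS'|=O(\eps^{\kappa'})$, hence $|\bbS|=O(\eps^{\kappa'/C})$ and we are in the alternative $|\bbS|\le\eps^c$, or $a_jb_0=b+O(\eps)$ for a scalar $b$ independent of $j$ (here one uses that $F_j\not\equiv 0$, so $|F_j|$ is bounded below away from a null set, to convert near-equality of the vector combination into near-equality of coefficients). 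In the latter case $f_j\circ\Phi_j=a_j(g_j\circ\Phi_j)+O(\eps)=(b/b_0)(g_j\circ\Phi_j)+O(\eps)$ on $\bbS'$, which is the asserted $O(\eps)$-approximation of $f_j$ by $b|s|^\sigma F_j$ on a subset of $\psi_j(\bbS')$ of the required size, for suitable $\kappa,c,C_0$.

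\emph{The main obstacle.} The crux is the step just sketched: extracting from the machinery of \cite{sublevel4}, \cite{triosc} both the universal profiles $g_j$ --- and the fact that, under the hypothesis, these are scalar multiples of the solution ansatz --- and the quantitative rigidity that upgrades ``$(a_jb_0)$ approximately solves the coefficient system on a set of positive measure'' to ``the $a_jb_0$ are nearly equal''. This requires checking that all hypotheses of the cited results hold (transversality \ref{transversehyp}, the auxiliary hypotheses, and real analyticity), and --- crucially --- that the exponents produced are uniform so that the final $\kappa$ and $c$ do not degenerate; keeping the error genuinely of size $O(\eps)$ through these steps and through the dyadic-in-$s$ recombination is the other point that needs care.
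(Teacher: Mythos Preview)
Your proposal is correct and matches the paper's approach; the paper's own proof is literally one sentence---``Lemma~\ref{lemma:approxsolns3} is proved by excerpting portions of the proofs of Lemmas~\ref{lemma:approxsolns1} and \ref{lemma:approxsolns2}''---so you have in fact supplied considerably more than the paper does.

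One remark on your ``rigidity'' step, where you force the three scalars $a_j$ to a common $b$ via Lemma~11.2 of \cite{triosc}: your specific invocation of that lemma to the family $c\mapsto(cF_j)$ does not quite fit, since every member of that family \emph{is} an exact solution of \eqref{feqn2}, which is precisely the situation Lemma~11.2 excludes. The cleaner route is to observe that the propagation in the proof of Lemma~\ref{lemma:Comega} already runs from a \emph{single} initial value $a=f_2(\Phi_2(\bar x,\bar s))$ and determines all three $f_j$ as $a$ times fixed real analytic profiles; in the approximate setting (following \S4 and the derivation of (6.20) in \cite{sublevel4}) this yields $f_j\circ\Phi_j\approx a\,(g_j\circ\Phi_j)$ on $\bbS'$ with a \emph{common} scalar $a$ from the outset. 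Since the hypothesis of the lemma identifies the propagation profiles $g_j$ (up to a fixed nonzero scalar) with $|s|^\sigma F_j$, setting $b$ equal to $a$ times that scalar delivers the second alternative directly, with no further appeal to Lemma~11.2. The three separate $a_j$ appearing in the proof of Lemma~\ref{lemma:approxsolns1} are an over-parametrization that is harmless there (the conclusion is insensitive to relations among the $a_j$) but would create exactly the difficulty you encountered here.
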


In the case in which no nontrivial solutions of \eqref{Phisystem} exist,
the hypotheses of Lemma~\ref{lemma:approxsolns3} are satisfied with $\bF\equiv 0$, 
but the conclusion is weaker than that of Lemma~\ref{lemma:approxsolns2}.
Lemma~\ref{lemma:approxsolns3} is proved by excerpting portions of
the proofs of Lemmas~\ref{lemma:approxsolns1} and \ref{lemma:approxsolns2}.
\qed

\section{The case in which equation \eqref{Phisystem} has only the trivial solution}
\label{section:nosolutions}

We prove Lemma~\ref{lemma:flatsublevel} in the case in which
every solution of \eqref{Phisystem} in any nonempty open set vanishes identically.
Let $k_j$ be the functions given in the statement of Lemma~\ref{lemma:flatsublevel},
and let $\scripts^*(\bm)$ be as defined at the end of \S\ref{section:flat}.
For each $j\in\three$ define 
\begin{equation} f_j(y,s) = \lambda^{-\tau_0} k_j(m,s)
\ \forall\,y\in I_{m}^*.  \end{equation}
Then whenever $x\in Q_\bm^*$ and $s\in\scripts^*(\bm)$,
\begin{equation} \label{sublevelforf}
\sum_{j\in\three} (f_j\circ\Phi_j)(x,s)\,\nabla_x\varphi_j(x)
= 
\lambda^{-\tau_0} \sum_{j\in\three} k_j(m_j,s)\,\nabla_x\varphi_j(x)
= O(\lambda^{\rho_0-\tau_0}).
\end{equation}
Moreover, since $|k_1(m,s)|\ge \lambda^{\tau_0}$, $|f_1(y,s)|\ge 1$
whenever $y\in\varphi_1(Q_\bm^*)$ and $s\in\scripts^*(\bm)$.

Define
\begin{equation}
\scripta = \big\{ (x,s): 
\big|\sum_{j\in\three} (f_j\circ\Phi_j)(x,s)\,\nabla_x\varphi_j(x)\big|
= O(\lambda^{\rho_0-\tau_0}) \big\}. 
\end{equation}
For any $\bm$ and any $s = O(\lambda^{-\gamma})$,
if $s\in \scripts^*(\bm)$ then for any $x\in Q_\bm^*$,
\begin{equation}
\big|\sum_{j\in\three} (f_j\circ\Phi_j)(x,s)\,\nabla_x\varphi_j(x)\big|
= O(\lambda^{\rho_0-\tau_0}). 
\end{equation}
This is simply the defining property of $\scripts^*(\bm)$,
rewritten in terms of the functions $f_j$.  Thus
\begin{equation}
\bigcup_\bm \big(Q_\bm \times \scripts^*(\bm)\big) \subset\scripta.
\end{equation}

According to Lemma~\ref{lemma:approxsolns2},
since $s$ varies over $[-r,r]$ for $r = C\lambda^{-\gamma}$,
\begin{equation}
|\scripta| = O(\lambda^{-(\tau_0-\rho_0)\kappa} \cdot \lambda^{-\gamma})
\end{equation}
where $\kappa>0$ depends only on $\Pphi$.
Now \begin{equation} 
|\scripts| = \sum_\bm |\scripts^*(\bm)|
\asymp \lambda^{2\gamma} \big| \bigcup_\bm \big(Q_\bm \times \scripts^*(\bm)\big) \big| 
\le \lambda^{2\gamma}|\scripta|
=
O(\lambda^{\gamma -(\tau_0-\rho_0)\kappa}).
\end{equation}
Since $(\tau_0-\rho_0)\kappa>0$,
this completes the proof of Lemma~\ref{lemma:flatsublevel} for this case.
\qed

\section{$0$ and $1$ are the only relevant values of $\sigma$}\label{section:mostsigma}

The analysis developed thus far does not lead to the desired conclusion
if there are many $m$ such that $f_{j,m}$ has the property that 
$f_{j,m,s}(x) = f_{j,m}(x+s)\overline{f_{j,m}}(x)$
has the property that $|\widehat{f_{j,m,s}}(\xi(s))|$
is relatively large for many $s$, for a function $\xi$ that is
well approximated by $ c(j,m)s^\sigma$ for many $m$.
The exponents $\sigma = 0,1$ are excluded by hypotheses of Theorem~\ref{maintheorem}.
In this section we develop a lemma which will subsequently be used to show that for $\sigma\notin\{0,1\}$,
$|\widehat{f_{j,m,s}}(\xi(s))|$
cannot be large for many $s$ for such a function $\xi$. 
This result will be used in \S\ref{section:conclusionofproof} to complete the proof of
Lemma~\ref{lemma:flatsublevel}. 

\begin{lemma} \label{lemma:sigmatriform}
For each $\sigma\in\reals\setminus\{0,1\}$ there exists $\delta_2>0$ with the following property.
Let $\mu\in(0,1)$ and $B\in[0,\infty)$.
Let $r\in[1,\infty)$ be arbitrary, and let $\xi:\reals\to\reals$ 
be Lebesgue measurable and satisfy
\begin{equation} 
\big|\xi(s) - rs^\sigma\big| \le Br^\mu\ \ \forall\,s>0.  
% \xi(s) = rs^\sigma+ O(r^\mu)\ \ \forall\,s>0.  
\end{equation}
For any $f,g\in L^\infty(\reals)$ 
supported on some common interval of length $1$, 
% and any $h\in L^\infty(\reals)$, consider
%\begin{equation}
%I = \int_{\reals} \int_{s>0} f(x+s)\,g(x)\, h(s)\, e^{i \xi(s)x}\,dx\,ds.
%\end{equation}
%Then
\begin{equation}
\int_{s>0} \Big| \int_{\reals}   f(x+s)\,g(x)\,  e^{i \xi(s)x}\,dx \Big| \,ds
\le Cr^{-\delta_2} \norm{f}_\infty\norm{g}_\infty.
\end{equation}
The constant $C$ depends only on $\sigma,\mu,B$.
%where the quantities $C<\infty$ and $\delta_2>0$ depend only on $\sigma,\mu$.
\end{lemma}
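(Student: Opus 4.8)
The plan is to reduce, by a single Cauchy--Schwarz inequality in $s$ together with a $TT^{*}$-type expansion, to a one-dimensional estimate in which the assumption $\sigma\notin\{0,1\}$ surfaces as a nonvanishing lower bound for the derivative of an explicit auxiliary function; an elementary change of variables, suitably regularized to tolerate the merely measurable $\xi$, then finishes the proof. After normalizing $\norm{f}_{\infty}=\norm{g}_{\infty}=1$ and noting that $f(x+s)g(x)$ vanishes unless $s\in(0,1]$ (so the $s$-integral is over a bounded set), and assuming $r$ large, write $F(s)=\int f(x+s)g(x)e^{i\xi(s)x}\,dx$ and pick a measurable unimodular $\omega$ with $\omega(s)F(s)=|F(s)|$. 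Then
\[
\int_{s>0}|F(s)|\,ds=\int g(x)\,\Phi(x)\,dx,\qquad \Phi(x):=\int_{s>0}\omega(s)\,f(x+s)\,e^{i\xi(s)x}\,ds,
\]
and since $\Phi$ is supported in a fixed bounded interval and $\norm{g}_{2}\le 1$, Cauchy--Schwarz reduces the lemma to $\norm{\Phi}_{2}^{2}\le Cr^{-c}$ for some $c=c(\sigma,\mu)>0$. Expanding the square, integrating in $x$ (substituting $u=x+s'$ in the inner integral), and discarding the resulting unimodular factors gives
\[
\norm{\Phi}_{2}^{2}\le\iint\bigl|\widehat{\md_{s-s'}f}\bigl(\xi(s)-\xi(s')\bigr)\bigr|\,ds\,ds'=\int_{|h|\le 1}\Bigl(\int\bigl|\widehat{\md_{h}f}\bigl(\eta_{h}(s')\bigr)\bigr|\,ds'\Bigr)\,dh,
\]
where $h=s-s'$, $\eta_{h}(s')=\xi(s'+h)-\xi(s')$, and the $s'$-integration is over a bounded set.

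It therefore suffices to bound the inner integral, for each fixed $h$, by the change of variables $s'\mapsto\eta_{h}(s')$. Write $\eta_{h}(s')=M_{h}(s')+\bigl(E(s'+h)-E(s')\bigr)$ with $M_{h}(s')=r\bigl[(s'+h)^{\sigma}-s'^{\sigma}\bigr]$ and $E=\xi-r(\cdot)^{\sigma}$, $|E|\le Br^{\mu}$. This is the one place the hypothesis $\sigma\notin\{0,1\}$ enters: by the mean value theorem $M_{h}'(s')=r\sigma\bigl[(s'+h)^{\sigma-1}-s'^{\sigma-1}\bigr]=r\,\sigma(\sigma-1)\,\theta^{\sigma-2}h$ for some $\theta$ between $s'$ and $s'+h$, so $|M_{h}'(s')|\gtrsim r|h|$ once $s'$ lies in a fixed compact subinterval of $(0,\infty)$; this is the quantitative form of the fact that the monomial phase $rs^{\sigma}x$ is not of the separated form $A(x+s)+B(x)$ (equivalently, $\xi$ is not an affine function of $s$) unless $\sigma\in\{0,1\}$. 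For $\sigma>0$ one first discards $s\le r^{-c_{0}}$, whose contribution is $O(r^{-c_{0}})$ by the trivial bound $|F(s)|\le 1$ (and where, when $\sigma>2$, $\theta^{\sigma-2}$ is then only $\gtrsim r^{-c_{0}(\sigma-2)}$, costing a small power); for $\sigma<0$ one has $|\xi(s)|\gtrsim r$ on all of $(0,1]$ and no truncation is needed.

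Since $\xi$, hence $\eta_{h}$, is only measurable and is close to the smooth function $M_{h}$ merely in the (absolutely large) sense $|E|\le Br^{\mu}$, the change of variables is carried out dyadically: partition the $\eta$-axis into intervals $I_{l}$ of length $\ell\asymp\max(r^{\mu},1)$. Because $M_{h}$ is monotone with $|M_{h}'|\gtrsim r|h|$ and $|E|\le Br^{\mu}$, the set $\{s':\eta_{h}(s')\in I_{l}\}$ has measure $O\bigl(r^{\mu}/(r|h|)\bigr)$, and $O(r^{1-\mu}|h|)$ values of $l$ are relevant; moreover $\md_{h}f$ is supported in an interval of length $\le 1$, so the Plancherel--P\'olya inequality yields $\sum_{l}\sup_{I_{l}}|\widehat{\md_{h}f}|^{2}\lesssim \ell^{-1}\norm{\md_{h}f}_{2}^{2}\lesssim r^{-\mu}$. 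Combining these by Cauchy--Schwarz over $l$, the two powers of $r^{\mu}$ cancel, giving $\int|\widehat{\md_{h}f}(\eta_{h}(s'))|\,ds'\lesssim r^{-1/2}|h|^{-1/2}$ when $|h|\ge r^{\mu-1}$, and the trivial bound $O(1)$ when $|h|<r^{\mu-1}$. Integrating over $|h|\le 1$ gives $\norm{\Phi}_{2}^{2}\lesssim r^{-1/2}+r^{-(1-\mu)}$, hence $\int_{s>0}|F(s)|\,ds\lesssim r^{-\delta_{2}}$ with $\delta_{2}$ a positive constant depending only on $\sigma$ and $\mu$ (slightly smaller when $\sigma>2$ because of the $r^{-c_{0}}$ truncation), and with the constant absorbing the dependence on $B$ through $\ell$ and the measure bound.

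The main obstacle is precisely this last step: controlling the measurable, and in absolute value large, perturbation $E=\xi-r(\cdot)^{\sigma}$ without erasing the gain; the dyadic/Plancherel--P\'olya bookkeeping is exactly what forces the two occurrences of $r^{\mu}$ to offset one another. The remaining work is routine case analysis --- on the sign of $\sigma$, on $|h|$ versus $s'$, and on the ranges $0<\sigma\le 2$, $\sigma>2$, $\sigma<0$ --- which covers all $\sigma\notin\{0,1\}$.
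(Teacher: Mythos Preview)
Your approach is correct and genuinely different from the paper's. Both proofs dualize the absolute value and apply Cauchy--Schwarz, but then diverge. The paper applies Cauchy--Schwarz \emph{twice}: the first eliminates $g$ and produces the phase difference $\xi(s')-\xi(s)$; after a change of variables a second application eliminates $f$ as well, reducing $|I|^4$ to the purely geometric quantity $\iiint_{\Omega''}(1+|\xi(u+t)-\xi(u)-\xi(v+t)+\xi(v)|)^{-1}\,dt\,du\,dv$, and the hypothesis $\sigma\notin\{0,1\}$ enters only through the sublevel bound $|\{(u,v,t):|(u+t)^\sigma-u^\sigma-(v+t)^\sigma+v^\sigma|<\eps\}|=O(\eps^\kappa)$. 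You use only \emph{one} Cauchy--Schwarz, keep $\widehat{\scriptd_h f}$ in play, and then exploit the monotonicity of $s'\mapsto(s'+h)^\sigma-(s')^\sigma$ with derivative $\asymp r|h|$ via a coarse change of variables at scale $r^\mu$. The paper's route is more modular (it isolates a clean, $f$--free sublevel inequality); yours is more direct and yields a somewhat better exponent.

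Two small corrections. Your Plancherel--P\'olya claim $\sum_l\sup_{I_l}|\widehat{\scriptd_hf}|^2\lesssim\ell^{-1}\norm{\scriptd_hf}_2^2$ is not right in general; since $\scriptd_hf$ has support of length $\le 1$ the correct bound is only $\lesssim\norm{\scriptd_hf}_2^2\le 1$. With this in place your estimate becomes $\int|\widehat{\scriptd_hf}(\eta_h(s'))|\,ds'\lesssim r^{(\mu-1)/2}|h|^{-1/2}$ rather than $r^{-1/2}|h|^{-1/2}$, which still gives a negative power of $r$ after integrating in $h$. Second, for $\sigma<0$ the truncation $s\ge r^{-c_0}$ is still needed to bound the \emph{range} of $M_h$ (hence the number of relevant $I_l$), even though the lower bound on $|M_h'|$ holds without it; alternatively, replace the count on $l$ by the identity $\sum_l|\{s':\eta_h(s')\in I_l\}|\le 1$ in your Cauchy--Schwarz over $l$, which sidesteps the range bound entirely.
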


The same conclusion holds for the corresponding integral over $s\in\reals^-$.

The conclusion is false for $\sigma=0$
(consider $f_j(y) = \overline{g_j}(y) = e^{i\xi y}$) and also false for $\sigma=1$ 
(consider $f_j(y) = \overline{g_j}(y) = e^{i\xi y^2}$). 

% In the application, $r$ will essentially be comparable to $\lambda^{1-\gamma}$.

\begin{proof}
The conclusion can be equivalently stated 
in terms of a trilinear expression
\begin{equation}
I= \iint  f(x+s)\,g(x)\,h(s)\,  e^{i \xi(s)x}\,
\one_\Omega(x,s)
\,dx \,ds,
\end{equation}
where $\Omega$ is the set of all $(x,s)$ such that
$x\in J$, $x+s\in J$, and $s\in\reals^+$
for a certain interval $J\subset\reals$ of length $1$. 
This set $\Omega$ is convex.
% By changing coordinates, we can suppose
The conclusion is then that
\begin{equation} |I|\le Cr^{-\delta_2} \norm{f}_\infty\norm{g}_\infty\norm{h}_\infty \end{equation}
for any three functions $f,g,h$. 

Assume that $\norm{f}_\infty\le 1$ and likewise for $g,h$.
By Cauchy-Schwarz, $|I|^2$ is majorized by
\begin{align*}
C & \iiint f(x+s') \bar f(x+s)  h(s')\bar h(s) 
\,e^{i[\xi(s')-\xi(s)]x}
\,\one_{\Omega}(x,s)\one_\Omega(x,s')
\,dx\,ds\,ds'
\\
&= C \iiint f(x+s+t)\bar f(x+s) h(s+t)\bar h(s) 
\,e^{i[\xi(s+t)-\xi(s)]x}
\,\one_{\Omega}(x,s)\one_\Omega(x,s+t)
\,dx\,ds\,dt
\\
&= C \int\Big(\iint F_t(x+s) H_t(s)
\,e^{i[\xi(s+t)-\xi(s)]x}
\,\one_{\Omega}(x,s)\one_\Omega(x,s+t)
\,dx\,ds\Big)\,dt
\\
&= C \int\Big(\iint F_t(y) \tilde H_t(s)
\,e^{i[\xi(s+t)-\xi(s)]y}
\,\one_{\Omega}(y-s,s)\one_\Omega(y-s,s+t)
\,dy\,ds\Big)\,dt
\end{align*}
with $F_t(x) = f(x+t)\bar f(x)$, $H_t(s) = h(s+t)\bar h(s)$,
and $\tilde H_t(s) = H_t(s) e^{-i[\xi(s+t)-\xi(s)]s}$.
The outer integral extends only over some interval of length $2$.

A second application of Cauchy-Schwarz results in majorization
of the square of the absolute value of the inner integral by
\begin{equation}
C \iint \tilde H_t(u) \overline{\tilde H_t(v)}
\Big(\int e^{i[\xi(u+t)-\xi(u)-\xi(v+t)+\xi(v)]y}
\,\one_{\Omega'}(y,u,v,t)
\,dy\Big)
\,du\,dv
\end{equation}
for each $t$,
with
\[ \one_{\Omega'}(y,u,v,t)
= \,\one_{\Omega}(y-u,u)\one_\Omega(y-u,u+t)
\,\one_{\Omega}(y-v,v)\one_\Omega(y-v,v+t).  \]
The set $\Omega'\subset\reals^4$, whose indicator function is denoted here
by $\one_{\Omega'}$, is convex.

Define $\Omega''$ to be the set of all $(u,v,t)$
for which there exists $y$ such that $(y,u,v,t)\in\Omega'$.
$\Omega''$ is convex, and is contained in a ball of radius $O(1)$,
centered at the origin, in $\reals^3$.
Replacing the inner integral by its absolute value 
and invoking the hypothesis $\norm{h}_{L^\infty}\le 1$,
we conclude that $|I|^4$ is majorized by
\begin{multline}
C \iiint 
\Big| \int e^{i[\xi(u+t)-\xi(u)-\xi(v+t)+\xi(v)]y}
\,\one_{\Omega'}(y,u,v,t)
\,dy\Big|
\,dt\,du\,dv
\\ \lesssim
\iiint_{\Omega''} 
\big(1+\big|\xi(u+t)-\xi(u)-\xi(v+t)+\xi(v)\big|)^{-1}
\,dt\,du\,dv.
\end{multline}
The inequality holds because for each $(u,v,t)\in\Omega''$,
$y\mapsto\one_{\Omega''}(y,u,v,t)$ is the indicator function
of an interval of length $O(1)$.

Each quantity $t,u,v$ is confined to an interval of length $O(1)$
centered at the origin in $\reals$,
and there is an implicit restriction to the region
in which $u,v,u+t,v+t$ are all positive.

%If $\xi(s)\equiv rs^\tau$ with $\tau=1$, the expression 
%$\xi(u+t)-\xi(u)-\xi(v+t)+\xi(v)$ vanishes identically,
%but otherwise there is a gain for large $r$.  Indeed,
The structural hypothesis $\xi(s) = rs^\sigma+ O(r^\mu)$ 
and the restriction $|s| = O(1)$ implicit in the hypotheses give
\begin{equation}
r^{-1} \big|\,\xi(u+t)-\xi(u)-\xi(v+t)+\xi(v)\,\big| 
= \big|\,(u+t)^\sigma-(u)^\sigma-(v+t)^\sigma+v^\sigma\,\big| + O(r^{\mu-1}).
\end{equation}

For any $\sigma\notin\{0,1\}$
there exists $\kappa>0$ such that for any $\eps>0$,
\begin{equation}
\big|\big\{ (u,v,t)\in \Omega'': 
|(u+t)^\sigma-u^\sigma-(v+t)^\sigma+v^\sigma| <\eps \big\} \big|
= O(\eps^\kappa).
\end{equation}
This can be shown by fixing $(v,t)$ and allowing $u$ to vary
over the interval $\{u: (u,v,t)\in\Omega''\}$.
Details of the elementary proof are left to the reader.

% ?? removed July 8
%The function $(u,v,t)\mapsto (u+t)^\tau-u^\tau-(v+t)^\tau+v^\tau $
%is $C^\omega$. Since $\tau\notin\{0,1\}$, 
%it does not vanish identically on the region in question. 
%Therefore its absolute value is $\gtrsim r^{(\mu-1)/2}$
%on the complement of a set of three-dimensional Lebesgue measure
%$O(r^{-\kappa})$ for a certain exponent $\kappa>0$.
Choose $\eps = r^{\mu-1}$. 
Then for any $(u,v,t)$ satisfying
$r^{-1} |\xi(u+t)-\xi(u)-\xi(v+t)+\xi(v)| <\eps$,
one also has $|(u+t)^\sigma-u^\sigma-(v+t)^\sigma+v^\sigma| \le C\eps$.

Therefore 
\begin{equation} \big|\big\{(u,v,t)\in\Omega'':
|\xi(u+t)-\xi(u)-\xi(v+t)+\xi(v)| \le r^{\mu} \big\}\big| 
= O(r^{-(1-\mu)\kappa}), \end{equation}
and consequently
\begin{equation}
\iiint 
\big(1+\big|\xi(u+t)-\xi(u)-\xi(v+t)+\xi(v)\big|)^{-1}
\,dt\,du\,dv
= O\big(r^{-(1-\mu)\kappa} + r^{-\mu}\big). 
\end{equation}
\end{proof}

\section{Conclusion of proof of Lemma~\ref{lemma:flatsublevel}} \label{section:conclusionofproof}

We now complete the proof of Lemma~\ref{lemma:flatsublevel},
by proving the upper bound \eqref{ineq:mainsublevelrestated},
that is, $\sum_\bm |\scripts^*(\bm)| = O(\lambda^{\gamma-\varrho})$,
in the case in which there does exist a real analytic solution $\bF$,
not identically zero, of \eqref{Phisystem} in some nonempty open set.
The case in which every real analytic exact solution 
vanishes identically, has already been treated in \S\ref{section:nosolutions}.
% marker

\begin{comment}
\fbox{it would suffice}
to establish the upper bound 
$|\scripts| = O(\lambda^{\gamma-\varrho})$ for the Lebesgue measure of the set
$\scripts$ of pairs $(\bm,s)$ defined in \eqref{scriptsdefn}.
We will not establish this bound. Instead, we will 
majorize the measure of the subset of $\scripts$ defined by the supplementary inequality
$|a(1,m_1,s,k_1(m_1,s)|>\lambda^{-\delta_1}$,
where 
$a(1,m_1,s,k_1(m_1,s)=$ \fbox{?}
\ \fbox{\bf This suffices \dots}
\end{comment}

We have shown that the exact solutions in any connected open
set form a one-dimensional vector space,
and take the form $s^\sigma \bF(y)$, with 
$\bF\in C^\omega$;  
$\sigma\in\reals$ is uniquely determined by $(\varphi_j: j\in\four)$.
%and $\bF = (F_1,F_2,F_3)$ is uniquely determined up to multiplication by scalars.
Choose and fix such a function $\bF$ that does not vanish identically.
By auxiliary hypothesis~\ref{auxhyp1}, $\sigma\ne 1$.
By Lemma~\ref{lemma:sigma_not_0}, $\sigma\ne 0$.
% marker

For $j\in\three$ define $K_j$ by
\begin{equation}
K_j(y,s) = \lambda^{-\tau_0} k_j(m,s)
\ \text{ for $y\in I_m^*$}
\end{equation}
and for $s = O(\lambda^{-\gamma})$,
where $m$ is chosen to have the appropriate residue modulo $3$
for the index $j$.
Then $|K_1(y,s)|\ge 1$ for every $(y,s)$.

Define $S(\bK)$ to be the set of all $(x,s)\in B\times [-C\lambda^{-\gamma},C\lambda^{-\gamma}]$
such that there exists $\bm$ with $x$ in the support of $\zeta_\bm$
satisfying $s\in \scripts^*(\bm)$. The desired bound
$\sum_\bm |\scripts^*(\bm)| = O(\lambda^{\gamma-\varrho})$
is equivalent to
\begin{equation} \label{SbKbound} \lambda^\gamma |S(\bK)| = O(\lambda^{-\varrho}).\end{equation}

\begin{comment}
Consider the associated sublevel sets
\begin{equation}
S(\bK,\eps) = \big\{
(x,s)\in B\times[-C\lambda^{-\gamma},C\lambda^{-\gamma}]:
\big| \sum_{j\in\three} (K_j\circ\Phi_j)(x,s)\,\nabla\varphi_j(x) \big|
\le\eps \big\},
\end{equation}
which satisfy
\begin{equation}
|S(\bK,C\lambda^{\rho-\tau_0})| \gtrsim \lambda^{-2\gamma}|\scripts|.
\end{equation}
We require an upper bound on the Lebesgue measure of the subset $\bbS
% S'(\bK, C\lambda^{\rho-\tau_0})
\subset S(\bK, C\lambda^{\rho-\tau_0})$
consisting of points $(x,s)$ such that $\varphi_1(x)\in I_{m_1}$
for an index $m_1$ that satisfies
\[|a_{1,m_1,s,k_1(m_1,s)}|>\lambda^{-\delta_1}.\]
To show that $|\scripts| = O(\lambda^{\gamma-\varrho})$, we establish
the essentially equivalent bound
\begin{equation} \label{SbK}
\lambda^\gamma |\bbS|  = O(\lambda^{-\gamma-\varrho}).
% \lambda^\gamma |S(\bK,C\lambda^{\rho-\tau_0})|  = O(\lambda^{-\varrho}).
\end{equation}
\end{comment}

By Lemma~\ref{lemma:approxsolns3}, either $\lambda^\gamma|S(\bK)|\le \lambda^{-c}$
for a certain constant $c>0$ 
--- in which case the proof of Lemma~\ref{lemma:flatsublevel} is complete ---
or there exist a measurable set $S\subset S(\bK)$ 
and a scalar $b\in\reals$ that satisfy
\begin{equation} \label{approxK}
|K_1(y,s)-b|s|^\sigma F_1(y)| \lesssim \lambda^{\rho-\tau_0}
\ \text{ for all $(y,s)\in S$}
\end{equation}
and
\begin{equation} \lambda^\gamma |S| \gtrsim (\lambda^\gamma |S(\bK)|^C.  \end{equation}
Thus in order to complete the proof of Lemma~\ref{lemma:flatsublevel}, 
it suffices to show that in this second case,
\begin{equation} \lambda^\gamma|S| = O(\lambda^{-\varrho}) 
\ \text{ for some $\varrho>0$.} \end{equation}

Let $\delta_3>0$ be another small parameter satisfying requirements
to be specified below.
We may assume that $|s|\ge\lambda^{-\gamma-\delta_3}$
for every $(y,s)\in S$.
Indeed, if the set $S'$ of all $(y,s)\in S$ satisfying this
inequality had Lebesgue measure $\le\tfrac12|S|$,
then $|S|\le 2|S\setminus S'| \lesssim \lambda^{-\gamma-\delta_3}$,
and the proof would be complete.
Thus we may assume that $|S'|\ge \tfrac12|S|$.

We may assume that $|S'|>0$, and in particular, that $S'$ is nonempty.
Choose an arbitrary point $(\bary,\bars)\in S'$.
Since $|K_1(\bary,\bars)|\ge 1$, $\big| b|\bars|^\sigma F_1(\bary) \big|
\ge 1-O(\lambda^{\rho-\tau_0})\ge \tfrac12$, since $\tau_0>\rho$
and we may assume $\lambda$ to be large.
The function $\bF$ is fixed, and is bounded above.
Since $\lambda^{-\gamma-\delta_3}\le |\bars| \le\lambda^{-\gamma}$
it follows that 
\begin{equation}
\left\{ \begin{aligned}
&|b|\lambda^{-\gamma\sigma} \gtrsim 1 \ \text{ if $\sigma>0$,}
\\
&|b|\lambda^{-(\gamma+\delta_3)\sigma} \gtrsim 1\ \text{ if $\sigma<0$.}
\end{aligned} \right. \end{equation}
Thus 
\begin{equation} \label{bsigmabound}
|b|\lambda^{-\sigma\gamma} \gtrsim \lambda^{-|\sigma|\delta_3}.
\end{equation}

The function $F_1$ is real analytic and does not vanish identically,
for if $F_1\equiv 0$ then $|K_1(y,s)-0|= O(\lambda^{\rho-\tau_0})$
for all $(y,s)\in S$, contradicting the condition $|K_1(y,s)|\ge 1$
unless $S = \emptyset$.
Therefore there exist $c>0$  and $C<\infty$ such that 
\begin{equation}
\big|\big\{y\in \varphi_1(B): |F_1(y)|\le\eps\big\}\big|
\le C\eps^c\ \forall\,\eps>0.
\end{equation}
These quantities $c,C$ depend only on $\Pphi$.
Therefore 
\begin{equation}
\big|\big\{(x,s)\in B\times [-C\lambda^{-\gamma},C\lambda^{-\gamma}]: 
|F_1(\varphi_1(x))| \le \lambda^{-\delta_4}\big\}\big|
= O(\lambda^{-c\delta_4}\lambda^{-\gamma}).
\end{equation}

Thus it suffices to establish an upper bound of the same type
for the measure of the set of all $(x,s)\in S'$
that satisfy the supplementary inequality $|F_1(\varphi_1)(x)|>\lambda^{-\delta_4}$.

For any such $x$, and any interacting $\bm$ for which $x$ belongs to the support of $\zeta_\bm$,
for any other $x'$ in the support of $\zeta_\bm$,
\[|F_1(\varphi_1)(x')|\ge |F_1(\varphi_1)(x)|-C\lambda^{-\gamma}
\ge \lambda^{-\delta_4}-\lambda^{-\gamma}\gtrsim \lambda^{-\delta_4}\] 
since $|x-x'| = O(\lambda^{-\gamma})$ and $\delta_4<\gamma$.

Thus it suffices to show that for any interacting $\bm$
with the supplementary property that $|F_1(\varphi_1(x'))|\gtrsim \lambda^{-\delta_4}$
for every $x'$ in the support of $\bm$, the following holds:
For any $x$ that is in the support of $\bm$,
$S'(x) = \{s\in\reals: (x,s)\in S' \}$ satisfies
$|S'(x)| = O(\lambda^{-\gamma-\varrho})$.

\begin{comment}
Consider any index $\bm\in\integers^4$.
Choose any point $\bary\in I_{m_1}$ and set \[\tilde F_1(\bm) = F_1(\bary).\]

Let $\delta_4>0$ be another small parameter. 
Consider the case in which $|\tilde F_1(\bm)| \le \lambda^{-\delta_4}$. 
% marker
Since $\tilde F_1(\bm)=F_1(\bary)$ for some $\bary\in I_{m_1}$ and
$F_1$ is a fixed Lipschitz function and $I_{m_1}^*$
has diameter $3\lambda^{-\gamma}$,
\begin{equation}
|F_1(y)|\lesssim \lambda^{-\delta_4}+\lambda^{-\gamma}
\lesssim \lambda^{-\delta_4} \ \forall\, y\in I_{m_1}^*. 
\end{equation}

%Therefore the number of $\bm$ for which $|F_1(\bm)| \le \lambda^{-\delta_4}$
%is $O(\lambda^{2\gamma-c\delta_4})$. 
Therefore the number of interacting $\bm\in\integers^4$
for which $|\tilde F_1(\bm)| \le \lambda^{-\delta_4}$
is $O(\lambda^{2\gamma-c\delta_4})$.
Consequently the sum over all such $\bm$ of
$|\tilde\scripts(\bm)|$ is $O(\lambda^{-\gamma}\cdot \lambda^{2\gamma-c\delta_4})
= O(\lambda^{\gamma-c\delta_4})$, as desired.

% Define $\tilde F_1:\integers\to\reals $ by \[\tilde F_1(m) = F_1(z_m)\] for some $z_m \in I_m$.
\repair

% Next, consider the main case, in which $|\tilde F_1(\bm)| 

Consider any $x$ such that $|F_1(\varphi_1(x))| \ge\lambda^{-\delta_4}$,
and such that there exists $s$ such that $(x,s)\in S'$.
Then there exists an interacting tuple $\bm$ such that $y=\varphi_1(x)$
belongs to $I_{m_1}$.
We claim that for any such $x$,
\begin{equation}
\big|\big\{s: \lambda^{-\delta_3}\le|s|\le C\lambda^{-\gamma}
\text{ and $(x,s)\in S'$ } \big\}\big|
	= O(\lambda^{-\gamma-\varrho})
\end{equation}
for some $\varrho>0$.
\end{comment}

Consider any such $x$.
The coefficients in the local Fourier series \eqref{localFourier} are by definition
\begin{align*} a_{j,m,s,k_j(m,s)} 
%&= (2\lambda^{-\gamma})^{-1} \int \eta_m(x) f^s_{j}(x)
%e^{i \pi\lambda^\gamma k_j(m,s)x} \,dx
%\\&
= \tfrac12 \lambda^{\gamma} \int \eta_m(y) f_j(y+s)\overline{f_j(y)}\,
e^{-i \pi\lambda^\gamma k_j(m,s)y} \,dy.
\end{align*}
By the inequality \eqref{extrainfo}, which is one of the hypotheses 
of Lemma~\ref{lemma:flatsublevel}, these satisfy 
\[|a_{1,m_1,s,k_1(m_1,s)}|\gtrsim\lambda^{-\delta_1}.\] 
This lower bound can be exploited to obtain an upper bound for $|S'(x)|$.
To begin,
\begin{multline} \label{Fcoefftolowerbound}
%\sum_{m\in\integers}
\int_{S'(x)} 
\big|  \int \eta_{m_1}(y) f_1(y+s)\overline{f_1(y)} e^{-i\pi\lambda^\gamma
k_1(m_1,s) y}\,dy\big|\,ds
\\ \gtrsim \int_{S'(x)}
\lambda^{-\gamma} |a_{1,m_1,s,k_1(m_1,s)}|\,ds \gtrsim \lambda^{-\gamma-\delta_1} |S'(x)|.
\end{multline}

\begin{comment}
% ?? removed 7/2; now redundant
Define $\scripts(m)$ to be the set of all $s$
satisfying $|s| = O(\lambda^{-\gamma})$
for which $|a_{1,m,s,k_1(m,s)}| \ge\lambda^{-\delta_1}$,
and $|k_1(m,s)| \gtrsim\lambda^{\tau_0}$. 
These sets are quite closely related to the sets $\scripts^*(\bm)$ introduced above,
but are indexed by $m\in\integers$ rather than by $\bm\in\integers^4$.
\end{comment}

To continue,
an upper bound for the left-hand side of \eqref{Fcoefftolowerbound} can be derived from 
Lemma~\ref{lemma:sigmatriform}  after a change of variables. Indeed,
set $\tilde s = \lambda^\gamma s$ and $\tilde x = \lambda^\gamma x$.
By \eqref{approxK},
the phase $\pi\lambda^\gamma k_1(m_1,s)\,y$ is equal to $\xi(\tilde s)\,\tilde y$ with
\begin{equation}
\xi(t) = \pi \lambda^{\tau_0} K_1(m_1,\lambda^{-\gamma} t)
	= \pi \lambda^{\tau_0} b\lambda^{-\gamma\sigma} |t|^\sigma F_1(\varphi_1(x))
+O(\lambda^\rho).  \end{equation}

% \begin{equation} |F_1(\bm)| \gtrsim \lambda^{-\tau_0+C\rho +C\delta_3} \end{equation}

In these coordinates, we aim to apply Lemma~\ref{lemma:sigmatriform} with the parameter
\begin{equation} r = \pi \lambda^{\tau_0} |b| \lambda^{-\gamma\sigma} F_1(\varphi_1(x)).  \end{equation}
The lemma is applicable provided that $r\ge 1$ and $\lambda^\rho\le r^\mu$ for some $\mu<1$;
we next verify that these inequalities are satisfied.

Since $|b\lambda^{-\gamma\sigma}| \gtrsim \lambda^{-|\sigma|\delta_3}$
by \eqref{bsigmabound},
\begin{equation} r\gtrsim \lambda^{\tau_0-C\delta_3-\delta_4} \end{equation}
and it follows that both
$r\ge 1$ and $\lambda^\rho\le r^{1/2}$ 
for all sufficiently large $\lambda$,
provided that $\rho,\delta_3,\delta_4$ are chosen to be sufficiently
small relative to $\tau_0$.
We require that the parameters be chosen so that
\begin{equation}
C\delta_3+\delta_4 < \tfrac12\tau_0
\ \text{ and } \rho < \tfrac14\tau_0.
\end{equation}
Then Lemma~\ref{lemma:sigmatriform} applies, and yields the upper bound
\begin{equation} \label{Fcoefftoupperbound}
	\begin{aligned}
\int_{S'(x)} 
		\big|  \int \eta_\bm(y) & f_1(y+s)\overline{f_1(y)} e^{i\pi\lambda^\gamma
k_1(m_1,s) y}\,dy\big|\,ds
\\ & \le
\int_{|s| = O(\lambda^{-\gamma})} 
 \big|  \int \eta_\bm(y)  f_1(y+s)\overline{f_1(y)} e^{i\pi\lambda^\gamma
k_1(m_1,s) y}\,dy\big|\,ds
\\ & \lesssim 
\lambda^{-2\gamma} \cdot (\lambda^{\tau_0-C\delta_3-\delta_4})^{-\delta_2}.
% + O(\lambda^{-2\gamma-\delta_3}). (removed July 10)
\end{aligned}
\end{equation}
% whenever $|F_1(\varphi_1(x))| \ge\lambda^{-\delta_4}$.
Together with \eqref{Fcoefftolowerbound}, 
\eqref{Fcoefftoupperbound} yields a lower bound for $|S'(x)|$:
\begin{equation}
|S'(x)| \lesssim \lambda^{-\gamma+\delta_1} \, \lambda^{-\tau_0\delta_2/4}.
% + O(\lambda^{-\gamma-\delta_3+\delta_1}),
% \ \text{ whenever $|\tilde F_1(\bm)|\ge\lambda^{-\delta_4}$.}
\end{equation}
We require that
\begin{equation} \delta_3 < \delta_1 < \tfrac18 \tau_0\delta_2.  \end{equation}
% \ \text{ and } \delta_3 <\delta_1.

Recall that $\sigma$ is uniquely determined by $(\varphi_j: j\in\four)$,
while $\delta_2$, which arose in Lemma~\ref{lemma:sigmatriform},
depends only on $\sigma$, and $\delta_1$ is assumed to be less than or equal to
$\delta_0$, which has not yet been specified. Therefore these requirements 
can be satisfied by first choosing $\delta_0<\tfrac18\tau_0\delta_2$,
then choosing $\delta_3,\delta_4$ sufficiently small relative to $\delta_0,\delta_2$.
Thus we have shown that if the parameters $\delta_0,\delta_3,\delta_4,\rho$
are chosen to satisfy the indicated constraints then uniformly for all $x\in B$
under consideration,
\begin{equation} |S'(x)| = O(\lambda^{-\gamma-\delta_5}) \end{equation}
for a certain exponent $\delta_5>0$.
This completes the proof of Lemma~\ref{lemma:flatsublevel}.

\section{Conclusion of proof} \label{section:final_conclusion}

Assume that $\norm{\bff}_{L^\infty} = O(1)$.
% Let $\delta_0>0$ satisfy \eqref{delta0restriction}.
Let $\delta_1^*,\delta_2^*,\delta_3^*,\delta_4^*\in(0,\delta_0]$ be small positive quantities, 
to be chosen below.
%\footnote{These are not related to the parameter labeled $\delta_1$ in the earlier discussion.}
For each index $j$, decompose $f_j = f_{j,\sharp} + f_{j,\flat}$ 
by applying Lemma~\ref{lemma:sharpflat} 
in the manner indicated in the discussion following its statement, with parameter $\delta = \delta_j^*$.

The summands $f_{j,\sharp}$ satisfy
\begin{equation}
\norm{f_{j,\sharp}}_{L^\infty} = O( \lambda^{a_j})
\end{equation}
where $a_j= \delta_j^*\tau_0/2$ tends to $0$ as $\delta_j^*\to 0$.

Express
\begin{equation}
\scriptt(\bff) 
= \scriptt(f_1,f_2,f_3,f_{4,\sharp})
+ \scriptt(f_1,f_2,f_3,f_{4,\flat}).
\end{equation}
By Proposition~\ref{prop:flat},
\begin{equation}
|\scriptt(f_1,f_2,f_3,f_{4,\flat})| = O(\lambda^{-\tau_4})
% \ \ \text{ and } \ \ \norm{f_{4,\sharp}}_{L^\infty} = O(\lambda^{a_4})
\end{equation}
for a certain positive constant $\tau_4>0$ that depends on
the choice of $\delta_4^*$. 
% and satisfy $a_4\to 0$ as $\delta_4^*\to 0$.
Express
\begin{equation}
\scriptt(f_1,f_2,f_3,f_{4,\sharp})
= \scriptt(f_1,f_2,f_{3,\sharp},f_{4,\sharp})
+ \scriptt(f_1,f_2,f_{3,\flat},f_{4,\sharp}).
\end{equation}
By Proposition~\ref{prop:flat},
\begin{equation}
|\scriptt(f_1,f_2,f_{3,\flat},f_{4,\sharp})|
= O(\lambda^{-\tau_3}\norm{f_{4,\sharp}}_{L^\infty}) = O(\lambda^{-\tau_3 + a_4} ),
\end{equation}
for a certain $\tau_3>0$ that depends on $\delta_3^*$.
%The summand $f_{3,\sharp}$ satisfies
%\[ \norm{f_{3,\sharp}}_{L^\infty} = O(\lambda^{a_3}).\]
Continuing in this way gives
\begin{multline} \label{secondtolast}
|\scriptt(\bff)|
=O(\lambda^{-\tau_4})
+ O(\lambda^{-\tau_3+a_4})
+ O(\lambda^{-\tau_2+a_3+a_4})
\\
+ O(\lambda^{-\tau_1+a_2 + a_3+a_4})
+ |\scriptt(f_{1,\sharp},f_{2,\sharp},f_{3,\sharp},f_{4,\sharp})|,
\end{multline}
with each $\tau_j$ being positive, and depending on $\delta_j^*$.
Finally, since each function $f_{j,\sharp}$ is a sum of $O(\lambda^{a_j})$
functions $g_j$ of the type that appears in Proposition~\ref{prop:sharp},
that Proposition provides a majorization
\begin{equation} \label{last}
\scriptt(f_{1,\sharp},f_{2,\sharp},f_{3,\sharp},f_{4,\sharp})
= O\big(\lambda^{-\tau} \prod_{j=1}^4 \norm{f_{j,\sharp}}_{L^\infty}\big)
= O\big(\lambda^{-\tau+ a_1+a_2+a_3+a_4}\big)
\end{equation}
for a certain exponent $\tau>0$ that does not depend on the quantities $\delta_j^*$.

Choose the parameters $\delta_j^*$ to be sufficiently small to ensure
that $a_j\le \tau/8$ for each $j$, and choose them by ascending induction on $j$
%$\delta_{n}^*\le\delta_{n-1}^*$ for each $n$,
%and so that 
so that each $\delta_n^*$ is sufficiently small as a function of $\delta_{n-1}^*$
to ensure that 
% the right-hand side of \eqref{last}, and 
each term on the right-hand side of \eqref{secondtolast} is of the form $O(\lambda^{-c})$
for some $c>0$. This completes the proof of Theorem~\ref{maintheorem}.

\section{A sublevel set inequality}

For any four-tuple $\bff$ of measurable functions $f_j:\varphi_j(B)\to\complex$
define
\begin{equation}
	S(\bff,\eps) = \big\{x\in B: |\sum_{j=1}^4 (f_j\circ\varphi_j)(x) | <\eps\big\}
\end{equation}
and
\begin{equation}
	|(\bff\circ\bPhi)(x)| = \sum_{j=1}^4 |(f_j\circ\varphi_j)(x)|.
\end{equation}

\begin{corollary}
Let $(\ba,\bPhi)$ satisfy the hypotheses of Theorem~\ref{maintheorem}.
There exist $C<\infty$ and $\tau>0$ such that for any Lebesgue measurable $\bff$
and any $\eps>0$,
\begin{equation}
\big|\big\{
	x\in S(\bff,\eps): |(\bff\circ\bPhi)(x)|\ge 1 \big\}\big|
\le C\eps^\tau.
\end{equation}
\end{corollary}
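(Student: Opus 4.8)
The plan is to reduce the inequality to the three‑fold sublevel estimate Theorem~\ref{thm:sublevel}, running the reduction already carried out in the proof of Proposition~\ref{prop:sharp}. Write $E = \{x \in S(\bff,\eps) : |(\bff\circ\bPhi)(x)| \ge 1\}$. First I would normalize: decompose $E$ according to which index $k\in\four$ realizes $\max_{j} |f_j(\varphi_j(x))| \ge \tfrac14$, and then dyadically according to the size $2^n$ of that maximum; on the piece where $|f_k(\varphi_k(x))| \asymp 2^n$ and $|f_j(\varphi_j(x))| \le 2^{n+1}$ for all $j$, replace each $f_j$ by $2^{-n} f_j$ and $\eps$ by $2^{-n}\eps$. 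The superlevel constraint $\sum_j |f_j\circ\varphi_j| \ge 1$ bounds $n$ from below, so the resulting estimates sum geometrically, and it suffices to treat the case $\norm{f_j}_{L^\infty} = O(1)$ with $|f_k(\varphi_k(x))| \gtrsim 1$ on the set under consideration. Since $|\sum_{j=1}^4 (f_j\circ\varphi_j)(x)| < \eps$ there, the remaining three functions satisfy $\max_{j \ne k} |f_j(\varphi_j(x))| \gtrsim 1$ as well; splitting once more and relabelling, I may assume $k \in \three$, say $k = 1$, so $|f_1(\varphi_1(x))| \gtrsim 1$ on $E$. Finally, by a finite partition of unity and smooth changes of variables, as in \S\ref{section:functionalequation}, I may assume $\varphi_4(x_1,x_2) \equiv x_1$.

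The key step is the elimination of $f_4$. Because $\varphi_4 = x_1$, the term $f_4 \circ \varphi_4$ depends on $x_1$ alone, so the defining inequality of $E$ forces $\sum_{j=1}^3 (f_j\circ\varphi_j)$ to be constant, up to an error $O(\eps)$, along each fibre $\{x_1 = \mathrm{const}\}$ — the sublevel‑set analogue of the stationarity condition \eqref{stationarity4} in the proof of Proposition~\ref{prop:sharp}, with $V_4 = \partial/\partial x_2$ in its usual role. Performing the reduction exactly as there, which produced the inclusion $S(\bF,\eps) \subset S'(F_1,F_2,F_3,\eps)$, one arrives at a sublevel inequality of the form $|\sum_{j=1}^3 a_j(x)(g_j\circ\varphi_j)(x)| < C\eps$ with $a_j = V_4\varphi_j = \partial\varphi_j/\partial x_2$ vanishing nowhere, to which Theorem~\ref{thm:sublevel} applies. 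Its hypotheses are verified precisely as in the proof of Proposition~\ref{prop:sharp}: transversality is inherited; $a_j = V_4\varphi_j$ vanishes nowhere by the transversality hypothesis; the non‑solvability of $\sum_{j=1}^3 a_j(g_j\circ\varphi_j) \equiv 0$ reduces, via antiderivatives $G_j$ and the identity $V_4\big(\sum_{j=1}^3 G_j\circ\varphi_j\big) = \sum_{j=1}^3 a_j(g_j\circ\varphi_j)$ together with $\varphi_4 = x_1$, to the main hypothesis~\ref{mainhypothesis}; and the ratio condition on $\frac{a_i|V_k\varphi_i|^\tau}{a_j|V_k\varphi_j|^\tau}$ is auxiliary hypothesis~\ref{auxhyp3}. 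The lower bound $|f_1\circ\varphi_1|\gtrsim 1$ is what supplies the normalization $|f_i|\ge 1$ demanded by Theorem~\ref{thm:sublevel}. This yields $|E| = O(\eps^\tau)$ for an exponent $\tau > 0$ depending only on $\Pphi$.

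The main obstacle is that $\bff$ is merely Lebesgue measurable, so one cannot literally differentiate in $x_2$ to eliminate $f_4$, and a crude discretization of the values of $f_4$ at scale $\eps$ would lose a prohibitive factor $\eps^{-1}$ upon summation over the level sets. The remedy, and where the substance of the proof lies, is to realize the elimination through the machinery of \S\ref{section:functionalequation}--\S\ref{section:conclusionofproof}: one passes to an approximate functional equation on $B \times \reals^1_{\ne 0}$ — finite differences of the $f_j$ at a scale $s$ standing in for derivatives, with $\Phi_j(x,s) = (\varphi_j(x), sD\varphi_j(x))$ — and then feeds it into the already‑established Lemmas~\ref{lemma:approxsolns2} and~\ref{lemma:approxsolns3}. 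Whether the exact equation \eqref{Phisystem} admits a nontrivial solution is decided by Lemmas~\ref{lemma:Comega}, \ref{lemma:uniquebff}, \ref{lemma:sigma} and~\ref{lemma:sigma_not_0} together with auxiliary hypothesis~\ref{auxhyp1}; if one exists, the surviving homogeneity $\sigma \notin \{0,1\}$ is ruled out using Lemma~\ref{lemma:sigmatriform}, exactly as in the proof of Lemma~\ref{lemma:flatsublevel}. Adapting \S\ref{section:nosolutions}--\S\ref{section:conclusionofproof} to the present sublevel set — which comes directly from $\sum_j f_j\circ\varphi_j \approx 0$ rather than from the $\flat$ decomposition — is the one genuinely new ingredient; everything else is a rerun of arguments already in place.
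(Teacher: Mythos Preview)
Your route is not the paper's. The paper derives this corollary in one line from Theorem~\ref{maintheorem} itself --- the negative-Sobolev oscillatory integral bound --- by invoking the reduction in \S18 of \cite{triosc}, which shows in the three-summand case how a sublevel inequality of this shape is a formal consequence of the corresponding inequality for $\scriptt$. You never touch Theorem~\ref{maintheorem}; instead you attempt to reprove the sublevel bound from scratch by reducing to Theorem~\ref{thm:sublevel}, essentially rerunning the internals of the proof of the main theorem in the sublevel setting.

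The obstacle you flag is genuine and your proposal does not close it. In Proposition~\ref{prop:sharp} the fourth index is eliminated from the \emph{vector} stationarity condition \eqref{stationarity4}, which already has two scalar components; taking the $V_4$-component merely discards one. Here you start from a single scalar inequality $|\sum_j f_j\circ\varphi_j|<\eps$, and eliminating $f_4$ requires a difference in the $V_4$ direction. Differencing at scale $s$ does not land you in the $\Phi_j(x,s)=(\varphi_j(x),sD\varphi_j(x))$ framework of \S\ref{section:functionalequation}: the quantity $f_j(\varphi_j(x_1,x_2+s))-f_j(\varphi_j(x_1,x_2))$ is a function of $(\varphi_j(x),\,\varphi_j(x_1,x_2+s)-\varphi_j(x))$, and identifying the second coordinate with $sD\varphi_j(x)$ costs a Lipschitz bound on $f_j$ that you do not have. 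More seriously, the endgame you invoke --- Lemma~\ref{lemma:sigmatriform} and \S\ref{section:conclusionofproof} --- is an oscillatory-integral argument that hinges on the Fourier-coefficient lower bound \eqref{extrainfo} for $\scriptd_s f_1$; that bound arises from the $\flat/\sharp$ decomposition and has no analogue in the bare sublevel problem, so there is nothing here to feed into Lemma~\ref{lemma:sigmatriform}. The paper's route sidesteps all of this by using the already-proved Theorem~\ref{maintheorem} as a black box.
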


This follows from Theorem~\ref{maintheorem} by the same reasoning
as developed for the case of three summands in \S18 of \cite{triosc}.

%\begin{equation}
%\end{equation}

\end{document}